\documentclass[11pt,reqno]{amsart}
\usepackage[normalem]{ulem}
\usepackage[margin=1in,letterpaper]{geometry}
\usepackage{graphicx}
\usepackage{amssymb}
\usepackage{amsthm}
\usepackage{mathrsfs}
\usepackage{stmaryrd}
\usepackage{accents} 
\usepackage{enumitem} 
\usepackage{subcaption}
\usepackage{microtype}
\usepackage{colonequals}
\usepackage{textcomp} 
\usepackage{gensymb}
\usepackage{color}
\usepackage{lmodern}

\usepackage[bookmarksopen,bookmarksdepth=2]{hyperref} 
\allowdisplaybreaks%

\makeatletter\let\over\@@over\makeatother

\numberwithin{equation}{section}
\theoremstyle{plain} 
\newtheorem{theorem}{Theorem}[section]

\newtheorem{lemma}[theorem]{Lemma} 
\theoremstyle{remark}
\newtheorem{remark}[theorem]{Remark}
\theoremstyle{definition}

\newtheorem{assumption}{Assumption}

\newcommand{\be}{\begin{equation}}
\newcommand{\ee}{\end{equation}}%
\newcommand{\bse}{\begin{subequations}}
\newcommand{\ese}{\end{subequations}}

\newcommand{\realpart}{\operatorname{Re}}
\newcommand{\imagpart}{\operatorname{Im}}

\newcommand{\sech}{\operatorname{sech}}

\newcommand{\id}{\operatorname{id}}

\newcommand{\p}{\partial}
\newcommand{\R}{\mathbb{R}} 
\newcommand{\F}{\mathscr{F}}

\newcommand{\placeholder}{\,\cdot\,}

\newcommand{\n}[2][]{#1\lVert #2 #1\rVert}
\newcommand{\abs}[2][]{#1\lvert #2 #1\rvert}

\newcommand{\bdd}{\mathrm{b}}       
\newcommand{\loc}{{\mathrm{loc}} } 
\newcommand{\even}{{\mathrm{e}} }

\newcommand{\C}{\mathbb{C}}
\newcommand{\tr}{\mathrm{tr\,}}
\newcommand{\scrW}{\mathscr{W}}
\newcommand{\scrX}{\mathscr{X}}
\newcommand{\bfu}{\mathbf{u}}
\newcommand{\frakb}{\mathfrak{b}}
\newcommand{\fraka}{\mathfrak{a}}
\newcommand{\Vaug}{\mathcal V^{\mathrm{aug}}}
\newcommand{\zun}{\underline{\zeta_1}}
\newcommand{\bfxi}{\boldsymbol{\xi}}
\newcommand{\bfz}{\mathbf{z}}
\newcommand{\bfv}{\mathbf{v}}

 \newcommand\per{\textup{per}}

\newcommand{\nbhdO}{\mathcal{O}}
\newcommand{\tube}{\mathcal{U}}

\newcommand\fluidD{\mathscr{D}}
\newcommand\fluidS{\mathscr{S}}
\newcommand{\fluidB}{\mathscr{B}}
\newcommand{\fluidV}{\mathscr{V}}

\newcommand{\Dom}[1]{\operatorname{Dom}{#1}}
\newcommand{\Rng}[1]{\operatorname{Rng}{#1}}

\newcommand{\cinterval}{\mathscr{I}}

\newcommand{\Lin}{\mathrm{Lin}}
\newcommand{\eng}{E}
\newcommand{\Hc}{H_c}
\newcommand{\augHam}{E_c}
\newcommand{\mom}{P}

\newcommand{\spectrum}[1]{\operatorname{spec}{#1}}

\newcommand{\xiofx}{\underline{\xi}}

\newcommand{\Xspace}{\mathbb{X}}

\newcommand{\Wspace}{\mathbb{W}}
\newcommand{\Vspace}{\mathbb{V}}

\newcommand{\jbracket}[1]{\left\langle{#1}\right\rangle}

\newcommand{\sigmas}{\sigma_{\mathrm{s}}}
\newcommand{\sigmav}{\sigma_{\mathrm{v}}}

\newcommand{\Bs}{B_{\mathrm{s}}}
\newcommand{\Bv}{B_{\mathrm{v}}}
\newcommand{\Gammav}{\Gamma_{\mathrm{v}}}
\newcommand{\Gammas}{\Gamma_{\mathrm{s}}}
\newcommand{\fs}{f^{\mathrm{s}}}
\newcommand{\ws}{w^{\mathrm{s}}}
\newcommand{\Ws}{W_{\mathrm{s}}}
\newcommand{\Ks}{K^{\mathrm{s}}}
\newcommand{\Kv}{K^{\mathrm{v}}}
\newcommand{\fv}{f^{\mathrm{v}}}
\newcommand{\imagimag}{\mathrm{ii}}
\newcommand{\realimag}{\mathrm{ri}}

\newcommand{\cm}{\mathscr{M}}
\newcommand{\cmconf}{\mathfrak{M}}
\newcommand{\km}{\mathscr{K}}

\usepackage{tikz}
\usepackage{pgfplots}
\usetikzlibrary{decorations.markings}
\usepgfplotslibrary{colormaps,fillbetween}
\pgfplotsset{compat=1.10}
\definecolor{owlgray}{RGB}{220,225,220}

\begin{document}

\title[Capillary-gravity wave-borne vortices]{Existence and stability of capillary-gravity wave-borne vortices}

\date{\today}

\author[G. Slease]{Gregory Slease}
\address{Department of Sciences and Mathematics, University of Washington Tacoma, Tacoma, WA 98402}
\email{gslease@uw.edu}

\author[S. Walsh]{Samuel Walsh}
\address{Department of Mathematics, University of Missouri, Columbia, MO 65211} 
\email{walshsa@missouri.edu} 

\author[R. Zhong]{Runzhang Zhong}
\address{Department of Mathematics, University of Missouri, Columbia, MO 65211} 
\email{rz47r@missouri.edu}

\begin{abstract}
In this paper, we consider the existence and stability of waves progressing through a finite-depth two-dimensional body of water that carry a vortex in their bulk. The waves are acted upon by gravity, and sit below a region of air at constant pressure, with the air--water interface being a free boundary along which capillary effects are incorporated. We consider two classes of vortices: point vortices, where formally the vorticity is a Dirac $\delta$, and hollow vortices, where the vortex core is a region of constant pressure outside the fluid domain and about which there is a nonzero circulation.  

First, we prove that steady small-amplitude wave-borne point vortices exist for any subcritical Froude number and supercritical Bond number, and then show they are conditionally orbitally stable. Second, through a vortex desingularization argument, we construct capillary-gravity wave-borne hollow vortices. Notably, the wave speed is $O(1)$ for both these families.
\end{abstract}

\maketitle

\setcounter{tocdepth}{1}
\tableofcontents

\section{Introduction}

A \emph{vortex} is a region of highly concentrated vorticity within a fluid. At the most singular end are  \emph{point vortices}, where formally the vorticity consists of a Dirac $\delta$ measure supported on a discrete set (the vortex centers). This is not a weak solution of the Euler equations, but rather of a model equation that dates back to the work of Helmholtz and Kirchhoff. More regular examples include \emph{vortex patches}, which are solutions to the Euler equations for which the vorticity is supported on a compact domain. \emph{Hollow vortices} are a less common variety that have recently enjoyed renewed interest. They are (classical) solutions of the free boundary Euler equations having bubbles of constant pressure suspended in the bulk of an irrotational fluid. The vorticity is supported on the free boundary in the sense that there is a nonzero circulation about each bubble, making hollow vortices more regular than point vortices but less regular than patches.

Our interest is in vortices that are carried along with traveling water waves. The existence and stability of steady water waves is a classical topic with over two centuries of mathematical literature devoted to it. It was only approximately $20$ years ago that significant progress was made in constructing rotational water waves. Initially, these works only allowed for non-localized vorticity that does not even decay in the far field. More recently, though, various types of localized vorticity waves have been studied, including wave-borne point vortices~\cite{chen2025vortex,C1,C2,CDI,CK,le2019existence,shatah2013travelling,varholm2016solitary}, hollow vortices~\cite{chen2025vortex,crowdy2026exact}, vortex patches~\cite{shatah2013travelling}, and vortex spikes~\cite{ehrnstrom2023smooth}. For a survey of these results, see~\cite[Section 6]{HHSTWWW}.  Broadly speaking, the problem has been considered either in regimes in which the influence of capillarity far outweighs that of gravity, capillarity is absent, or both capillarity and gravity are absent. Moreover, these treatments deal primarily with slow moving waves for which the wave speed is perturbative. It is also noteworthy that the stability of these waves, even in the linear sense, is largely unknown. To the best of our knowledge, the only exceptions to this are the works of Varholm, Wahlén, and Walsh~\cite{varholm2020stability}, which establishes the (conditional) orbital stability of sufficiently small-amplitude, slow moving deep water capillary-gravity wave-borne vortices, and Le~\cite{le2019existence}, which showed pairs of vortices in the same regime are orbitally unstable. 

In this paper, we give the first constructions of wave-borne point vortices and hollow vortices with strong surface tension but $O(1)$ gravity and wave speed.  Moreover, we prove that sufficiently small-amplitude wave-borne point vortices are conditionally orbitally stable even when the wave speed is not perturbative.

\subsection{Wave-borne vortex problem}

\begin{figure}

\centering
\includegraphics[width=1.0\linewidth]{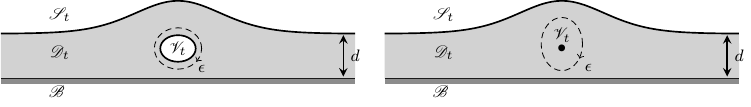}
\caption{The two species of wave-borne vortices we consider. On the left is a wave-borne hollow vortex: the thick shaded curves are the free boundary, which consists of $\partial \fluidV_t \cup \fluidS_t$, the core $\fluidV_t$ and air region are unshaded while the fluid domain $\fluidD_t$ is light gray. The rigid bed is shaded a darker gray.  On the right is a wave-borne point vortex: in this case $\fluidV_t$ is a single point. In both diagrams, a simple contour circling the vortex is depicted in dashed lines, with $\epsilon$ being the circulation of the velocity field about it. These waves are spatially localized in that $\fluidS_t$ is asymptotically flat, with $d$ being the undisturbed depth of the water.}
\label{time-dependent setup figure}
\end{figure}

Let us now formulate the system. Fix Cartesian coordinates $x = (x_1,x_2)$. We consider a body of water modeled as an incompressible, homogeneous, and inviscid fluid occupying a time-dependent domain $\fluidD_t\subset\R^2$ for time $t\geq 0$. The water is bounded below by a rigid horizontal bed $\fluidB \colonequals \{ x_2 = -d \}$, with $d > 0$ being the undisturbed depth. Above the water is air (vacuum) at constant pressure normalized to be $0$. The interface separating the air and water is a curve $\fluidS_t$ that evolves in time. In some cases, we will assume that $\fluidS_t$ can be described globally as the graph of a free surface profile $\eta = \eta(t,x_1)$, but we do not make that restriction yet. We will always consider spatially localized waves for which $\fluidS_t$ limits to the $x_1$-axis as $|x_1| \to \pm\infty$.

Our interest is in water waves carrying vortices. Let $\fluidV_t \subset\subset \fluidD_t^c$ denote a vortex core suspended in the fluid domain and lying below $\fluidS_t$. We study two models. A wave-borne \emph{point vortex} corresponds to the core being a (time-dependent) singleton $\fluidV_t = \{ \mathbf{z}(t)\}$, where $\mathbf{z}(t)$ is called the \emph{vortex center}. A \emph{hollow vortex} corresponds to the core $\fluidV_t$ being the interior of a Jordan curve and the pressure there being spatially constant. For both models, we assume that there is a nonzero circulation $\epsilon$ about $\mathscr{V}_t$. Formally, one can think of the vorticity as a Dirac measure with constant strength supported on $\partial\fluidV_t$.  It is important to note, however, that this set is not actually in the bulk of the fluid. Indeed, $\partial \fluidD_t$ has three disjoint components: $\fluidS_t$, $\partial \fluidV_t$, and $\fluidB$. The \emph{free boundary} is defined to be $\partial \fluidD_t \setminus (\fluidV_t\cup \fluidB)$, which for the point vortex case consists of just the air-water interface $\fluidS_t$ but for hollow vortices also includes the core boundary $\partial \fluidV_t$. This configuration is illustrated in Figure~\ref{time-dependent setup figure}.

In the bulk of the fluid, we impose the 2D incompressible Euler equations. The conservation of momentum and incompressibility take the form 
\bse
\label{intro general}
\begin{equation}\label{eq:2D-Incompressible-Euler}
\left\{
\begin{aligned}
  \p_t \mathbf{v} + \nabla \cdot \left( \mathbf{v} \otimes \mathbf{v} \right)  + \nabla p + g \mathbf{e}_2&= 0 \\
  \nabla\cdot \mathbf{v} &= 0
\end{aligned}
\right.
	\qquad \text{in } \fluidD_t,
\end{equation}
where $\mathbf{v}=\mathbf{v}(t,\placeholder) \colon \overline{\fluidD_t} \setminus \fluidV_t  \to\R^2$ is the velocity, $p=p(t,\placeholder) \colon \overline{\fluidD_t} \setminus \fluidV_t \to\R$ is the fluid pressure, and $g > 0$ is the constant gravitational acceleration. As discussed above, we assume the flow is irrotational, but there is a nonzero circulation $\epsilon$ about the vortex core:
\be
\label{intro vorticity assumptions}
	\nabla^\perp \cdot \mathbf{v} = 0 \quad \textrm{in } \fluidD_t, \qquad \int_{\mathcal{C}_t} \mathbf{v} \cdot \mathbf{t} \, dS = \epsilon 
\ee
where $\mathcal{C}_t \subset\subset \fluidD_t$ is any smooth Jordan curve (oriented counter-clockwise) whose interior contains $\fluidV_t$, and $\mathbf{t}$ is the unit tangent vector.

On the boundary, we impose the two conditions. The \emph{kinematic condition} is that the free boundary and bed are material lines, meaning that the vector field
\be
\label{intro general kinematic}
	\partial_t + \mathbf{v} \cdot \nabla \quad \textrm{is tangent to } \bigcup_{t \geq 0} \{t\} \times \left(  \partial \fluidD_t \setminus \fluidV_t \right).
\ee
Note that on the bed, this requirement enforces impermeability. Second, on each component of the free boundary, we impose the \emph{dynamic condition}. Since the air is at constant pressure $0$, the Young--Laplace law implies that the trace of $p$ is proportional to the signed curvature $\kappa$ of the air-water interface:  
\be
\label{intro general dynamic}
	p = \sigmas \kappa \qquad \textrm{on } \fluidS_t.
\ee
Here, $\sigmas \geq 0$ is the coefficient of surface tension on $\fluidS_t$. Likewise, in the hollow vortex case, the dynamic condition also applies on the core boundary: 
\be
\label{intro general dynamic core}
	p - p_{\mathrm{v}}(t) = \sigmav \kappa \qquad \textrm{on } \partial \fluidV_t,
\ee
where $\sigmav \geq 0$ is the coefficient of surface tension on $\partial \fluidV_t$, and $p_{\mathrm{v}}(t)$ is the spatially constant pressure inside the core. There are multiple choices for prescribing $p_{\mathrm{v}}(t)$. For example, if we imagine the core as a second compressible phase obeying the ideal gas law, then $p_{\mathrm{v}}(t)$ is inversely proportional to $|\fluidV_t|$. This is related to the so-called adiabatic approximation; see~\cite{benjamin1987hamiltonian,prosperetti1991bubbles,chen2026finite}. Alternatively, one can treat $p_{\mathrm{v}}(t)$ as external forcing. For the steady wave solutions we construct, the pressure will be constant in both space and time, which is consistent with either model.

The above conditions give a closed system for the hollow vortex model, but for a wave-borne point vortex, one must supplement them with an equation governing the motion of the vortex center. The physical intuition, which goes back to Kirchhoff and Helmholtz, is that the center should not self-advect. Thus, we must remove its singular contribution to the velocity field, and use the resulting (smooth) vector field. In other words, the vector field that drives the point vortex should be only the irrotational part of the velocity, that is
\begin{equation}\label{eq:Helmholtz-Kirchhoff}
	\tag{\ref{intro general dynamic core}'}
    \frac{d \mathbf{z}}{dt} = \left( \mathbf{v}-\frac{\epsilon}{2\pi}\nabla^{\perp} (\log{\abs{\placeholder-\mathbf{z}(t)})} \right) \Big|_{\mathbf{z}(t)}.
\end{equation}
\ese
In what follows, the time-dependent wave-borne hollow vortex problem refers to the equations~\eqref{eq:2D-Incompressible-Euler}--\eqref{intro general dynamic core}. On the other hand, the time-dependent wave-borne point vortex problem corresponds to~\eqref{eq:2D-Incompressible-Euler}--\eqref{intro general dynamic} along with the Helmholtz--Kirchhoff equation for the vortex dynamics~\eqref{eq:Helmholtz-Kirchhoff}. For simplicity, we call~\eqref{intro general} the (time-dependent) wave-borne vortex problem.

\begin{remark}
\label{well-posedness remark}
The Cauchy problem for gravity and capillary-gravity water waves has been studied extensively with the assumption that $\fluidV_t = \emptyset$,  $\fluidD_t$ is simply connected, and $\nabla^\perp \cdot \mathbf{v}$ is sufficiently smooth (or just vanishes identically); see, for example~\cite[Chapter 1]{lannes2013book} for an overview. Much of this theory carries to the wave-borne vortex problem~\eqref{intro general}, but regrettably the literature treating it directly is quite sparse. 

For a wave-borne hollow vortex, though the topology of the fluid domain is more complicated, the a priori estimates of Shatah and Zeng~\cite{shatah2008geometry,shatah2011interface} imply the local well-posedness of the Cauchy problem in sufficiently high regularity Sobolev spaces provided the \emph{Rayleigh--Taylor sign condition} is satisfied. That is, one must have that
\be
\label{rayleigh-taylor sign condition}
	-\mathbf{n} \cdot \nabla p \geq m > 0 \qquad \textrm{on } \partial\fluidD_t \setminus \fluidV_t,
\ee  
for some constant $m > 0$ where $\mathbf{n}$ is the outward unit normal vector to $\fluidD_t$ along the free boundary. See also the discussion of the planar hollow vortex problem in~\cite[Appendix B]{chen2026desingularization}. Local well-posedness for the wave-borne point vortex problem likewise follows under the same condition, as the vortex dynamics equation~\eqref{eq:Helmholtz-Kirchhoff} is simply an ODE and the influence of the vortex on $\fluidS_t$ can be treated as external forcing. 

Long-time well-posedness is of course far more subtle even for the irrotational case. Su~\cite{su2020longtime} obtained long-time well-posedness for gravity wave-borne point vortices in infinite-depth water again under the assumption~\eqref{rayleigh-taylor sign condition}, and Wan~\cite{wan2025gravitycapillary} recently showed long-time well-posedness for the capillary-gravity case. On the other hand, Su~\cite{su2023transition} found initial data such that, in finite time, the point vortex comes close enough to $\fluidS_t$ that the Rayleigh--Taylor sign condition is violated, at which point the system becomes ill-posed. 
\end{remark}

\subsection{Traveling wave-borne point vortices}\label{sec:intro-point-vortex}

 A \textit{steady} or \textit{traveling} wave-borne vortex  is a solution to~\eqref{intro general} that evolves in time by translating at constant velocity $c \in \mathbb{R}$ without changing shape. Concretely, this means that the fluid domain, air-water interface, and vortex core at time $t$ have the form
 \[
 	\fluidD_t = \left\{  (x_1+c t, x_2) : x \in  \fluidD_0 \right\}, \quad 
	\fluidS_t = \left\{  (x_1 + ct, x_2) : x  \in \fluidS_0 \right\}, \quad 
	\fluidV_t = \left\{  (x_1 + ct, x_2) : x  \in \fluidV_0 \right\}.
\]
Making the change of independent variables $x \to (x_1-ct, x_2)$, the wave-borne vortex problem~\eqref{intro general} can be recast as time-independent PDE for the relative velocity field $\mathbf{u} = (u_1, u_2) \colon \overline{\fluidD_0} \setminus \fluidV_0 \to \mathbb{R}^2$, which is related to the time-dependent velocity field in the lab frame via
\[
	\mathbf{v}(t,x) \equalscolon \mathbf{u}(x_1 -ct, x_2) + c\mathbf{e}_1.
\]

It is also convenient at this stage to nondimensionalize the system, taking $d$ as the characteristic length scale and $|c|$ as the characteristic velocity. We will write $\fluidD$, $\fluidS$, and $\fluidV$ for the fluid domain, air-water interface, and vortex core boundary in the nondimensional variables (in the moving frame), and we denote by $\gamma$ the nondimensionalized vortex strength. But, otherwise, we will use the same notation for the dimensional and dimensionless quantities unless explicitly stated.  In particular, $\fluidD$ is the region lying below $\fluidS$, above the bed $\fluidB = \{ x_2 = -1\}$, and in the exterior of $\fluidV$. Note that $\mathbf{u} \to (-1,0)$ as $|x_1| \to \infty$, meaning that the far-field relative velocity is $O(1)$. 

This process introduces several nondimensional parameters that strongly influence the qualitative property of the system: the \textit{Froude number} $F$ and the \textit{Bond numbers} $\Bs$ and $\Bv$ associated to air-water interface and core boundary (if it is nonempty), respectively. These are defined by 
\[
	F^2 = \frac{c^2}{gd},  \qquad \Bs \colonequals  \frac{\sigmas}{dc^2}, \qquad \Bv \colonequals \frac{\sigmav}{dc^2}.
\]
One can think of $F$ as a nondimensionalized wave speed, while the Bond numbers are nondimensionalized measures of the capillarity on the corresponding free boundary components. Based on the dispersion relation for the water wave problem (without a submerged vortex), we call $F = 1$ and $\Bs = 1/3$ the critical Froude number and (surface) Bond numbers.  For our constructions, the size of $\Bv$ is unimportant except for the case $\Bv = 0$, which would require a different scaling argument as we discuss further below.

Moving to the nondimensionalized variables in the translating frame, the resulting system is the steady incompressible Euler equations
\bse
\label{intro steady Euler}
\begin{equation}\label{eq:2D-steady-Incompressible-Euler}
\left\{
\begin{aligned}
  \nabla \cdot \left( \mathbf{u} \otimes \mathbf{u} \right)  + \nabla p + \frac{1}{F^2} \mathbf{e}_2&= 0 \\
  \nabla\cdot \mathbf{u} &= 0
\end{aligned}
\right.
	\qquad \text{in } \fluidD,
\end{equation}
together with the localized vorticity assumption
\be
\label{intro steady vorticity assumptions}
	\nabla^\perp \cdot \mathbf{u} = 0 \quad \textrm{in } \fluidD, \qquad \int_{\mathcal{C}} \mathbf{u} \cdot \mathbf{t} \, dS = \gamma.
\ee
The kinematic boundary condition now takes the form
\be
\label{intro steady kinematic condition}
	\mathbf{u} \quad \textrm{is tangent to} \quad \partial \fluidD \setminus \fluidV,
\ee
while the \emph{dynamic} or \emph{Bernoulli boundary condition} on the air-water interface is
\be
\label{intro steady dynamic condition surface}
	\Bs \kappa + \frac{1}{2}|\mathbf{u}|^2 + \frac{1}{F^2} x_2 = \frac{1}{2} \quad\text{on}\quad \fluidS.\\
\ee
Note that here we have used Bernoulli's law to eliminate the pressure in~\eqref{intro steady dynamic condition surface}. The constant $1/2$ on the right hand-side is found by considering the far-field limit. For the wave-borne hollow vortex problem, the dynamic condition on the vortex core similarly can be written
\be
\label{intro steady dynamic condition core}
	\Bv \kappa + \frac{1}{2}|\mathbf{u}|^2 + \frac{1}{F^2} x_2 = q \quad\text{on}\quad \partial\fluidV,
\ee
where $q$ is the so-called \emph{Bernoulli constant}. On the other hand, in the case of a wave-borne point vortex, the Helmholtz--Kirchhoff model states~\eqref{eq:Helmholtz-Kirchhoff} is equivalent to
\be
\label{intro steady KH equation}
\tag{\ref{intro steady dynamic condition core}'}
	0 = \left( \mathbf{u} - \frac{\gamma}{2\pi} \nabla^\perp \log{|\placeholder - \mathbf{z}|} \right)\big|_{\mathbf{z}}.
\ee
\ese
This expresses the fact that a point vortex carried by the wave will be stationary in the moving frame.

 We will refer to~\eqref{intro steady Euler} as the \emph{steady wave-borne vortex problem}. Again, the understanding is that in the context of hollow vortices, this corresponds to \eqref{eq:2D-steady-Incompressible-Euler}--\eqref{intro steady dynamic condition core}, while for wave-borne point vortices, we mean \eqref{eq:2D-steady-Incompressible-Euler}--\eqref{intro steady dynamic condition surface} together with the vortex dynamic equation~\eqref{intro steady KH equation}. When $\Bs > 0$, we say these are capillary-gravity wave-borne vortices, whereas if $\Bs = 0$, they are called gravity wave-borne vortices.

\subsection{Statement of results}
\label{intro statement of results section}

In this paper, we consider both the existence of steady wave-borne vortices and their (nonlinear) stability. We give a preliminary statement of results here, as the rigorous versions are best phrased after making a further change of variables. 

First, we show that there exists a family of small-amplitude but fast moving capillary-gravity wave-borne point vortices. 

{\begin{theorem}[Wave-borne point vortices]
\label{intro existence point vortex theorem}
Fix $k \geq 2$ and $\alpha\in(0,1)$. For any supercritical surface Bond number $\Bs^0 > \frac13$ and any subcritical Froude number $F_0 \in (0,1)$, there exists a three-parameter family $\cm_\loc$ of solitary capillary-gravity water waves with a submerged point vortex solving~\eqref{intro steady Euler} and satisfying the following. 
\begin{enumerate}[label=\rm(\alph*)]
	\item $\cm_\loc$ admits the real-analytic parameterization 
	\[
		\cm_\loc = \left\{ (\mathbf{u}^{\beta,F,\Bs}, \, \mathbf{z}^{\beta,F,\Bs}, \, \gamma^{\beta,F,\Bs}, \, \fluidS^{\beta,F,\Bs}) : \quad  |\beta| < \beta_0, ~ |F-F_0| + |\Bs - \Bs^0| < \delta \right\}
	\]
	for some $\beta_0, \delta > 0$. Here $\beta$ is, to leading order, the height of the vortex center above the bed.
	
	\item Each $\fluidS^{\beta,F,\Bs}$ is globally the graph of a function $\eta^{\beta,F,\Bs} \in C^{k+\alpha}(\mathbb{R}) \cap H^{k}(\mathbb{R})$, and we have the asymptotics
	\be
	\label{intro pv asymptotics}
	\begin{aligned}
		\eta^{\beta,F,\Bs} & = O(\beta^2) \quad \textrm{in } C^{k+\alpha}(\mathbb{R}) \cap H^{k}(\mathbb{R}), \\
		\mathbf{z}^{\beta,F,\Bs} & = \left( 0, ~  -1 + \beta + O(\beta^3) \right),  \\
		\gamma^{\beta,F,\Bs} &= 4\pi\beta + \frac43\pi^3\beta^3 + O(\beta^5).
	\end{aligned}
	\ee
	\item $\cm_\loc$ includes every solution in a neighborhood of the trivial solution $(-\mathbf{e}_1, -\mathbf{e}_2, 0, \{x_2 = 0\})$ that is symmetric with respect to the $x_2$-axis.
\end{enumerate}
\end{theorem}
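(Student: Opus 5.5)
The plan is to reformulate \eqref{intro steady Euler} for a point vortex as a single nonlocal equation for the surface profile $\eta$, and then apply the implicit function theorem in $(\eta,\beta,F,\Bs)$ near $(0,0,F_0,\Bs^0)$. First I would decompose the relative velocity as $\mathbf{u} = -\mathbf{e}_1 + \nabla^\perp\Psi_{\mathrm{v}} + \nabla\Phi$. Here $\Psi_{\mathrm{v}}$ is the stream function of a point vortex of strength $\gamma$ at $\mathbf{z}$ in the flat strip $\{-1<x_2<0\}$, vanishing on $x_2=-1$. I would build it by the method of images, including the reflection across the bed. The remainder $\Phi$ is harmonic in $\fluidD$ and fixes the kinematic condition on $\fluidS$. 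By symmetry I restrict to $\eta$ even and $\mathbf{z}=(0,z_2)$. The Helmholtz--Kirchhoff equation \eqref{intro steady KH equation} then reduces to a single scalar condition: the horizontal irrotational velocity at $\mathbf{z}$ vanishes.

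For the trivial surface, the bed image gives a self-induced velocity $\gamma/(4\pi(z_2+1))$. Balancing it against the background $-1$ and the top image's contribution yields the relation between $\gamma$ and the height $\beta\approx z_2+1$. Expanding the image series gives $\gamma = 4\pi\beta + \tfrac43\pi^3\beta^3+\cdots$; this is the $\pi/\tan$-type correction coming from the strip Green's function, $\gamma = 4\pi\beta\cdot\pi\beta/\tan(\pi\beta)\cdots$ to leading orders. I would therefore use $\beta$ as the parameter and treat $\gamma$ and $z_2$ as unknowns determined by the vortex equation. The vortex equation is $O(\beta)$-singular, so I would rescale it, dividing by $\gamma$ or writing $\gamma = 4\pi\beta(1+\tilde\gamma)$, to get a regular equation whose linearization in $\tilde\gamma$ at $\beta=0$ is invertible.

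Next I would flatten the domain, using either the Dirichlet--Neumann operator $G(\eta)$ or a conformal/flattening map. The Bernoulli condition \eqref{intro steady dynamic condition surface} then becomes
\[
\mathcal{F}(\eta,\beta,F,\Bs) \colonequals -\Bs\,\partial_1\!\Big(\frac{\eta'}{\sqrt{1+\eta'^2}}\Big) + \frac{1}{F^2}\eta + \tfrac12|\mathbf{u}|^2 - \tfrac12 = 0,
\]
posed on even $\eta\in C^{k+2+\alpha}\cap H^{k+2}$, with values in $C^{k+\alpha}\cap H^{k}$. Analyticity in all arguments follows from analyticity of $G(\eta)$ and of the explicit vortex contribution, which is smooth near $\fluidS$ because $\mathbf{z}$ stays near the bed. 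The linearization at $\eta=0,\beta=0$ is the Fourier multiplier $\Bs\xi^2 + F^{-2} - \xi\coth\xi$, together with the matching term from the bed. This is the classical capillary-gravity symbol $m(\xi)$. Since $\xi\coth\xi\le 1+\xi^2/3$, we get $m(\xi)\ge (\Bs-\tfrac13)\xi^2 + (F^{-2}-1) >0$ when $\Bs>1/3$ and $F<1$. Hence $m$ is bounded below by $c(1+\xi^2)$, and the linearization is an isomorphism $H^{k+2}\to H^k$ and, with Hölder multiplier estimates, $C^{k+2+\alpha}\to C^{k+\alpha}$ on the localized spaces. This is exactly why the regime is supercritical in $\Bs$ and subcritical in $F$.

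The implicit function theorem then gives the analytic family with $(\eta,\tilde\gamma,z_2)$ depending on $(\beta,F,\Bs)$. The forcing from the vortex on $\fluidS$ is $O(\gamma\beta)=O(\beta^2)$, since a vortex dipole (vortex plus bed image) at distance $O(1)$ decays like $\gamma\beta$, so $\eta=O(\beta^2)$. The $O(\beta^3)$ correction to $z_2$ and the expansion of $\gamma$ follow from parity in $\beta$: the map $\beta\mapsto-\beta$ with $\gamma\mapsto-\gamma$ is a reflection symmetry of the reduced problem. Uniqueness in part (c) is the IFT local uniqueness among symmetric solutions. The main obstacle is the singular dependence as $\beta\to0$, where the vortex collides with its image in the bed. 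The key is to choose the rescaled unknowns so that the vortex equation and the surface forcing are analytic up to $\beta=0$, and I would verify this carefully using the explicit strip Green's function.
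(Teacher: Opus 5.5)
There is a genuine gap: the definition of $\beta$ is left inconsistent, and with the natural choice part (b) fails. The existence mechanism itself is sound. It mirrors the paper's argument in physical rather than conformal variables: the symbol $\Bs\xi^2+F^{-2}-\xi\coth\xi$ is coercive because $\xi\coth\xi\le 1+\xi^2/3$, and your rescaling $\gamma=4\pi\beta(1+\tilde\gamma)$ plays the role of the paper's explicit $\gamma=4\sin(\pi\beta)/(\cos(\pi\beta)-\varkappa\sin(\pi\beta))$. In both cases the linearization is block triangular.

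You take $\beta\approx z_2+1$ as a parameter and list $(\eta,\tilde\gamma,z_2)$ as unknowns. Your formulation has only two equations, Bernoulli and the scalar horizontal Helmholtz--Kirchhoff condition, so $z_2$ cannot be a separate unknown. Intrinsically you must set $\beta=z_2+1$.

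With that choice the vortex equation reads $-1+\frac{\gamma}{4}\cot(\pi\beta)+\partial_{x_1}\Phi(\mathbf z)=0$, i.e.\ $\gamma=4\tan(\pi\beta)\bigl(1-\partial_{x_1}\Phi(\mathbf z)\bigr)$. Here $\partial_{x_1}\Phi(\mathbf z)$ is the horizontal velocity induced at $\mathbf z$ by the $O(\beta^2)$ surface displacement. It is $O(\beta^2)$ with a nonzero coefficient, so it shifts the $\beta^3$ coefficient of $\gamma$.

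Concretely, combine the paper's expansions for $b$ and $\gamma$ and write them in terms of the true height $h=z_2+1$. This gives $\gamma=4\pi h+\bigl(\tfrac43\pi^3-2\pi\,\partial_{\zeta_2}\ddot w^{F,\Bs}(0)\bigr)h^3+O(h^5)$. Solving the $\ddot w$ problem by Fourier transform shows $\partial_{\zeta_2}\ddot w^{F,\Bs}(0)<0$, since $m>0$ and the $\sech^2$ forcing has positive transform. Parity only kills even powers. The value $\tfrac43\pi^3$ you read off from the flat-surface image sum is therefore not what your parametrization produces.

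The paper's $\beta$ is the conformal altitude, $\mathbf z=f(i\beta)$, and that is what yields (b). In \eqref{eq:W-Laurent-expansion} the factor $1/\partial_\zeta f(i\beta)$ multiplies the whole constant term. So the $O(\beta^2)$ change of $\partial_\zeta f$, the conformal counterpart of your $\partial_{x_1}\Phi(\mathbf z)$, drops out, and only $\gamma\,\partial_{\zeta_1}^2 w(i\beta)$ remains. Oddness of $w$ in $\zeta_2$ gives $\varkappa=O(\beta^3)$, hence $\gamma=4\tan(\pi\beta)+O(\beta^5)$. Likewise $z_2+1-\beta=w(i\beta)=O(\beta^3)$.

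To repair your route, either adopt that parameter, or run your physical IFT in $h$ and then reparametrize. Define $\beta$ through $4\tan(\pi\beta)=\gamma$; this is an analytic odd change of variables, since $\gamma$ is odd in $h$ with $\partial_h\gamma(0)=4\pi$. Oddness then gives $z_2=-1+\beta+O(\beta^3)$.

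Your heuristic factor is also inverted. The flat balance $\frac{\gamma}{4}\cot(\pi\beta)=1$ gives $\gamma=4\tan(\pi\beta)$, i.e.\ the factor $\tan(\pi\beta)/(\pi\beta)$. The factor $\pi\beta/\tan(\pi\beta)$ you wrote would produce $-\tfrac43\pi^3$.
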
}

We prove this theorem via an implicit function theorem argument. As noted above, the main novelty is that the wave speed is $O(1)$ --- or, equivalently given the nondimensionalization, the gravity is $O(1)$. Previously, Varholm~\cite{varholm2016solitary} constructed families of solitary and periodic capillary-gravity wave-borne vortices for which both the circulation and wave speed were $O(\beta)$, essentially. An analogous ``fast moving'' gravity wave-borne point vortex existence theory was given by Chen, Varholm, Walsh, and Wheeler~\cite{chen2025vortex}. Theorem~\ref{intro existence point vortex theorem} adapts the approach of that paper to incorporate surface tension. It is possible to use (analytic) global bifurcation techniques to extend $\cm_\loc$ to a much larger family. However, as usual, surface tension on $\fluidS$ means that in general one cannot hope that these solutions will exhibit any monotonicity properties, which are essential to obtaining a satisfying characterization of the limiting behavior. For instance, it is not possible to rule out the possibility that a global solution curve is a closed loop under these assumptions. For that reason, we do not pursue a global continuation argument here.

The next theorem establishes the existence of steady wave-borne hollow vortices. This is a considerably more delicate construction than wave-borne point vortices. We allow for either strong surface tension or no surface tension on the air-sea interface; on the vortex boundary, we are able to treat any $\Bv > 0$.
\begin{theorem}[Wave-borne hollow vortices]
\label{intro existence hollow vortex theorem}
Fix $k \geq 2$, $\alpha \in (0,1)$, and $\lambda > 1$. Assume that either 
\[
	\Bs > \tfrac{1}{3} \textrm{ and } F^2 < 1  \qquad \textrm{or} \qquad \Bs = 0 \textrm{ and } F^2 > 1.
\]
For any $\Bv > 0$, there exists a curve $\km_\loc$ of solitary wave-borne hollow vortices solving~\eqref{intro steady Euler} and satisfying the following. 
\begin{enumerate}[label=\rm(\alph*\rm)]
	\item \label{intro hollow curve part} The curve $\km_\loc$ admits a real-analytic parameterization
\[
	\km_\loc = \left\{ \left( \mathbf{u}^\rho, \, \gamma^\rho, \, q^\rho, \fluidS^\rho, \, \fluidV^\rho \right) : \quad 0 < \lambda \rho \ll 1 \right\},
\]
where $\Bs$, $\Bv$, and $F$ are fixed along the family. 
\item \label{intro hollow asymptotics part} Each $\fluidS^\rho$ is the graph of a $C^{k+\alpha}$ function $\eta^\rho = \eta^\rho(x_1)$, and each vortex core boundary $\partial\fluidV^\rho$ admits the polar graph parameterization
\[
	\partial\fluidV^\rho = \left\{ -\mathbf{e}_2 + \rho \left(  R^\rho(\theta) \cos{\theta}, ~ \lambda +  R^\rho(\theta) \sin{\theta} \right)   : \quad \theta \in [0,2\pi)  \right\},
\]
where $R^\rho = R^\rho(\theta) \in C_{\per}^{k+\alpha}([0,2\pi])$. Moreover, we have the asymptotics
\[
		R^\rho = 1 + O(\rho^2) \quad  \textrm{in } C_\per^{k+\alpha}([0,2\pi]) \qquad 
		\eta^\rho = O(\rho^2) \quad \textrm{in } C^{k+\alpha}(\mathbb{R}),
\]
and
\be
\label{intro hollow gamma q asymptotics}
		\gamma^\rho = O(\rho^2), \quad q^\rho = O(\rho^{-1}).
\ee
\end{enumerate}
\end{theorem}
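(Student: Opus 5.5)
The plan is a vortex desingularization argument in the spirit of the planar hollow vortex constructions, with the wave-borne point vortex of Theorem~\ref{intro existence point vortex theorem} (or its $\Bs=0$, $F^2>1$ analogue) serving as the leading-order profile. We place the core at $-\mathbf{e}_2+\rho\lambda\mathbf{e}_2$, at height $\rho\lambda$ above the bed, with radius $\rho$. The requirement $\lambda>1$ ensures the core stays off the bed. We then write the relative velocity through a complex potential
\[
w = -z + \frac{\gamma}{2\pi i}\bigl(\log(z-z_0) - \log(z-\bar z_0 + 2i)\bigr) + \text{(corrections)},
\]
where the second logarithm comes from reflecting the vortex across the bed to enforce impermeability there. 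The corrections are harmonic perturbations that account for the free surface $\eta$ and the core shape $R$.

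The unknowns are $(\eta, R, \gamma, q)$ together with the harmonic corrections. These are encoded as boundary traces: a Dirichlet-to-Neumann type operator on $\fluidS$, and a single-layer/Fourier representation on the circle $|z-z_0|=\rho$. After the change of variables $z=z_0+\rho\zeta$ near the core, we obtain an operator equation
\[
\mathcal{F}(\eta,R,\gamma,q;\rho)=0.
\]

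The key step is choosing the scaling so that $\mathcal{F}$ extends analytically to $\rho=0$ with an invertible linearization there. In the core variables the flow is a rotating flow of strength $\gamma/(2\pi\rho)$, plus a background of size $O(1)$. The core Bernoulli condition, with curvature $\kappa = \rho^{-1}(\dots)$, reads
\[
\frac{\Bv}{\rho}\,\kappa[R] + \frac{1}{2}\,\frac{\gamma^2}{4\pi^2\rho^2}\,|\dots|^2 + O(1) = q.
\]
Both the capillary and swirl terms must balance at order $\rho^{-1}$. This forces $\gamma^2/\rho^2 \sim \rho^{-1}$, so $\gamma=O(\rho^{1/2})$, unless the swirl is chosen subdominant. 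The stated asymptotics $\gamma=O(\rho^2)$ and $q=O(\rho^{-1})$ suggest the regime where capillarity dominates. There, after multiplying by $\rho$, the leading-order core equation is
\[
\Bv\bigl(\kappa[R]-1\bigr) = \rho q - \Bv + O(\rho).
\]
Its linearization about $R=1$ is $-\Bv(\partial_\theta^2+1)$, acting on the perturbation $R-1$. This is invertible except on the modes $1,\cos\theta,\sin\theta$. The mode $1$ is fixed by $q$ (which also fixes the radius normalization). The translation modes $\cos\theta,\sin\theta$ must be killed by solvability conditions: these are the desingularized Helmholtz--Kirchhoff force balance (horizontal, handled by imposing evenness in $x_1$) and the vertical force balance. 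The vertical balance is where $\gamma$ is determined.

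Here gravity and the image vortex enter. The net vertical force on the core is the buoyancy, of order $F^{-2}\pi\rho^2$. It must be balanced by the Kutta--Joukowski-type lift. That lift is circulation times the relative velocity at the core, which is $-1$ plus the induced velocity from the image and the surface, giving a force of order $\gamma$. Hence $\gamma\sim\rho^2$, consistent with~\eqref{intro hollow gamma q asymptotics}. I would therefore set $\gamma=\rho^2\tilde\gamma$ and solve for $\tilde\gamma$ from the $\sin\theta$ solvability condition, where it appears with a nonzero coefficient.

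On the surface side, the forcing from the vortex is then $O(\gamma)+O(\rho^2)=O(\rho^2)$. The linearized surface operator is the capillary-gravity Dirichlet-to-Neumann operator, with symbol
\[
\frac{1}{F^2} + \Bs\xi^2 - \xi\coth\xi.
\]
Under either hypothesis ($\Bs>1/3$, $F^2<1$, or $\Bs=0$, $F^2>1$) this symbol is bounded away from zero and is an isomorphism on the even parts of $C^{k+\alpha}$, in the same way as in the point vortex theorem. This yields $\eta=O(\rho^2)$.

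Assembling, at $\rho=0$ the linearized operator is block triangular. One block is the surface operator; the other is the core operator $-\Bv(\partial_\theta^2+1)$, augmented by $q$ and $\tilde\gamma$ to fill the cokernel, with the $\cos\theta$ mode removed by symmetry. The analytic implicit function theorem then gives the curve $\km_\loc$ for $0<\lambda\rho\ll1$. The main obstacle I expect is verifying that the coupling terms between surface and core are genuinely of higher order in the chosen scaled spaces, especially the interaction through the domain's harmonic-function solution operator in a doubly connected region. I would handle this first by an explicit conformal/Fourier decomposition: an explicit Laurent series near the core, and the bed-reflected Green's function for the remainder.
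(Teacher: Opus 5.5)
There is a genuine gap in the step where you fix $\gamma\sim\rho^2$ by balancing buoyancy against the Kutta--Joukowski lift. That step assumes the $O(1)$ stream exerts no net vertical force on the core at leading order. This would hold for a core at $O(1)$ height, where the background is smooth on the scale $\rho$. Here, however, the core has radius $\rho$ and sits at height $\lambda\rho$, so its distance to the bed is comparable to its size.

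The stream past the circular core consists of its dipole response together with the image of that dipole in the bed; in the doubled picture, it is uniform flow past two disks centered at $\pm i\lambda\rho$. This makes $\tfrac12|\mathbf u|^2$ on $\partial\fluidV$ an $O(1)$, non-constant function with a nonzero $\sin\theta$ coefficient. This is the classical attraction of a cylinder to a wall in potential flow, a vertical force of size $\asymp\rho$; for $\lambda\gg1$, Blasius' formula gives approximately $\pi\rho/(2\lambda^3)$, directed towards the bed. Buoyancy, by contrast, is only $O(\rho^2)$.

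In your equation multiplied by $\rho$, this force sits in the ``$O(\rho)$'' remainder, while the lift with $\gamma=\rho^2\tilde\gamma$ and the buoyancy first appear at $O(\rho^2)$. So the $\sin\theta$ solvability condition cannot be met. Either that mode lies in the cokernel of the linearization at $\rho=0$, or, if you divide it by $\rho^2$ to make $\tilde\gamma$ visible, the map acquires a $\rho^{-1}$ singularity. The lift must instead balance the wall force, which forces $\gamma=\gamma_1^0\rho+O(\rho^2)$ with $\gamma_1^0\neq0$. Similarly, the modes $m\ge2$ of the same $O(1)$ dynamic pressure force a relative deformation of order $\rho$, so your scheme would give $R=1+O(\rho)$ rather than $1+O(\rho^2)$.

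The paper's proof puts the circulation in the right slot. It works in conformal variables with $f=-i+\mathrm{id}+\fs+\rho^2\mathcal Z^\rho[\mu]$, potential $W^\rho=W_0(\cdot;\rho\gamma_1,\rho\lambda)+\rho\mathcal Z^\rho[\nu]+\rho^2\Ws$, and renormalized constant $Q=q-\Bv/(\rho|1+\partial_\zeta\fs(0)|)$. The unknown $\gamma_1=\gamma/\rho$ then enters the linearized core equation at $\rho=0$ through $-\dot\gamma_1\realpart\theta$, with $\theta=\lambda/(\pi\tau(\tau+2i\lambda))$, whose $P_1$-part is a nonzero multiple of $\sin\theta$. $Q$ controls $P_0$, and $D_\mu\mathscr G_2(0;0)$ is an isomorphism onto $P_{>1}$. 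Apart from the scaling of $\gamma$, and your use of a physical polar graph instead of a conformal layer potential, this is the same block-triangular structure you describe, with the same surface symbol.

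The paper reads the $O(\rho^2)$ rates off from $\gamma_1^0=0$ and $\mu^0=0$, i.e.\ from the assertion $\mathscr G(0;0)=0$. Be aware that this assertion rests on the expansion $a_{\mathrm v}^2=1+O(\gamma_1)+O(\rho^2)$. That expansion drops the $\gamma_1$-independent terms $-2\realpart\mathcal Z^\rho_{\mathrm v}\nu'+|\mathcal Z^\rho_{\mathrm v}\nu'|^2$. These terms do not vanish, because $\nu(\cdot;0,0)\neq0$: it is needed to make $\partial B_\rho(i\lambda\rho)$ a streamline of $-\zeta$. They are precisely the wall force and shape forcing described above.

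So neither your buoyancy heuristic nor the trivial base point justifies $\gamma=O(\rho^2)$ and $R=1+O(\rho^2)$. A consistent argument applies the implicit function theorem at a nontrivial base point $(0,\mu^0,\gamma_1^0,Q^0)$ at $\rho=0$, where invertibility of the linearization has to be re-checked. One should expect those rates to be $O(\rho)$ instead.
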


Like Theorem~\ref{intro existence point vortex theorem}, this result is proved via the implicit function theorem: we imagine injecting a point vortex into the fluid domain through the bed, while simultaneously desingularizing it into a hollow vortex. The parameter $\rho$ is the radius of the vortex core to leading order. Gravity wave-borne hollow vortices ($\Bv = \Bs = 0$ and $F^2 > 1$) were constructed with a thematically similar argument in~\cite{chen2025vortex}, where the desingularization was actually carried out for a (generic) gravity wave-borne point vortex. The same approach, combined with Theorem~\ref{intro existence point vortex theorem}, can be easily adapted to prove the existence of wave-borne hollow vortices with $\Bv =0$ but strong surface tension on the air-sea interface ($\Bs > 1/3$, $F^2 < 1$). However, when surface tension is present on the vortex boundary ($\Bv > 0$), the process requires more finesse as the curvature of $\partial\fluidV^\rho$ is an $O(1/\rho)$ singular term in the dynamic condition~\eqref{intro steady dynamic condition core}. The need to balance this against the (singular) kinetic energy density term $|\mathbf{u}^\rho|^2/2$ necessitates taking either $\gamma$ or $\Bv$ to be small; here, we elect to do the former.  We also mention the recent work of Crowdy~\cite{crowdy2026exact}, where an entirely different approach based on complex function theory is used to construct wave-borne hollow vortices in the absence of gravity.

Once the existence of a traveling wave is known, the natural next question is whether it is stable. Our main result in this direction states roughly that sufficiently small-amplitude capillary-gravity wave-borne point vortices are \emph{conditionally orbitally stable}. Orbital here means roughly that stability is modulo translation in the $x_1$-direction, which is natural given that the system is invariant with respect to this symmetry group. As mentioned above, the wave-borne point vortex problem is not known to be globally well-posed in time, and hence any stability result must be conditional on the existence of a (bounded) solution. 

To state this more precisely, it is best to use a reformulation of the problem~\eqref{intro general} in the style of Zakharov--Craig--Sulem, which results in a nonlocal nonlinear system on a fixed domain. That is, we suppose that $\fluidS_t$ is the graph of a free surface profile $\eta = \eta(t, x_1)$, which is of course true for the solutions given by Theorem~\ref{intro existence point vortex theorem} for $\beta$ sufficiently small. The velocity field in a neighborhood of $\fluidS_t$ can be (uniquely) decomposed as
\be
\label{intro hodge helmholtz}
	\mathbf{v} = \nabla \Phi(t) + \epsilon \nabla \Theta(\placeholder; \, \mathbf{z}(t)),
\ee
where $\Phi = \Phi(t, \placeholder) \colon \overline{\fluidD_t} \to \mathbb{R}$ is harmonic in the interior of $\overline{\fluidD_t}$, and $\Theta$ is an explicit function corresponding to the contribution of the point vortex. As a new unknown, we consider the restriction of the potential $\Phi$ to the free surface:
\[
	\varphi = \varphi(t,x_1) \colonequals \Phi(t, x_1, \eta(t, x_1)).
\]
It is known from essentially the work of Rouhi and Wright~\cite{rouhi1993hamiltonian}, Varholm, Wahlén, and Walsh~\cite{varholm2020stability}, and Varholm~\cite{varholm2016solitary} that the capillary gravity wave-borne point vortex problem~\eqref{intro general} can be written as a Hamiltonian system for the state variables $u\colonequals(\eta, \varphi, \mathbf{z})$:
\begin{equation}\label{eq:general-Hamiltonian-formulation}
	\frac{du}{dt}=J(u)DE(u),
\end{equation} 
with respect to a state-dependent Poisson map $J(u)$, where $E=E(u)$ is the energy functional. There is also a conserved quantity, the horizontal linear momentum $\mom = \mom(u)$, which is generated in an appropriate sense by the translation invariance of the system in the horizontal direction. 

While we postpone many of the details, in order to understand the functional analytic setting, it is important to discuss one feature of the energy. The contribution to the kinetic energy from the irrotational part of the velocity field is given by
\be
\label{irrotational kinetic energy term}
	\frac{1}{2} \int_{\fluidD_t} |\nabla \Phi|^2 \, dx = \frac{1}{2} \int_{\mathbb{R}} \varphi G(\eta) \varphi \, dx_1, 
\ee
where $G(\eta)$ is the Dirichlet--Neumann operator associated to $\fluidD_t$; see Section~\ref{hamiltonian formulation ww problem section}. In particular, $G(0)$ is the Fourier multiplier $|\partial_{x_1} | \tanh{|\partial_{x_1}|}$. Note that in the infinite-depth case, $G(0) = |\partial_{x_1}|$, and so purely irrotational kinetic energy~\eqref{irrotational kinetic energy term} is finite so long as $\varphi$ is in the homogeneous Sobolev space $\dot H^{1/2}(\mathbb{R})$. Here, however, we will need to work with $\varphi$ in the more exotic space $\Xspace_2$ which can be realized as the closure of Schwartz functions with respect to the norm
\begin{equation*}
	\n{\placeholder}_{\Xspace_2} \colonequals \| \sqrt{|\partial_{x_1} | \tanh{|\partial_{x_1}|}} \placeholder \|_{L^2(\mathbb{R})}.
\end{equation*}
Adapting the analysis to this space is one of the main technical challenges for the finite-depth case relative to the infinite-depth case considered in~\cite{varholm2020stability}.

That said, our main result on stability is the following.

\begin{theorem}[Stability]
\label{intro stability theorem}
Let $\eta^{\beta,F,\Bs},\varphi^{\beta,F,\Bs},\mathbf{z}^{\beta,F,\Bs})\in\cm_\loc$ be a solitary wave carrying a point vortex constructed in Theorem~\ref{intro existence point vortex theorem}. Then, $(\eta^{\beta,F,\Bs},\varphi^{\beta,F,\Bs},\mathbf{z}^{\beta,F,\Bs})\in\cm_\loc$ is conditionally orbitally stable in the following sense: for any $R,r>0$, there exists $r_0>0$ such that if $u = (\eta,\varphi,\mathbf{z})$ is a solution of~\eqref{eq:general-Hamiltonian-formulation} on $[0,t_0)$ satisfying
\be
\label{intro tubular bound}
	\sup_{t \in [0,t_0)}  \left( \| (\eta(t) \|_{H^{3+}} + \| \varphi(t) \|_{\dot H^{\frac{5}{2}+} \cap \Xspace_2} + |\bfz_2(t)| \right)  < R, 
\ee
with initial data satisfying
\be
\label{intro initial data bound}
	\| \eta(0) - \eta^{\beta,F,\Bs} \|_{H^1} + \| \varphi(0) - \varphi^{\beta,F,\Bs} \|_{\Xspace_2} + |\mathbf{z}(0) - \mathbf{z}^{\beta,F,\Bs} | < r_0,
\ee
then 
\be
\label{intro orbital stability}
\begin{aligned}
	\sup_{t \in [0,t_0)} \inf_{s \in \mathbb{R}} & \Big( \| \eta(t, \placeholder -s) - \eta^{\beta,F,\Bs} \|_{H^1} + \| \varphi(t, \placeholder -s) - \varphi^{\beta,F,\Bs} \|_{\dot H^{\frac{1}{2}} \cap \Xspace_2} \\
	& \qquad\qquad + |\mathbf{z}(t) + s \mathbf{e}_1 - \mathbf{z}^{\beta,F,\Bs} | \Big)< r.
\end{aligned}
\ee
\end{theorem}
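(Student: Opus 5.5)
The plan is to use the general Grillakis--Shatah--Strauss framework for Hamiltonian systems with a state-dependent Poisson map and symmetry, in the form adapted by Varholm, Wahlén, and Walsh~\cite{varholm2020stability}. There the main hypotheses are: (i) the traveling wave $U^c$ is a critical point of the augmented Hamiltonian $\augHam = \eng - c\mom$; (ii) the spectrum of $D^2\augHam(U^c)$ consists of one simple negative eigenvalue, a kernel spanned by the generator $T'(0)U^c = -\partial_{x_1} U^c$ of translations, and a remaining part bounded away from zero; (iii) the moment of instability $d''(c)>0$, where $d(c) \colonequals \augHam(U^c)$. Under these hypotheses, conditional orbital stability follows by the standard Lyapunov argument. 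On the tube defined by~\eqref{intro tubular bound}, one estimates $\augHam(u(t)) - \augHam(U^c)$ from below by the coercive quadratic form, modulo translations and the constraint $\mom(u)=\mom(U^c)$, and from above by the conserved quantities evaluated at $t=0$. The cubic remainders are controlled by the strong norms in~\eqref{intro tubular bound} and the weak norms $H^1\times \Xspace_2\times\R^2$ in~\eqref{intro initial data bound}--\eqref{intro orbital stability}.

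First, I would fix the functional setting. The energy space is $\Xspace = H^1(\R)\times \Xspace_2 \times \R^2$, and the strong space is the one in~\eqref{intro tubular bound}. I would verify that $\eng$ and $\mom$ are smooth on an open subset of the strong space and extend continuously in the required sense to the energy space. The key ingredients are the analyticity of the Dirichlet--Neumann operator $G(\eta)$ as a map into operators from $\Xspace_2$ to its dual, and the smoothness of the vortex contributions, which involve $\Theta$ and its traces, as long as $\bfz$ stays away from $\fluidS_t$ and $\fluidB$. The finite-depth space $\Xspace_2$ is the delicate point. Low frequencies behave like $\dot H^1$ and high frequencies like $\dot H^{1/2}$, so I would redo the multiplier and commutator estimates of~\cite{varholm2020stability} with $|\xi|\tanh|\xi|$ in place of $|\xi|$. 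I would also check that $J(u)$ and the translation group satisfy the compatibility assumptions of the abstract theorem.

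Second, I would verify the spectral condition perturbatively in $\beta$. At $\beta=0$ the linearized augmented Hamiltonian decouples into three blocks. The first is the quadratic form
\[
\int \left(\Bs |\eta'|^2 + \tfrac{1}{F^2}\eta^2\right) dx_1 + \text{(kinetic terms in $\varphi$)},
\]
which for $\Bs>1/3$ and $F<1$ is positive definite on $H^1\times\Xspace_2$ modulo the $\varphi$-translation degeneracy coming from the choice of frame. To see this, complete the square in $\varphi$: the resulting symbol $\Bs\xi^2+F^{-2}-\xi\coth\xi$ is bounded below by a positive multiple of $1+\xi^2$ exactly when $\Bs>1/3$ and $F<1$, since $\xi\coth \xi \le 1 + \xi^2/3$. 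The second block is the $\bfz$ part, which is a $2\times2$ matrix whose signature I would compute from the expansion of the vortex--image interaction and the Helmholtz--Kirchhoff relation. I expect one negative direction in $\bfz_2$ and a kernel in $\bfz_1$, with the kernel corresponding to translation at leading order. I would then use Theorem~\ref{intro existence point vortex theorem} to expand $D^2\augHam$ in $\beta$, and use a Kato-type perturbation argument to show that the simple negative eigenvalue and the one-dimensional kernel persist for small $|\beta|$.

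Third, I would compute the moment of instability. I would parametrize by $c$ through the relation between $F$ and $c$, or equivalently by $\beta$ at fixed physical parameters. Then $d'(c) = -\mom(U^c)$, and the expansions~\eqref{intro pv asymptotics} give $\mom$ to leading order in terms of $\gamma$ and $\bfz_2$, so that $d''$ is computed from $\partial_c \mom$. I expect the sign to be determined by the leading term $\gamma\approx 4\pi\beta$. Achieving the correct sign may require choosing the parametrization of the family carefully. Positivity of $d''$ would then allow the negative direction to be excluded on the momentum constraint surface.

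The main obstacle is the first step. One must justify smoothness of $\eng$ and $\mom$ and the cubic remainder estimates in the nonstandard space $\Xspace_2$ while the vortex sits at distance $O(\beta)$ from the bed. The image system for $\Theta$ then involves reflections across both $\fluidB$ and $\fluidS_t$. Its contribution to $\mom$ and to the kinetic energy is therefore singular in $\beta$, and this needs care when establishing uniform coercivity.
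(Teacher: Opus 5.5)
Your architecture is the paper's: the relaxed GSS theorem of \cite{varholm2020stability} on $\Xspace=H^1\times\Xspace_2\times\R^2$, a spectral check perturbing off a decoupled $(\eta,\varphi)$-block (positive by $\xi\coth\xi\le 1+\xi^2/3$, $\Bs>\frac13$, $F<1$) and a $\bfz$-block with one negative direction in $\bfz_2$ and a kernel in $\bfz_1$, and $d''>0$ via $d'(c)=-\mom(U_c)$. The gap is in the step that actually decides stability. You never identify the family along which $d$ is differentiated, and you leave the sign of $d''$ as an expectation (``may require choosing the parametrization carefully'').

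The Hamiltonian system depends on the dimensional circulation $\epsilon=c\gamma$, which enters $\eng$, $\mom$ and $J(u)$. So all $U_c$ must be bound states of one system, with $(g,\sigmas,d,\epsilon)$ frozen. Varying $c$ through $F$ at fixed $\beta$ changes $\epsilon$, since $\gamma\approx 4\pi\beta$. Then the abstract criterion does not apply, and $d'(c)=-\mom(U_c)$ acquires an extra $\partial_\epsilon$ term. Once $\epsilon$ is frozen, no freedom is left. The paper normalizes $g=d=1$ (so $F=c$, $\Bs=\sigmas/c^2$), fixes $\epsilon_0=c_0\gamma_0$, and solves $c\,\gamma^{\beta,c}=\epsilon_0$ by the implicit function theorem. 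This gives $c'(\beta_0)=-\frac{c_0}{\beta_0}(1+O(\beta_0^2))$, i.e.\ $c\propto 1/\beta$. Next, $\nabla\mom(U_c)=(0,0,\epsilon_0\mathbf{e}_2)+O(\beta_0^2)$ and $\partial_\beta U_{c(\beta)}=(0,0,\mathbf{e}_2)+O(\beta_0)$. The chain-rule terms through $c(\beta)$ are harmless, because $\partial_c$ of the profile is $O(\beta_0^2)$ while $c'=O(\beta_0^{-1})$. Hence $c'(\beta_0)\,d''(c(\beta_0))=-\epsilon_0+O(\beta_0^2)$, so $d''=4\pi\beta_0^2+O(\beta_0^3)>0$. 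The sign is forced: $\mom\approx\epsilon\bfz_2$ increases with $\beta$ while $c$ decreases. It is computed, not arranged.

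Three further points. First, the $\beta$-singularity you flag as the main obstacle is handled by the same scaling, not by uniform estimates. The bed-image term $\frac12\epsilon^2\Gamma_2(\bfz)$, with $\Gamma_2(\bfz)=\frac{1}{2\pi}\log\abs{\sin(\pi\bfz_2)}$, has $\bfz_2$-Hessian $\approx-\epsilon^2/(4\pi\beta^2)\approx-4\pi c_0^2$. This is exactly the $O(1)$ negative entry in \eqref{eq:H0}, and since the result is proved for each fixed small $\beta_0$, nothing must be uniform as $\beta\to0$. Second, when you ``check the compatibility assumptions'' on $J(u)$, you will find that in finite depth $\Xi_{x_2}|_{\fluidS_t}\to\pm\frac12$ at infinity. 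Consequently $B(u)$ is neither bijective nor $C^1$ as \cite[Theorem 2.4]{varholm2020stability} literally requires. The paper must observe that the stability proof does not use those hypotheses (Remark~\ref{weakened hypotheses remark}). Third, the generator is affine, $T'(0)U_c=(-\eta',-\varphi',\mathbf{e}_1)$, not $-\partial_{x_1}U_c$. Its $\mathbf{e}_1$ component supplies the non-degeneracy and the leading-order kernel vector.
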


The classical approach to proving orbital stability or instability for Hamiltonian systems in the presence of symmetry is the GSS method of Grillakis, Shatah and Strauss~\cite{grillakis1987stability1}. That machinery, unfortunately, does not apply to the wave-borne point vortex problem for a number of reasons: the Poisson map $J=J(u)$ is neither surjective nor independent of the state $u$, and the energy, smoothness, and well-posedness spaces for capillary-gravity water waves do not coincide because the problem is quasilinear. We therefore use a relaxed version of GSS developed in~\cite{varholm2020stability} to treat the deep water wave-borne point vortex problem. 

One feature of that theory is that it is formulated on a scale of spaces $\Wspace\hookrightarrow\Vspace\hookrightarrow\Xspace$ rather than a single space, with stability made conditional on an a priori bound in $\Wspace$. This scale is the reason behind the varying norms in the statement of Theorem~\ref{intro stability theorem}. In particular, the assumption in~\eqref{intro tubular bound} amounts to asking that $\sup_{t \in [0,t_0)} \| u(t) \|_{\Wspace} < R$, whereas the norms occurring in~\eqref{intro initial data bound} and \eqref{intro orbital stability} are at the level of the energy space $\mathbb{X}$.

\subsection{Plan of the article}

Section~\ref{steady point vortex section} is dedicated to the proof of Theorem~\ref{intro existence point vortex theorem}. It is well-known that in the supercritical Bond number and subcritical Froude number regime ($\Bs > 1/3$, $F^2 < 1$), the linearization of the irrotational water wave problem at a trivial laminar flow has trivial kernel. For the wave-borne point vortex system, we prove that the corresponding linearized operator is a triangular operator matrix. One diagonal entry is identical to that for the irrotational system, which we prove is invertible. The other diagonal entry arises from linearizing the Helmholtz--Kirchhoff model~\eqref{intro steady KH equation}, which is likewise easily seen to be finite-dimensional and invertible. An application of the implicit function theorem completes the argument. Note that this closely parallels the gravity wave case in~\cite{chen2025vortex}, except that with surface tension, we have a Wentzell-type boundary condition.

Next, the proof of Theorem~\ref{intro existence hollow vortex theorem} is carried out in Section~\ref{steady hollow vortex section}. Here, in contrast to~\cite{chen2025vortex}, we do not take a wave-borne point vortex and desingularize it, but rather inject a hollow vortex into the fluid domain through the bed. That is, formally, we take the trivial irrotational wave and look for solutions with a vortex core of approximate radius $0 < \rho \ll 1$ that is $O(\rho)$ above the bed and about which the circulation is $O(\rho)$.  As mentioned above, this scaling is motivated by the need to balance the $O(1/\rho)$ curvature term in the Bernoulli condition~\eqref{intro steady dynamic condition core} with the kinetic energy density.  A quite delicate analysis is required to understand the asymptotics of this system as $\rho \searrow 0$. Ultimately, though, we show that the linearized problem at the trivial solution is indeed invertible, and conclude the existence of nontrivial wave-borne hollow vortices once again by the implicit function theorem.

Finally, Section~\ref{stability section} contains the proof of Theorem~\ref{intro stability theorem}. After recalling the version of the GSS method from~\cite{varholm2020stability}, we confirm that the wave-borne point vortex system has a (noncanonical) Hamiltonian formulation in the spirit of Zakharov--Craig--Sulem, and that sections of the families of traveling waves constructed in Theorem~\ref{thm:existence} can be characterized as constrained minimizers of the energy on level sets of the (linear horizontal) momentum. Following the basic idea of Mielke~\cite{mielke2002energetic}, as well as the earlier work~\cite{varholm2020stability}, we confirm that the Hessian of the augmented Hamiltonian has Morse index $1$ when the vortex is sufficiently close to the bed. In fact, we find that the unique negative eigenvalue of the Hessian is $O(c^2)$, which, in contrast to~\cite{varholm2020stability}, is now $O(1)$ rather than perturbative. Likewise, the moment of instability is confirmed to be strictly convex in this range, which implies the orbital stability of the corresponding bound state.

\subsection*{Notation}

Before beginning, we lay out some notational conventions used throughout the paper.

For $D\subseteq\mathbb{R}^n$ or $\mathbb{C}$, any non-negative integer $\ell$, and any $\alpha\in(0,1)$, let $C^{\ell + \alpha}(D)$ denote the space of H\"older continuous functions of order $\ell$, exponent $\alpha$, and domain $D$. If $D$ is unbounded, let $C_0^{\ell + \alpha}(D)$ denote the subspace of all $u\in C^{\ell + \alpha}(D)$ whose partial derivatives of order at most $\ell$ vanish uniformly at infinity, and $C_\bdd^{\ell+\alpha}(D)$ the subspace of those $u$ whose partial derivatives of order at most $\ell$ are bounded. If $D$ is instead a subset of $\mathbb{C}$, then $\partial_z$ stands for the standard Wirtinger derivative, and $C_{\text{e}}^{\ell + \alpha}(D)$ denotes those functions which are even across the imaginary axis. Likewise, define $C_{0,\text{e}}^{\ell + \alpha}(D) \colonequals C_0^{\ell + \alpha}(D)\cap C_{\text{e}}^{\ell + \alpha}(D)$. We denote by $H^s(D)$ the standard $L^2$-based Sobolev space over the domain $D$ of order $s \in \mathbb{R}$, while the homogeneous version is written $\dot H^s(D)$. In Sections~\ref{steady point vortex section} and~\ref{steady hollow vortex section}, the regularity index $k \geq 2$ and the exponent $\alpha \in (0,1)$ are fixed; Section~\ref{stability section} requires $k$ to be sufficiently large.

We write $\jbracket{\placeholder} \colonequals (1 + |\placeholder|^2)^{1/2}$ for the Japanese bracket and $\nabla^\perp \colonequals (-\partial_{x_2}, \partial_{x_1})$. The notation $A \lesssim B$ means that $A \leq CB$ for a constant $C > 0$ independent of the relevant quantities, and $A \eqsim B$ means that $A \lesssim B \lesssim A$. For Banach spaces $X$ and $Y$, $\Lin(X; Y)$ is the space of bounded linear operators from $X$ to $Y$, and $\Lin(X) \colonequals \Lin(X;X)$; for a (possibly unbounded) linear operator $L$, $\operatorname{Dom}{L}$ and $\operatorname{Rng}{L}$ denote its domain and range. 

We write $\mathbb{T}$ for the complex unit circle. For a complex- or real-valued function $\phi = \phi(\tau)$, with $\tau = e^{i\theta} \in \mathbb{T}$, we define $\partial_\tau \phi = -i e^{-i\theta} \partial_\theta \phi$. This allows one to define the corresponding spaces $C^{k+\alpha}(\mathbb{T})$ and $C^{k+\alpha}(\mathbb{T};\mathbb{R})$ of Hölder continuous functions of order $k \geq 0$ and exponent $\alpha \in (0,1)$ with domain $\mathbb{T}$. Any $\phi \in C^{\alpha}(\mathbb{T})$ admits the Fourier series representation
	$$\phi(\tau) = \sum_{m\in\mathbb{Z}}\hat{\phi}_m\tau^m,$$
where $\tau = e^{i\theta}$, and
	$$\hat{\phi}_m = \frac{1}{2\pi}\int_{\mathbb{T}}\phi(\tau)\tau^{-m}d\theta.$$
When $\phi$ is real-valued, the Fourier modes must satisfy $\hat{\phi}_{-m} = \overline{\hat{\phi}_{m}}$. Denote by $\mathring{C}^{\ell + \alpha}(\mathbb{T})$ the space of all $\phi\in C^{\ell + \alpha}(\mathbb{T})$ with $\hat{\phi}_0 = 0$. For any $m \geq 0$, define the projection $P_m \colon C^{k+\alpha}(\mathbb{T}) \to C^{k+\alpha}(\mathbb{T})$ by
\[
	P_m \phi (\tau) \colonequals \left\{
	\begin{aligned}
		\widehat\phi_m \tau^m + \widehat \phi_{-m} \tau^{-m} & \qquad \textrm{if } m > 0 \\
		\widehat \phi_0 & \qquad \textrm{if } m = 0,
	\end{aligned}
	\right.
\]
and set $P_{\leq m} \colonequals P_0 + \cdots + P_m$, $P_{<m} \colonequals P_{\leq m} - P_m$, and $P_{> m} \colonequals 1- P_{\leq m}$.  

\section{Wave-borne point vortices}
\label{steady point vortex section}

In this section, we prove the existence of a family of steady capillary-gravity wave-borne point vortices. Fix an integer $k \geq 2$ and Hölder exponent $\alpha \in (0,1)$. These will be constant throughout the section; the air-sea interface we construct will ultimately be of class $C^{k+\alpha}$. 

\subsection{Conformal reformulation} 
Identify the Cartesian coordinate $x=(x_1,x_2)\in\R^2$ with the complex number $z=x_1+x_2 i\in\C$. The objective is to find traveling waves carrying a single submerged point vortex located at $\bfz=ib$, for some $-1<b<0$, with nondimensionalized vortex strength $\gamma$. The governing equations~\eqref{intro steady Euler} can be rewritten in the $z$-plane as follows. First, the fact that $\bfu$ is irrotational and incompressible is equivalent to the complex relative velocity field satisfying the Cauchy--Riemann equations:
\begin{equation}\label{eq:complex-irrotational-incompressible}
	\bfu_1 - i\bfu_2 \text{ is holomorphic in } \fluidD \setminus \{\bfz\}.
\end{equation}
The kinematic condition~\eqref{intro steady kinematic condition} on the bed and surface likewise become
\begin{equation}\label{eq:complex-kinematic}
	\bfu_1 + i\bfu_2 \text{ is tangent to } \fluidS \cup \fluidB,
\end{equation}
while the dynamic condition~\eqref{intro steady dynamic condition surface} on the air-sea interface is unchanged.  It is well-known that together~\eqref{intro steady dynamic condition surface} and~\eqref{eq:complex-irrotational-incompressible} imply that the momentum equations in~\eqref{eq:2D-steady-Incompressible-Euler} are satisfied in $\fluidD\setminus\{ \bfz\}$. On the other hand, the Helmholtz--Kirchhoff model for the point vortex motion can be written quite concisely as 
\begin{equation}\label{eq:complex-HK}
	\bfu_1 - i\bfu_2 = \frac{\gamma}{2\pi i}\,\frac{1}{z-ib}+ O(\abs{z - ib}) \qquad \text{as } z \to ib.
\end{equation}
Note that the absence of an $O(1)$ term above is equivalent to the vortex being stationary in the moving frame, meaning that it is carried by the wave.

\begin{figure}
\includegraphics{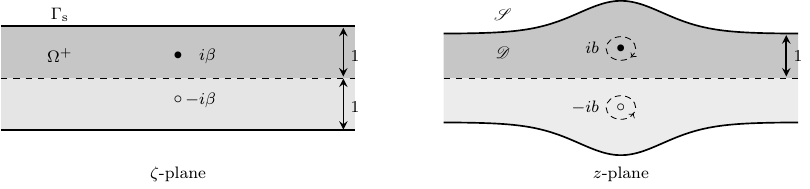}
\caption{Conformal and (doubled) physical domain for the wave-borne point vortex problem.}
\end{figure}

As always, a major challenge in studying water waves of any kind is that determining the fluid domain $\fluidD$ is itself part of the problem. For two-dimensional waves, a classical strategy for  addressing this issue is to view $\fluidD$ as the image of an infinite strip under an a priori unknown conformal mapping $f$, effectively pulling the system back to a fixed canonical domain at the price of making the governing equations there more complicated. Our convention will be that the conformal domain is in the complex $\zeta$-plane.

It will be useful to build in a vertical shift in coordinates so that the bed $\fluidB$ is the image of the real axis in the $\zeta$-plane. This choice permits us to ``double'' the domain via the Schwarz reflection principle, with the kinematic condition on the bed enforced via a symmetry assumption. With that in mind, denote by $\zeta = \zeta_1 + \zeta_2i$ the coordinates in the conformal domain, and set
\begin{equation*}
	\Omega \colonequals \{\zeta \in \C: -1< \zeta_2 < 1\}, \qquad \Omega_+ \colonequals \{\zeta \in \C : 0 < \zeta_2 < 1\}, \qquad \Gamma \colonequals \{\zeta \in \C : \zeta_2 = 1\}.
\end{equation*}
If the air-sea interface $\fluidS$ is $C^{k+\alpha}$ for some $k \geq 2$, then by Kellogg's theorem, there exists a mapping $f\in C^{k+\alpha}(\bar\Omega)$ injective and conformal on $\bar\Omega$ such that $f(\Omega_+)=\fluidD$. We are free to further require that
\begin{equation*}
	\partial_\zeta f(\zeta) \to 1 \qquad \text{as } |\zeta_1| \to \infty.
\end{equation*}
For small-amplitude waves, we expect to have that $f+i$ is a perturbation of the identity. We assume that the fluid domain is even with respect to the imaginary axis, and, as discussed above, ask for even symmetry with respect to the line $z = -i$. These requirements translate to two symmetry conditions on the conformal mapping
\begin{equation}\label{eq:symmetry-condition}
	f(-\zeta)+i=-f(\zeta)-i,\qquad f(\bar\zeta)+i = \overline{f(\zeta)} -i
\end{equation}
for all $\zeta\in\Omega$. In particular, $f+i$ is real-on-real, which forces $f(\R)=\fluidB$. 

Denote by $i\beta$ the pre-image of the point vortex under $f$:
\begin{equation}\label{eq:b-beta-relation}
	ib = f(i\beta)
\end{equation}
The complex relative potential in $\Omega$ then takes the explicit form
\begin{equation}\label{eq:def-W}
	W = W(\zeta;\gamma,\beta) \colonequals \frac{\gamma}{2\pi i}\log\left(\frac{\sinh\left(\frac{\pi}{2}(\zeta - i\beta)\right)}{\sinh\left(\frac{\pi}{2}(\zeta + i\beta)\right)}\right) - \zeta,
\end{equation}
such that any solution of \eqref{eq:complex-irrotational-incompressible}, \eqref{eq:complex-kinematic}, and \eqref{eq:complex-HK} must have complex velocity
\begin{equation}\label{eq:velocity-representation}
	\bfu_1 - i\bfu_2 = \left(\frac{\partial_\zeta W}{\partial_\zeta f}\right)\circ f^{-1};
\end{equation}
see, for example,~\cite{chen2025vortex}. One readily verifies that $\imagpart W$ is constant on $\{\zeta_2 = 0\}$ and $\{\zeta_2 = \pm 1\}$, that $\p_\zeta W$ is meromorphic on $\Omega$ with simple poles at $\zeta = \pm i\beta$ of residues $\pm\frac{\gamma}{2\pi i}$, representing both the physical and phantom vortices, and that $\p_\zeta W\to -1$ as $\abs{\zeta_1}\to\infty$, consistent with the far-field condition $\mathbf{u} \to -\mathbf{e}_1$. In particular, for any conformal mapping $f$ as above, the velocity field given by \eqref{eq:velocity-representation} automatically satisfies the irrotationality and incompressibility condition \eqref{eq:complex-irrotational-incompressible} and the kinematic condition \eqref{eq:complex-kinematic}. It remains to ensure the Bernoulli condition and the Helmholtz--Kirchhoff condition \eqref{eq:complex-HK}. Intuitively, we understand the first term on the right-hand side of~\eqref{eq:def-W} as representing the contribution of the vortex at $i\beta$, a phantom vortex with opposite strength at $-i\beta$, and then periodically extended in the vertical direction to ensure that top and bottom boundaries of $\Omega$ are streamlines. Note that because of this symmetry, we may allow $\beta\in(-1,1)$, which is useful as we are bifurcating from $\beta=0$. For $\beta<0$, the roles of the resulting vortex pair are simply exchanged: the physical vortex is the one in $\Omega_+$, located at $-i\beta$ with the vortex strength $-\gamma$, whose image $-i(2+b)= f(-i\beta)$ is in $\fluidD$.

Since the conformal mapping $f$ can be recovered from its imaginary part, it is convenient to recast the problem in terms of the (real-valued) unknown
\begin{equation*}
	w(\zeta) \colonequals \imagpart(f(\zeta)+i-\zeta) = \imagpart f(\zeta) +1 - \zeta_2.
\end{equation*}
Thus, $w\in C^{k+\alpha}(\bar\Omega)$ is a harmonic function that vanishes at infinity and inherits the symmetry of the conformal mapping $f+i$ from \eqref{eq:symmetry-condition}. We therefore introduce the natural function space for $w$:
\begin{equation}\label{eq:function-space-w}
	\mathscr{W} \colonequals \left\{ w \in C_{0,\even}^{k+\alpha}(\overline\Omega)  : \Delta w = 0 \text{ in } \Omega,~w\textrm{ is odd in } \zeta_2 \right\}.
\end{equation}

Over the conformal domain $\Omega$, the Helmholtz-Kirchhoff condition \eqref{eq:complex-HK} reads
\begin{equation}\label{eq:HK-conformal}
	\frac{\partial_\zeta W}{\partial_\zeta f} - \frac{\gamma}{2\pi i}\,\frac{1}{f - ib} = O(\abs{\zeta - i\beta}) \qquad\text{as } \zeta \to i\beta.
\end{equation}
Using the explicit formula for $W$ in~\eqref{eq:def-W}, and performing a Laurent expansion of $\partial_\zeta W$ at $\zeta = i\beta$, one finds
\begin{equation}\label{eq:W-Laurent-expansion}
	\frac{\partial_\zeta W(\zeta)}{\partial_\zeta f(\zeta)} - \frac{\gamma}{2\pi i}\,\frac{1}{f(\zeta)-ib}
	= \frac{1}{\partial_\zeta f(i\beta)}\left(-1 + \frac{\gamma}{4}\cot(\pi\beta) - \frac{\gamma}{4\pi i}\,\frac{\partial_\zeta^2 f(i\beta)}{\partial_\zeta f(i\beta)}\right) + O(\abs{\zeta - i\beta}),
\end{equation}
Thus, \eqref{eq:HK-conformal} holds if and only if the constant term in \eqref{eq:W-Laurent-expansion} vanishes. By the symmetries of $w$, we have $\p_{\zeta_1}w, \partial_{\zeta_1}\partial_{\zeta_2}w=0$ on the imaginary axis, so the Cauchy--Riemann equations give
\begin{equation*}
	\partial_\zeta f(i\beta) = 1 + \partial_{\zeta_2} w(i\beta) \in \R, \qquad \partial_\zeta^2 f(i\beta) = i\partial_{\zeta_1}^2 w(i\beta).
\end{equation*}
Therefore, the constant term in the Laurent expansion~\eqref{eq:W-Laurent-expansion} vanishes precisely when
\begin{equation*}
	-1 + \frac{\gamma}{4}\cot(\pi\beta) - \frac{\gamma}{4\pi}\frac{\partial_{\zeta_1}^2 w}{1+\partial_{\zeta_2}w}\bigg|_{\zeta=i\beta} = 0.
\end{equation*}
Introducing the auxiliary unknown $\varkappa \in \R$ through the following equation
\begin{equation}\label{eq:point-vortex-advection}
	\frac{\partial_{\zeta_1}^2 w}{1+\partial_{\zeta_2} w}\bigg|_{\zeta=i\beta} = \pi\varkappa,
\end{equation}
we can solve for $\gamma$ and arrive at
\begin{equation}\label{eq:gamma-relation}
	\gamma = \gamma(\varkappa,\beta) \colonequals \frac{4\sin(\pi\beta)}{\cos(\pi\beta)-\varkappa\sin(\pi\beta)}.
\end{equation}
Note that $\varkappa$ being real-valued is necessary for there to exist a real-valued $\gamma$ satisfying the Helmholtz--Kirchhoff equation.

Under the relation \eqref{eq:gamma-relation}, the Helmholtz--Kirchhoff condition is therefore equivalent to \eqref{eq:point-vortex-advection}. Note that the denominator in \eqref{eq:point-vortex-advection} is nonzero due to the fact that the conformal mapping $f$ has non-vanishing derivatives.

Next, we rewrite the Bernoulli condition \eqref{intro steady dynamic condition surface} in conformal variables as
\begin{equation}\label{eq:conformal-Bernoulli}
	-\Bs\,\frac{(1+\partial_{\zeta_2}w)\partial_{\zeta_1}^2 w-(\partial_{\zeta_1} \partial_{\zeta_2}w) \partial_{\zeta_1} w}{\left((\partial_{\zeta_1} w)^2 + (1+\partial_{\zeta_2} w)^2\right)^{3/2}} + \frac12\,\frac{a^2}{(\partial_{\zeta_1}w)^2 + (1+\partial_{\zeta_2} w)^2} + \frac{1}{F^2}w = \frac12 \qquad\text{on } \Gamma,
\end{equation}
where the function $a=a(\zeta_1;\varkappa,\beta)$ is given by
\begin{equation}\label{eq:def-a}
	a = a(\zeta_1;\varkappa,\beta) \colonequals \partial_\zeta W(\zeta_1 + i;\gamma(\varkappa,\beta),\beta) = -1 - \frac{\gamma}{2}\,\frac{\sin(\pi\beta)}{\cosh(\pi\zeta_1)+\cos(\pi\beta)}.
\end{equation}

The argument above shows that the steady wave-borne point vortex problem~\eqref{intro steady Euler} can be recast in terms of the unknowns $(w,\varkappa)$, which must satisfy \eqref{eq:point-vortex-advection} and \eqref{eq:conformal-Bernoulli}, the pair $(w,\varkappa)$ serves as the unknown, while the conformal altitude $\beta$, the Froude number $F$, and air-sea interface Bond number $\Bs$ serve as parameters. In order for solutions to be conformal problem to give rise to physical solutions, we require in addition that the resulting mapping $f$ is indeed injective and conformal on $\overline\Omega$.

\subsection{Existence of steady wave-borne point vortices}

We now proceed to give the proof of Theorem~\ref{intro existence point vortex theorem}, which is done through the implicit function theorem applied at the trivial solution $(w,\varkappa,\beta) = (0,0,0)$. 
 Towards that end, we reformulate the system one last time as an abstract operator equation. The ambient space for the domain of this operator we take to be 
\[
	\scrX \colonequals \left( \scrW \cap H^{k+\frac12}(\Omega) \right) \times \R
\]
Note that additional Sobolev regularity of $w$ is built into the definition, as this will be necessary when studying orbital stability in Section~\ref{stability section}. We work in the open set
\begin{equation}\label{eq:def-O}
	\mathscr{O} \colonequals \left\{(w,\varkappa,\beta,F,\Bs) \in \scrX\times\R^3 : \qquad 
	\begin{aligned}
	&(\partial_{\zeta_1} w)^2+(1+\partial_{\zeta_2}w)^2 > 0 \text{ in } \overline{\Omega},\\
	&\abs{\beta} < 1, \quad 0<F<1, \quad \Bs > \tfrac{1}{3}, \\
	&\cos(\pi\beta) > \varkappa\sin(\pi\beta)
	\end{aligned}\right\},
\end{equation}
membership in which ensures that the mapping $f$ is conformal and that $\gamma(\varkappa,\beta)$ is well-defined by \eqref{eq:gamma-relation} and that we remain in the same connected component of the domain of $\gamma$ as the trivial solution. In particular, we have
\begin{equation*}
	\gamma(\varkappa,\beta)\sin(\pi\beta) \geq 0,
\end{equation*}
with equality if and only if $\beta = 0$. Consequently, the coefficient $a(\,\cdot\,;\varkappa,\beta) \leq -1$ for all $(w,\varkappa,\beta,F,\Bs) \in \mathscr{O}$. Note also that for $w$ sufficiently small, $f$ is necessarily injective on $\overline{\Omega}$, and hence the conformal reformulation is equivalent to the original physical problem.

We may now rewrite the steady wave-borne point vortex problem as
\[
	\mathscr{F}(w,\varkappa;\beta,F,\Bs) = 0,
\]
where $\mathscr{F} \colonequals (\mathscr{F}_1,\mathscr{F}_2) \colon \mathscr{O} \to \mathscr{Y}$ is given by
\[
	\begin{split}
	& \mathscr{F}_1(w,\varkappa;\beta,F,\Bs) \\
	& \qquad \colonequals \left(-\Bs\,\frac{(1+\partial_{\zeta_2}w)\partial_{\zeta_1}^2 w-(\partial_{\zeta_1} \partial_{\zeta_2}w) \partial_{\zeta_1} w}{\left((\partial_{\zeta_1} w)^2 + (1+\partial_{\zeta_2} w)^2\right)^{3/2}} + \frac12\,\frac{a(\placeholder; \varkappa,\beta)^2}{(\partial_{\zeta_1}w)^2 + (1+\partial_{\zeta_2} w)^2} + \frac{1}{F^2}w - \frac12\right)\bigg|_{\Gamma}\\
	& \mathscr{F}_2(w,\varkappa;\beta,F,\Bs) \colonequals \frac{1}{\pi}\,\frac{\partial_{\zeta_1}^2 w}{1+\partial_{\zeta_2}w}\bigg|_{\zeta=i\beta} - \varkappa,
	\end{split}
\]
with the codomain being
\begin{equation*}
	\mathscr{Y} \colonequals \left(C_{0,\even}^{k-2+\alpha}(\Gamma) \cap H^{k-2}(\Gamma)\right)\times\R.
\end{equation*}
Clearly, $\F$ is a real-analytic mapping, and $\mathscr{F}(0,0; 0,F,\Bs) = 0$ for any choice of $F$ and $\Bs$. The next theorem is equivalent to Theorem~\ref{intro existence point vortex theorem} but stated in terms of the conformal variables.

\begin{theorem}[Capillary-gravity wave-borne point vortices] \label{thm:existence}
For any subcritical Froude number $0<F_0<1$ and any supercritical Bond number $\Bs^0>\tfrac{1}{3}$, there exists a three-dimensional real-analytic manifold $\cmconf_\loc$ of solitary capillary-gravity water waves with a submerged point vortex such that the following holds.
\begin{enumerate}[label = \rm(\alph*)]

\item In a neighborhood of $(0,0;0,F_0,\Bs^0)$ in $\mathscr{X}\times\mathbb{R}^3$, $\cmconf_\loc$ comprises the entire zero-set of $\mathscr{F}$.

\item The manifold admits the real-analytic parameterization
	\begin{equation}
		\cmconf_{\mathrm{loc}} = \{(w^{\beta,F,\Bs},\varkappa^{\beta,F,\Bs};\beta,F,\Bs) : 
    |\beta|+|F-F_0|+|\Bs-\Bs^0|\ll 1\} \subset \mathscr{O}
	\end{equation}
where 
\[
	\left(w^{0,F_0,\Bs^0},\varkappa^{0,F_0,\Bs^0}\right)=(0,0), \qquad \left(w^{-\beta,F,\Bs},\varkappa^{-\beta,F,\Bs}\right)=\left(w^{\beta,F,\Bs},-\varkappa^{\beta,F,\Bs}\right).
\]
\item The solutions on $\cmconf_\loc$ have the asymptotic expansions
\begin{equation}\label{eq:w-kappa-asymptotics}
    \begin{aligned}
        w^{\beta,F,\Bs} &=  \frac{1}{2}\ddot{w}^{F,\Bs} \beta^2 + O(\beta^4) \qquad \textrm{in } \scrW \cap H^{k+\tfrac{1}{2}}(\Omega) \\
        \varkappa^{\beta,F,\Bs} &=  \frac{1}{2\pi}
        (\partial_{\zeta_1}^2 \partial_{\zeta_2}\ddot{w}^{F,\Bs})(0) \beta^3 
        + O(\beta^5),
    \end{aligned}
    \end{equation}
    where $\ddot{w}^{F,\Bs}$ is the unique solution of
	\begin{equation*}
	\left\{\begin{aligned}
	\Delta \ddot{w}^{F,\Bs}=0&\quad\text{in }\Omega\\
	\ddot{w}^{F,\Bs}=0&\quad\textrm{on }\{{\zeta_2} = 0\}\\
	\Bs \partial_{\zeta_1}^2 \ddot{w}^{F,\Bs}+\partial_{\zeta_2} \ddot{w}^{F,\Bs} - \frac{1}{F^2}\ddot{w}^{F,\Bs}=2\pi^2\sech^2{\left(\frac{\pi}{2}{\zeta_1}\right)}&\quad\textrm{on }\Gamma.\\
	\end{aligned}\right.
	\end{equation*}
\item The nondimensionalized vortex strength $\gamma$ and the vertical coordinate of the vortex center $b$ satisfy
	\begin{align}
		\gamma^{\beta,F,\Bs} &= 4\pi\beta + \frac43\pi^3\beta^3 + O(\beta^4),\label{eq:gamma-asymptotics}\\
		b^{\beta,F,\Bs} &= -1+\beta +\frac12(\p_{\zeta_2}\ddot{w}^{F,\Bs})(0)\beta^3 + O(\beta^4).\label{eq:b-asymptotics}
	\end{align}
\end{enumerate}	
	\end{theorem}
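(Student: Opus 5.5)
We apply the implicit function theorem to $\mathscr{F}$ at $(0,0;0,F_0,\Bs^0)$, solving for $(w,\varkappa)$ in terms of $(\beta,F,\Bs)$. Since $\mathscr{F}$ is real-analytic on $\mathscr{O}$ and vanishes on the trivial family, it suffices to show that $D_{(w,\varkappa)}\mathscr{F}(0,0;0,F,\Bs)\in\Lin(\scrX;\mathscr{Y})$ is an isomorphism. At $\beta=0$ we have $\gamma=0$ and $a\equiv-1$. Linearizing then gives
\[
	D_w\mathscr{F}_1\dot w = \Bigl(-\Bs\,\partial_{\zeta_1}^2\dot w - \partial_{\zeta_2}\dot w + \tfrac{1}{F^2}\dot w\Bigr)\Big|_\Gamma, \qquad D_\varkappa\mathscr{F}_1 = 0,
\]
and
\[
	D_w\mathscr{F}_2\dot w = \tfrac1\pi\partial_{\zeta_1}^2\dot w(0), \qquad D_\varkappa\mathscr{F}_2=-1.
\]
Indeed, $\partial_\varkappa a = O(\sin\pi\beta)$ vanishes at $\beta=0$. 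The operator is therefore triangular, and invertibility reduces to invertibility of the map $\dot w\mapsto D_w\mathscr{F}_1\dot w$ from $\scrW\cap H^{k+1/2}(\Omega)$ to $C^{k-2+\alpha}_{0,\even}(\Gamma)\cap H^{k-2}(\Gamma)$.

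To invert $D_w\mathscr{F}_1$, we use the oddness of $w$ in $\zeta_2$ to reduce to $\Omega_+$ with $\dot w=0$ on $\{\zeta_2=0\}$. Taking the Fourier transform in $\zeta_1$ gives $\hat w(\xi,\zeta_2)=\hat w(\xi,1)\sinh(\xi\zeta_2)/\sinh\xi$. The symbol of the Wentzell-type boundary operator is then
\[
	m(\xi)=\Bs\xi^2-\xi\coth\xi+\tfrac{1}{F^2}.
\]
Using $\xi\coth\xi\le 1+\xi^2/3$, we get $m(\xi)\ge(\Bs-\tfrac13)\xi^2+(F^{-2}-1)>0$, so $m(\xi)\gtrsim\jbracket{\xi}^2$ uniformly. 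This yields the $H^{k-2}\to H^{k+1/2}(\Omega)$ bound, since $H^{k}(\Gamma)$ traces extend harmonically to $H^{k+1/2}$. For the Hölder part, we combine Schauder estimates for the oblique/Wentzell problem with the decay of the kernel of $1/m$. This kernel is exponentially decaying because $1/m$ is analytic in a strip, and that gives the $C_0$ property. Evenness is preserved. We expect this Hölder/decay bookkeeping to be the main technical step, though it is standard.

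The implicit function theorem then gives the analytic three-parameter family of (a) and (b), with uniqueness near the trivial solution. For the symmetry, note that $\gamma(-\varkappa,-\beta)=-\gamma(\varkappa,\beta)$, so $a^2$ is unchanged and $\partial^2_{\zeta_1}w(-i\beta)=-\partial^2_{\zeta_1}w(i\beta)$ by oddness. Hence $(w,-\varkappa;-\beta)$ is again a solution, and uniqueness gives the stated relation. This also shows that $w$ is even in $\beta$ and $\varkappa$ is odd.

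For (c), we differentiate $\mathscr{F}(w^\beta,\varkappa^\beta;\beta)=0$ in $\beta$. At first order, $\partial_\beta a$ contains $\gamma\sin\pi\beta$, which is $O(\beta^2)$, so $\dot w=0$ and $\dot\varkappa=0$. At second order, $a=-1-\tfrac{\gamma\sin\pi\beta}{2(\cosh\pi\zeta_1+1)}+O(\beta^4)$ with $\gamma\sin\pi\beta\approx4\pi^2\beta^2$. This gives $\tfrac12 a^2=\tfrac12+\pi^2\beta^2\sech^2(\pi\zeta_1/2)+\dots$, and we obtain the stated equation for $\ddot w$; the sign follows from the linearization above. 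For $\varkappa$, we expand $\mathscr{F}_2$ at $\zeta=i\beta$: $\partial_{\zeta_1}^2w(i\beta)=\tfrac12\beta^2\cdot\beta\,\partial_{\zeta_2}\partial_{\zeta_1}^2\ddot w(0)+O(\beta^5)$, using $\ddot w(\cdot,0)=0$.

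Part (d) follows by substitution. Expanding \eqref{eq:gamma-relation} gives $\gamma=4\tan\pi\beta+O(\beta^4)=4\pi\beta+\tfrac43\pi^3\beta^3+O(\beta^4)$, since $\varkappa=O(\beta^3)$. For $b$, we have $b=\imagpart f(i\beta)=\beta-1+w(i\beta)$, and $w(i\beta)=\tfrac12\beta^2\cdot\beta\,\partial_{\zeta_2}\ddot w(0)+O(\beta^4)$.
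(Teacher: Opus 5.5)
Your proposal is correct and follows the paper's proof. Both use the same lower-triangular linearization at the trivial solution, the same symbol $m(\xi)=\Bs\xi^2-\xi\coth\xi+F^{-2}$ with the lower bound coming from $\xi\coth\xi\le 1+\xi^2/3$, and then the implicit function theorem with the asymptotics obtained by implicit differentiation. The only differences are minor. For the H\"older mapping properties, the paper just cites that $m(D)^{-1}$ lies in the symbol class $S^{-2}$ and is therefore bounded on Besov spaces, whereas you use Schauder estimates together with an exponentially decaying kernel. Your explicit checks of the $\beta\mapsto-\beta$ symmetry in (b) and of the expansions in (c)--(d) fill in what the paper leaves to ``implicit differentiation.''
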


\begin{proof}
A direct calculation shows that the linearization of $\mathscr{F}$ at the trivial solution is
\begin{equation}
	D_{(w,\varkappa)}\mathscr{F}(0,0;0,F_0,\Bs^0)=\begin{pmatrix}
		\left(-\Bs^0\partial^2_{{\zeta_1}}-\partial_{{\zeta_2}}+\frac{1}{F_0^2}\right)|_\Gamma&0\\
		\frac{1}{\pi}\partial^2_{{\zeta_1}}\big|_{\zeta=0}&-1\\
	\end{pmatrix}.
\end{equation}
Note that the lower left entry is bounded, the lower right entry is invertible, and the upper left entry can be seen as $m(D)\circ\tr$, where $\tr\colon\scrW \cap H^{k+\frac12}(\Omega)\to C_{0,\even}^{k+\alpha}(\Gamma) \cap H^{k}(\Gamma)$ is the trace operator on the boundary $\Gamma$ and $m(D)$ is the Fourier multiplier with symbol
	\begin{equation}
		m(\xi)\colonequals \Bs^0 |\xi|^2-|\xi|\coth{|\xi|}+\frac{1}{F_0^2}.
	\end{equation}
The second term comes from the Dirichlet--Neumann operator on the strip $\Omega_+$, keeping in mind that odd symmetry in $\zeta_2$ effectively imposes a homogeneous Dirichlet condition on the real axis.

It is routine to check that $m$ is in the symbol class $S^2$, that is 
\[
	\abs{\partial_\xi^\ell m(\xi)} \lesssim_\ell (1+\abs{\xi})^{2-\ell} \qquad \textrm{for all } \ell \geq 1, ~\xi \in \mathbb{R}.
\]
From the elementary inequality $|\xi|\coth{|\xi|} \leq 1 + |\xi|^2/3$, it follows that
\begin{equation}\label{eq:m-lower-bound}
	m(\xi) \;\geq\; \left(\Bs^0-\tfrac13\right)\xi^2 + \frac{1}{F_0^2}-1 \gtrsim 1+\xi^2,
\end{equation}
where we have used the assumptions on $\Bs^0$ and $F_0$. Thus, $m$ is uniformly positive, and, therefore, the Fourier multiplier $m(D)^{-1}$ with symbol $m(\xi)^{-1}$ is well-defined.  From the lower bound \eqref{eq:m-lower-bound} and repeated applications of the Leibniz rule, one can readily verify that $m(D)^{-1} \in S^{-2}$. By the boundedness of $S^{-2}$ multipliers on Besov spaces --- and hence Sobolev and H\"older spaces --- we have that
\[
	m(D) \colon C^{k+\alpha}_{0,\even}(\Gamma)\cap H^{k}(\Gamma)\to C^{k-2+\alpha}_{0,\even}(\Gamma)\cap H^{k-2}(\Gamma) \quad \textrm{is an isomorphism.}
\]
Also note that $m(\xi)$ is even and hence $m(D)$ and $m(D)^{-1}$ preserve evenness in $\zeta_1$.

The trace operator $\tr\colon \scrW\cap H^{k+\frac{1}{2}}(\Omega) \to C^{k+\alpha}_{0,\even}(\Gamma) \cap H^{k}(\Gamma)$ is also an isomorphism with the inverse is given by harmonic extension into $\Omega_+$ and then odd reflection over $\{\zeta_2 = 0\}$. 

Hence, an application of the (real-analytic) implicit function at the trivial solution $(0,0;0,F_0,\Bs^0)$ shows the existence of $\cmconf_\loc$. Finally, the asymptotic expansions in \eqref{eq:w-kappa-asymptotics} can be founded by implicit differentiation, while the asymptotic expansions in~\eqref{eq:gamma-asymptotics} and \eqref{eq:b-asymptotics} are obtained by differentiating \eqref{eq:gamma-relation} and \eqref{eq:b-beta-relation}.
\end{proof}

\section{Steady wave-borne hollow vortices}
\label{steady hollow vortex section}

In this section, we turn to the proof of Theorem~\ref{intro existence hollow vortex theorem} on the existence of wave-borne vortex patches. The basic strategy is to repeat the vortex injection construction from the previous section, but simultaneously desingularize the point vortex into a hollow vortex using the general approach of~\cite{chen2025vortex,chen2026desingularization}. As discussed above, a new scaling and considerable care is needed to address the singularity arising from the curvature term in the dynamic condition when $\Bv > 0$. We therefore focus on this case, and outline how $\Bv = 0$ is treated only at the end of the section.

\subsection{Reformulations}
We begin by reformulating the problem, first as a local equation in terms of conformal variables, then as a nonlocal problem that encapsulates both the wave-borne point vortex and hollow vortex problems. Throughout the remainder of the section, we fix a regularity index $k \geq 2$ and Hölder exponent $\alpha \in (0,1)$.

\subsubsection*{Complex variable formulation}
Suppose there is a single hollow vortex transported by the wave, meaning that the vortex core $\fluidV \subset\subset \fluidD$ is a nonempty, simply connected open set. We again identify the real variable $x \in \mathbb{R}^2$ with $z = z_1 + i z_2 \in \mathbb{C}$ in the obvious way. As we saw for the wave-borne point vortex problem, the incompressibility and irrotationality of $\mathbf{u}$ in $\fluidD$ can be expressed in terms of the complex velocity field $u_1 - i u_2$ as the requirement that 
\begin{equation}\label{hollow vortex conformal holomorphic}
	u_1 - i u_2 \quad\text{is holomorphic in}\quad\mathscr{D}\setminus \fluidV,\\
\end{equation}
and the kinematic boundary condition~\eqref{intro steady kinematic condition} is
\begin{equation}\label{hollow vortex conformal kinematic}
	u_1 + i u_2 \quad\text{is purely tangential on}\quad\mathscr{S}\cup\mathscr{B}\cup\partial\mathscr{V}.\\
\end{equation}
Recall also that the Bernoulli boundary condition is imposed on both the air-water interface \eqref{intro steady dynamic condition surface} and vortex boundary \eqref{intro steady dynamic condition core}. In the conformal $z$-plane, these take the form
\begin{equation}\label{conformal Bernoulli condition local}
\left\{
\begin{aligned}
	\Bs \kappa + \frac{1}{2}\left(u_1^2+u_2^2 \right) + \frac{1}{F^2}y & = \frac{1}{2} & \qquad &  \textrm{on } \fluidS\\
	\Bv \kappa + \frac{1}{2}\left(u_1^2+u_2^2\right) + \frac{1}{F^2}y & = q & \qquad & \textrm{on } \partial\fluidV,
\end{aligned}
\right.
\end{equation}
Finally, the requirement in~\eqref{intro steady vorticity assumptions} that the circulation about $\fluidV$ is $\gamma$ can be written as
\begin{equation}
\label{hollow complex circulation condition}
	\gamma = \int_{\partial\fluidV}(u_1 - iu_2 ) \, dz.\\
\end{equation}

\begin{figure}
	\centering
	\includegraphics[width=0.9\linewidth]{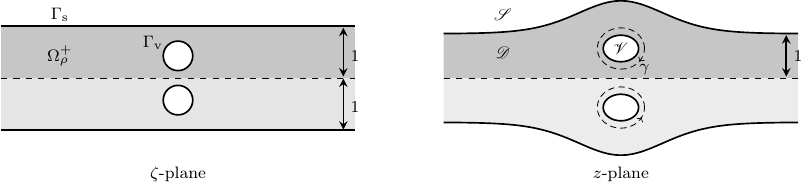}
	\caption{The conformal domain in the $\zeta$-plane is depicted on the right, with the corresponding (doubled) physical domain in the complex $z$-pane depicted on the right..}
\end{figure} 

\subsubsection*{Nonlocal formulation}

Following the same general strategy as for wave-borne point vortices, the next step is to pull the system back to a canonical domain in the complex $\zeta$-plane using an a priori unknown conformal mapping $f = f(\zeta)$. For wave-borne hollow vortices, however, the domain is necessarily multiply connected. We expect that for small hollow vortices, the vortex boundaries will be approximately circular, as this agrees with the streamline pattern near the starting point vortex. It is therefore natural to take 
	\begin{equation*}
	 \Omega _{\rho,\beta} \colonequals 
	\begin{cases}
		\Omega\setminus\{i\beta,-i\beta\},&\rho = 0\\
		\Omega\setminus\overline{B_{|\rho|}(i\beta)\cup B_{|\rho|}(-i\beta)},& 0 < |\rho| \ll 1,
	\end{cases}
	\end{equation*}	
for $0 < |\rho| \ll 1$ and $\beta \in [-1,1]$, where $\Omega$ is, as before, the doubled strip 
\[
	\Omega \colonequals \{ \zeta \in \mathbb{C} : |\realpart{\zeta} | < 1 \}.
\]
We call $\rho$ the conformal radius of the vortex core and $\beta$ its conformal altitude.  The physical domain will then be $\fluidD = f(\Omega_{\rho,\beta}^+)$, with $\Omega_{\rho,\beta}^+ = \Omega_{\rho,\beta} \cap \{ \realpart{\zeta} > 0 \}$. Requiring that $f+i$ is real-on-real ensures that the $\zeta_1$ axis is mapped to the bed $\fluidB$.  The surface $\fluidS$ is likewise the image under $f$ of $\Gammas \colonequals  \{\eta = 1\}$, while the boundary of the upper vortex will be given by $\partial\mathscr{V} = f(\Gammav)$, where $\Gammav \colonequals  \partial B_\rho(i\beta)$. 

In fact, the conformal radius $\rho$ will serve as the bifurcation parameter in the desingularization process, which is why we allow it to be negative. We must have that $|\beta| > |\rho|$ in order to have a physical solution for which the vortex core and phantom vortex core are not in contact. We will therefore mandate that $\beta$ is itself dependent on $\rho$ via
\[
	\beta = \lambda \rho,
\]
where $\lambda > 1$ is fixed. For brevity, let $\Omega_\rho$ denote the corresponding conformal domain $\Omega_{\rho, \lambda\rho}$.

The canonical domain $\Omega_{\rho}$ is still $\rho$-dependent, and hence not an appropriate setting for the bifurcation argument. Following the general approach in~\cite{chen2025vortex}, we therefore reformulate once more in terms of (normalized) boundary traces, which will ultimately yield a nonlocal problem defined on a fixed domain. 
 
The conformal mapping $f$ is constructed using the ansatz
\be
\label{hollow f ansatz}
	f = -i + \id + \fs + \fv,
\ee
where $\fs \in C^{k + \alpha}(\overline{\Omega};\mathbb{C})$ is holomorphic in the entire strip and accounts for the shape of the air-water interface, and $\fv \in C^{k+\alpha}(\overline{\Omega_\rho}; \mathbb{C})$ is a conformal mapping that gives the geometry of the vortex boundaries. Just like we saw in Section~\ref{steady point vortex section}, the mapping $\fs$ can be reconstructed from the trace of its imaginary part on the upper boundary $\Gammas$. Similarly, one can express $\fv$ in terms of its trace on $\Gammav$ using a layer-potential operator.  However, because we are varying $\rho$, and hence $\Gammav$, it is more convenient to work with objects defined on the topologically equivalent set $\mathbb{T}$. With that in mind, we set
	$$f^{\text{v}}(\zeta) \colonequals  \rho^2\mathcal{Z}^{\rho}[\mu](\zeta),$$
where $\mu \in C^{k+\alpha}(\mathbb{T})$ is real-valued density and $\mathcal{Z}^{\rho}$ is the layer potential
\be
\label{definition layer-potential operator}
	\mathcal{Z}^{\rho}[\mu](\zeta) \colonequals  \frac{1}{2\pi i}\int_{\mathbb{T}}\left(\frac{\mu(\sigma)}{\rho\sigma + i\beta - \zeta} + \frac{\mu(\overline{\sigma})}{\rho\sigma - i\beta - \zeta}\right)\rho d\sigma.
\ee
Note that 
\[
	\rho \mapsto \mathcal{Z}^\rho \in \Lin{(C^{\ell+\alpha}(\mathbb{T};\mathbb{C})}, \, C^{k+\alpha}(\overline{\Omega_\rho};\mathbb{C}))
\]
is real-analytic in a neighborhood of $0$.  

As in the point vortex problem, requiring that $i+f$ is real-on-real and imaginary-on-imaginary ensures that the real axis is mapped to the bed and the domain has the desired even symmetry with respect to the $z_2$-axis. We therefore as that both $\fs$ and $\fv$ are likewise real-on-real and imaginary-on-imaginary. For $\fs$, this can be accomplished by working with the new unknown $\ws \colonequals \imagpart{\fs}$, which must then satisfy $\ws \in C_{0,\even}^{k+\alpha}(\overline{\Omega})$. To enforce the desired symmetry properties for $\fv$ requires restricting the densities to the a subspace. With that in mind, we define for any $\ell \geq 0$, the spaces
\be
\label{definition Cii Cri}
\begin{aligned}
	C^{\ell+\alpha}_{\imagimag}(\mathbb{T}) & \colonequals  \{\varphi \in C^{\ell + \alpha}(\mathbb{T};\mathbb{R}): i^{m}\hat{\varphi}_m\in i\mathbb{R}\text{ for all }m\in\mathbb{Z}\} \\
	C^{\ell+\alpha}_{\realimag}(\mathbb{T}) & \colonequals  \{\varphi \in C^{\ell + \alpha}(\mathbb{T};\mathbb{R}): i^{m}\hat{\varphi}_m\in \mathbb{R}\text{ for all }m\in\mathbb{Z}\}. 
\end{aligned}
\ee
When the density $\mu$ belongs to $C_{\imagimag}^{k+\alpha}$, then the corresponding mapping $\fv$ is real-on-real and imaginary-on-imaginary; see~\cite[ Section 5]{chen2025vortex}.  We additionally make the normalizing assumption that $\mu \in \mathring{C}^{k+\alpha}(\mathbb{T})$, that is $\hat\mu_0 = 0$. As we verify later, the nonlinear operator representing the dynamic condition on $\Gammav$ will eventually take values in $C_{\realimag}^\alpha$.

The pullback of the relative velocity field to $\Omega_\rho$ can be expressed quite readily through a complex potential. The leading-order part is given by the corresponding relative potential for the point vortex problem. For a cotranslating pair of point vortices in a channel with centers at $\pm i \beta$ and with corresponding strengths $\pm \gamma$, this takes the form
\be
\label{hollow W0 formula}
	W_0 = W_0(\zeta;\gamma,\beta) \colonequals  \frac{\gamma}{2\pi i}\log\left(\frac{\sinh\left(\frac{\pi}{2}(\zeta - i\beta)\right)}{\sinh\left(\frac{\pi}{2}(\zeta + i\beta)\right)}\right) -\zeta.
\ee

The lemma below details the next higher-order terms in the expansion of the complex relative potential on $\Omega_\rho$, which will be needed later in the desingularization argument. It is reprised from \cite[Lemma 5.1]{chen2025vortex}. Note that the relative potential is agnostic to the effects of surface tension, and hence the result follows exactly as in the gravity wave case. 

\begin{lemma}[Complex relative potential]
\label{hollow complex potential lemma}
There exists real-analytic mappings 
\[
	(\gamma_1, \rho) \mapsto \nu(\placeholder; \gamma_1, \rho) \in \mathring{C}_{\imagimag}^{k+\alpha}(\mathbb{T}; \mathbb{R}) \quad \textrm{and} \quad
	(\gamma_1, \rho) \mapsto \Ws(\placeholder; \gamma_1, \rho) \in C_0^{k+\alpha}(\overline{\Omega}; \mathbb{C})
\]
defined in a neighborhood of $(0,0) \in \mathbb{R}^2$ such that the complex relative potential
 \be
 \label{hollow vortex complex potential form}
 	W^\rho(\placeholder; \gamma_1) \colonequals W_0(\placeholder; \rho \gamma_1,\rho\lambda) + \rho\mathcal{Z}^{\rho}[\nu] + \rho^2 \Ws
\ee
satisfies for all $|\rho| > 0$, 
\begin{enumerate}[label=\rm(\alph*)]
	\item $\partial_\zeta W^\rho$ is holomorphic on $\Omega_\rho$ with $\partial_\zeta W^\rho \in C_\bdd^{k-1 + \alpha}(\overline{\Omega_{\rho}})$.
	\item $W^\rho - W_0$ is single-valued and $\imagpart{W^\rho}$ is constant on each component of $\partial\Omega_{\rho}$.
	\item $\partial_\zeta W^\rho(\zeta) \to -1$ as $\zeta\to \pm\infty$ in $\Omega_{\rho}$.
	\item $\partial_\zeta W^\rho$ is real on real and real on imaginary.
\end{enumerate}
\end{lemma}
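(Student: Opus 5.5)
We will construct the correction terms $\nu$ and $\Ws$ by solving a linear boundary-value problem for $\imagpart W^\rho$ on $\Omega_\rho$, and then use the implicit function theorem in $\rho$ to obtain real-analyticity. The leading term $W_0(\placeholder;\rho\gamma_1,\rho\lambda)$ already has $\imagpart W_0$ constant on $\{\zeta_2=\pm1\}$ and on $\{\zeta_2=0\}$. It satisfies (c) and (d), and $\partial_\zeta W_0$ has simple poles at $\pm i\rho\lambda$. The only defect is that $\imagpart W_0$ is not constant on the circles $\partial B_\rho(\pm i\rho\lambda)$.

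\textbf{Decomposition near the circle.} We split $W_0$ near $i\rho\lambda$ into its logarithmic singularity $\frac{\rho\gamma_1}{2\pi i}\log(\zeta-i\rho\lambda)$ plus a holomorphic remainder. On $\zeta=i\rho\lambda+\rho\tau$ the logarithmic part has imaginary part $-\frac{\rho\gamma_1}{2\pi}\log\rho$, which is constant in $\tau$. Rescaling, the remainder restricted to the circle is $\rho\,h(\tau;\gamma_1,\rho)$ with $h$ real-analytic in $(\gamma_1,\rho)$ with values in $C^{k+\alpha}(\mathbb T)$. This is because $\zeta\mapsto -\zeta$ evaluated on the circle is $O(\rho)$, and $\log\sinh(\frac\pi2(\zeta+i\beta))$ minus $\log(\zeta+i\beta)$ is analytic. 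The extra terms $\log\sinh/(\cdots)$ normalized by $\log\rho$ are constants, and the $\log(2i\lambda+\tau)$ piece is analytic in $\tau\in\mathbb T$ because $\lambda>1$. The real-on-real and imaginary-axis symmetries of $W_0$ force the imaginary part of $h$ to have the $\imagimag/\realimag$ parity pattern from \eqref{definition Cii Cri}.

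\textbf{Correction by layer potential and strip term.} We seek $\nu$ and $\Ws$ so that $\imagpart(\mathcal Z^\rho[\nu]+\rho\Ws)$ cancels $\imagpart h$ modulo constants on $\Gammav$, and so that $\imagpart\Ws$ cancels the trace of $\imagpart\mathcal Z^\rho[\nu]$ on $\Gammas$ modulo constants. The imaginary part on $\{\zeta_2=0\}$ is handled automatically by the reflected term in \eqref{definition layer-potential operator} together with $\Ws$ being real-on-real. At $\rho=0$ the rescaled layer potential on $\mathbb T$ reduces to the Cauchy integral over the unit circle plus the image at distance $2\lambda$, which is a compact perturbation. We will use, as in \cite[Section 5]{chen2025vortex}, that the map $\nu\mapsto\imagpart\mathcal Z^0[\nu]|_{\mathbb T}$ is an isomorphism $\mathring C^{k+\alpha}_{\imagimag}\to\mathring C^{k+\alpha}_{\realimag}$ modulo constants: it acts on the Fourier modes with $m\neq0$ by the projection onto negative or positive frequencies, and parity pairs $\hat\nu_m$ with $\hat\nu_{-m}$.

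$\Ws$ is obtained as the holomorphic function in the strip whose imaginary part solves a Dirichlet problem with decaying data. That data is analytic in $\rho$, since on $\Gammas$ the kernel is smooth uniformly for small $\rho$. We then set up the coupled map
\[
(\nu,\Ws,\gamma_1,\rho)\mapsto(\text{vortex-boundary residual},\ \text{surface residual}).
\]
At $\rho=0$ this map is triangular with invertible diagonal. The analytic implicit function theorem then gives $\nu$ and $\Ws$ real-analytic in $(\gamma_1,\rho)$. Properties (a)--(d) follow: holomorphy and boundedness come from the mapping properties of $\mathcal Z^\rho$; single-valuedness holds because the corrections are single-valued, $\nu$ having mean zero; the decay of $\Ws\in C_0$ and of $\mathcal Z^\rho[\nu]$ gives (c); and the parity gives (d).

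\textbf{Main obstacle.} The main obstacle is the uniformity in $\rho$ near $0$. The circles shrink toward one another and toward the bed at the same rate, so every operator must be rescaled to $\mathbb T$ and shown to extend analytically through $\rho=0$. The delicate part is verifying that the rescaled reflected-image term and the strip-induced contributions remain analytic and preserve the $\imagimag$ and $\realimag$ classes. Since the lemma is reprised from \cite[Lemma 5.1]{chen2025vortex} and involves no surface tension, we would follow that argument verbatim.
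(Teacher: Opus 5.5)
Your construction (strip off the logarithm, correct the vortex boundary with $\mathcal{Z}^\rho$ and the surface with a strip function, then apply the analytic implicit function theorem) is what the paper relies on. The paper gives no argument of its own and simply defers to \cite[Lemma 5.1]{chen2025vortex}.

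The gap is in the one step that actually changes under this paper's scaling: invertibility at $\rho=0$ of the vortex-boundary block. Because $\beta=\lambda\rho$, the rescaled operator is exactly independent of $\rho$. For every $\rho\neq0$,
\[
\mathcal{Z}^\rho_{\mathrm{v}}\nu(\tau)=\mathcal{C}\nu(\tau)-\sum_{n\geq1}\hat\nu_n(\tau+2i\lambda)^{-n},
\]
so the image-circle term is still present, with $O(1)$ size, at $\rho=0$.

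Your Fourier description covers only $\mathcal{C}$. For real $\nu$, $\imagpart\mathcal{C}$ is the Hilbert-type multiplier $-\tfrac{i}{2}\operatorname{sgn} m$ rather than a projection, and it is an isomorphism $\mathring{C}^{k+\alpha}_{\imagimag}(\mathbb{T})\to P_{>0}C^{k+\alpha}_{\realimag}(\mathbb{T})$. The image term mixes all modes and is not small for $\lambda$ near $1$. Calling it compact therefore gives only Fredholm index zero, and injectivity still has to be proved.

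Following \cite{chen2025vortex} verbatim does not supply this. There the vortex is desingularized at a fixed altitude, the image circle recedes to infinity after rescaling, and the leading operator is the bare Cauchy multiplier. The paper's one-line citation passes over the same point. Conversely, the analyticity in $\rho$ of the rescaled image term, which you flag as the delicate part, is trivial here.

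Injectivity does hold, and this is the missing idea.
\begin{itemize}
\item Put $Z(w)\colonequals\mathcal{Z}^\rho[\nu](\rho w+i\rho\lambda)$, with $\mathbb{D}$ the unit disk. Then $Z$ is holomorphic off $\partial\mathbb{D}\cup(\partial\mathbb{D}-2i\lambda)$, is $O(1/w)$ at infinity, and satisfies $\overline{Z(\bar w-2i\lambda)}=Z(w)$.
\item Suppose $\imagpart Z\equiv c$ on $\partial\mathbb{D}$ (exterior trace). The symmetry gives $\imagpart Z\equiv-c$ on the image circle.
\item Uniqueness for the bounded exterior Dirichlet problem then gives $\imagpart Z=c\,u$, where $u$ is the bounded harmonic function equal to $\pm1$ on the two circles.
\item Single-valuedness of $\realpart Z$ forces $\oint_{\partial\mathbb{D}}\partial_n\imagpart Z\,ds=0$. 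By the strong maximum principle and Hopf's lemma, $u$ has nonzero flux through $\partial\mathbb{D}$. Hence $c=0$ and $Z\equiv0$ off the disks.
\item The Plemelj jump relation now makes $\nu$ the real boundary trace of $Z|_{\mathbb{D}}$, which is holomorphic in $\mathbb{D}$ because $\lambda>1$. So $\nu$ is constant, hence $0$.
\end{itemize}
With this, your triangular scheme closes.

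One smaller caveat concerns the surface correction. The strip Dirichlet problem with data $g=-\rho^{-1}\imagpart\mathcal{Z}^\rho_{\mathrm{s}}\nu$ gives $\imagpart\Ws$ and $\partial_\zeta\Ws$ that decay. However, the limits of $\realpart\Ws$ at $\pm\infty$ differ by $\int_{\mathbb{R}}g\,d\zeta_1=-2\pi\hat\nu_1$. This is nonzero near $(\gamma_1,\rho)=(0,0)$; it is the blockage effect of a dipole in a channel. What your construction actually yields is therefore $\partial_\zeta\Ws\in C_0$ rather than $\Ws\in C_0$. That is all that (a)--(d) use.
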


One should understand the ansatz in~\eqref{hollow vortex complex potential form} as saying that $W^\rho$ is given by $W_0$ with a higher-order correction (represented by the layer-potential term) for the shape of the vortex, and another higher-order correction (represented by $\Ws$) for the deformation of the air-water interface.

Furthermore, in order avoid the singularity that would occur at $\rho = 0$ in $W^0$ for arbitrary vortex strength, we insist that $\gamma$ must be of the form $\gamma = \gamma_1\rho$, and we will henceforth treat $\gamma_1$ as the unknown.

Thanks to Lemma~\ref{hollow complex potential lemma}, we have that for any conformal mapping $f$ of the form~\eqref{hollow f ansatz} with the above stated symmetry properties, using the complex potential~\eqref{hollow vortex complex potential form} ensures that the corresponding relative velocity field is irrotational and incompressible~\eqref{hollow vortex conformal holomorphic}, and it satisfies the kinematic condition~\eqref{hollow vortex conformal kinematic} as well as the circulation condition~\eqref{hollow complex circulation condition}. Thus, all that remains in order to have a solution of the wave-borne hollow vortex problem is the dynamic condition~\eqref{conformal Bernoulli condition local}. 

With that in mind, we introduce notation for the  restriction to $\Gammas$ and $\Gammav$ of $\partial_\zeta W^\rho$: for each $\rho \ll 1$, define
\begin{equation*}
	\begin{aligned}
	&a_{\text{s}} = a_{\text{s}}(\xi_1;\gamma_1,\rho) \colonequals  |\partial_\zeta W^\rho(\xi_1 + i;\gamma_1)| \in C_\bdd^{k-1+\alpha}(\mathbb{R}), \\
	&a_{\text{v}} = a_{\text{s}}(\tau;\gamma_1,\rho)  \colonequals  |\partial_\zeta W^\rho(\rho\tau + i\beta;\gamma_1)| \in C^{k-1+\alpha}(\mathbb{T}). \\
	\end{aligned}
\end{equation*}
Likewise, to evaluate $\partial_\zeta f$ on the free boundary, we make use of the layer-potential operators
	\begin{equation*}
	\begin{split}
	\mathcal{Z}^{\rho}_{\text{s}}\mu&\colonequals  \mathcal{Z}^{\rho}[\mu](\placeholder + i) \in C_\bdd^{1+\alpha}(\mathbb{R}; \mathbb{C}) \\
	\mathcal{Z}^{\rho}_{\text{v}}\mu &\colonequals  \mathcal{Z}^{\rho}[\mu](\rho\placeholder + i\rho\lambda) \in C^{1+\alpha}(\mathbb{T}; \mathbb{C}).
	\end{split}
	\end{equation*}
It is important to note that the layer-potential operator $\mathcal{Z}^\rho$ and the restricted layer-potential operators commute differently with derivatives. In particular,
\[
	\partial_\zeta \mathcal{Z}^\rho\mu = \frac{1}{\rho} \mathcal{Z}^\rho \mu^\prime, \qquad \partial_{\zeta_1} \mathcal{Z}_{\mathrm{s}}^\rho\mu = \mathcal{Z}_{\mathrm{s}}^\rho\mu^\prime , \qquad \partial_\tau \mathcal{Z}_{\mathrm{v}}^\rho\mu = \mathcal{Z}_{\mathrm{v}}^\rho\mu^\prime,
\]	
as can be easily verified from~\eqref{definition layer-potential operator}.

Then, the dynamic condition~\eqref{conformal Bernoulli condition local} pulled back to the conformal domain takes the form
	\begin{equation}
	\label{hollow nonlocal equation}
	\left\{
	\begin{aligned}
	\frac{1}{2}\frac{\abs{a_{\text{s}}}^2}{\abs{\partial_\zeta f}^2} + \Bs\Ks(w,\mu,\rho) + \frac{1}{F^2}\imagpart{f} &= \frac12&\qquad &\text{on } \Gammas \\
	\frac{1}{2}\frac{\abs{a_{\text{v}}}^2}{\abs{\partial_\zeta f}^2} + \Bv\Kv(w, \mu, \rho) + \frac{1}{F^2}\imagpart{f} & = q& \qquad & \text{on } \Gammav,\\
	\end{aligned} \right.
	\end{equation}
where for brevity we are writing $f$, which is now viewed as a function of $(\mu, w, \rho)$. Similarly, $\Ks$ and $\Kv$ correspond to the curvature along the air-water interface and vortex core boundary, respectively, which are the (nonlinear) functions of $(\mu,\rho)$ given by
\be
\label{definition Ks Kv}
\begin{aligned}
 	\Ks(w,\mu,\rho) & = \Ks(w,\mu,\rho)(\zeta_1) \colonequals \frac{\imagpart{\left( \partial_\zeta f(\zeta_1+i)  \overline{\partial_\zeta^2 f(\zeta_1+i} )\right)}}{|\partial_\zeta f(\zeta_1+i)|^3} \\
	\Kv(w,\mu,\rho) & = \Kv(w,\mu,\rho)(\tau) \colonequals \frac{1}{\rho\abs{ \partial_\zeta f(\rho \tau + i \rho \lambda)}} + \frac{\realpart{\left(\tau \partial_\zeta f(\rho \tau+ i \rho \lambda) \overline{\partial_\zeta^2 f(\rho \tau+ i \rho \lambda) } \right)}}{\abs{ \partial_\zeta f(\rho \tau + i \rho \lambda)}^3}.
\end{aligned}
\ee

\subsection{Function spaces and the abstract operator equation}

Next, we fix the functional analytic setting and rewrite the nonlocal nonlinear problem~\eqref{hollow nonlocal equation} as an abstract operator equation to which we will eventually apply the implicit function theorem. This is actually a somewhat delicate task, as we notice that the curvature term on the vortex core~\eqref{definition Ks Kv} is unbounded in the limit $\rho \to 0$. In some sense, the heart of the desingularization argument is in correctly defining a nonlinear functional that is real analytic in a neighborhood of $\rho = 0$ but whose zero-set includes all solutions to~\eqref{hollow nonlocal equation}.

As discussed so far, our unknowns are $w$ and the density $\mu$, along with the circulation $\gamma_1$ and Bernoulli constant $q$. In fact, $q$ will be unbounded for $\rho$ near $0$, and so it will also be necessary to use a ``normalized Bernoulli constant'' $Q$, as we explain shortly below. The domain of the abstract operator equation is then a subset of the Banach space
\[
	u \colonequals (w,\mu,\gamma_1,Q) \in \mathscr{X} \colonequals \mathscr{W} \times \mathring{C}_{\imagimag}^{k+\alpha}(\mathbb{T}) \times \mathbb{R}^2. 
\]
Note that here, for convenience, we are bundling the unknowns together. Let 
\begin{align*}
	\mathscr{O} \colonequals  \Big\{ \{(u, \rho)\in\mathscr{X}\times\mathbb{R}:
		&\quad  (\partial_{\zeta_1} w)^2 + (1 + \partial_{\zeta_2} w)^2, ~|\partial_\zeta f|^2 > 0~ \textrm{in } \overline{\Omega},~ \lambda \rho < 1 	\Big\},
\end{align*}
and note that for $(u,\rho) \in \mathscr{O}$, both the mapping $-i+ \id + \fs$ and the full mapping $f = -i+ \id + \fs + \fv$ are conformal on $\overline{\Omega}$. While one could only impose the latter condition, it will turn out to be more convenient to have that both hold. The main point is that $\mathscr{O}$ is an open neighborhood of the origin in $\mathscr{X} \times \mathbb{R}$.

Now, let us regroup terms in the dynamic condition on $\Gammav$ as
\be
\label{hollow vortex core Bernoulli}
	\frac{1}{2}\left(\frac{a_{\text{v}}^2}{|\partial_\zeta f|^2} - 1\right) + B_{\text{v}}\left(\frac{1}{\rho|\partial_\zeta f|} + \frac{\realpart\left(\tau \partial_\zeta f\overline{\partial_\zeta^2 f}\right)}{|\partial_\zeta f|^3}\right) + \frac{1}{F^2}\imagpart{f} - q = 0.
\ee
The basic strategy is to note that while $1/(\rho |\partial_\zeta f|)$ is expected to be $O(\rho^{-1})$ as $\rho \searrow 0$, it is constant to leading order as $f+i$ is a perturbation of the identity. Thus, the singular part can be absorbed into the Bernoulli constant $q$. This motivates considering the nonlinear operator
\[
	R = R(w,\mu,\rho)(\tau) \colonequals  \left( \frac{1}{\rho| \partial_\zeta f |} - \frac{1}{\rho|1 + \partial_\zeta \fs(0)|} \right) \Big|_{\zeta = i\lambda \rho + \rho \tau} \qquad \textrm{for } \tau \in \mathbb{T}, 
\]
and defining the normalized Bernoulli constant $Q$ by
\be
\label{relationship Q and q}
	Q \colonequals  q - \frac{\Bv}{\rho|1 + \partial_\zeta \fs(0)|}.
\ee
Then the singular terms in~\eqref{hollow vortex core Bernoulli} can be written as
\[
	\Bv R(u;\rho) - Q = \frac{\Bv}{\rho|\partial_\zeta f|} - q.
\]
Notice that for any $0 < |\rho| \ll 1$, $R(\placeholder; \rho)$ is real-analytic as a mapping from a neighborhood of the origin in $\mathscr{X}$ to $C^{k-1+\alpha}(\mathbb{T})$. We claim that $R$ can be extended to a real-analytic mapping defined on $\mathscr{O}$, which notably includes the section $\rho = 0$.

First observe that because $\fs$ is real-on-real and imaginary-on-imaginary by construction, its power series has the special form
\[
	\partial_\zeta \fs(\zeta) = (\partial_\zeta \fs)(0) + \zeta^2 r[w](\zeta) \colonequals (\partial_\zeta \fs)(0) + \zeta^2 \sum_{j = 1}^{\infty}\frac{(\partial_{\zeta}^{2j+1}\fs)(0)}{j!}\zeta^{2j-2}.
\]
The symmetry properties of $\fs$ imply in particular that $\partial_\zeta f(0) \in \mathbb{R}$. Thus, for $(u,\rho) \in \mathscr{O}$ sufficiently small, we have
\[
	1+\partial_\zeta \fs(0) = 1+ \partial_{\zeta_2} w(0) > 0,
\]
and hence
\begin{align*}
	 |1 + \partial_\zeta \fs(0)|^2 - |\partial_\zeta f|^2 &= |1 + \partial_\zeta \fs(0)|^2 - |1+\partial_\zeta \fs + \rho \mathcal{Z}^\rho \mu^\prime|^2 \\
	& = -2 \left| 1+ \partial_\zeta \fs(0) \right|  \realpart{ \left( \zeta^2 r + \rho \mathcal{Z}^\rho \mu^\prime\right)} -  \left| \zeta^2 r + \rho \mathcal{Z}^\rho \mu^\prime \right|^2.
\end{align*}
Using this fact, we compute that at $\zeta = \rho(\tau + i \lambda) \in \Gammav$, 
\be
\label{hollow expansion of R}
	\begin{aligned}
	R(w,\mu,\rho)(\tau) &= \frac{|1 + \partial_\zeta \fs(0)|^2 - |\partial_\zeta f|^2}{\rho|1 + \partial_\zeta \fs(0)||\partial_{\zeta}f | \left( |1+\partial_\zeta \fs(0)| + |\partial_\zeta f| \right)} \\
	&= \frac{-2 \left| 1 + \partial_\zeta \fs(0) \right| \realpart\left(\rho(\tau + i\lambda)^2r + \mathcal{Z}_{\text{v}}^{\rho}\mu^\prime \right) - \rho \abs{\rho(\tau + i\lambda)^2 r + \mathcal{Z}_{\text{v}}^{\rho}\mu^\prime}^2}{|1 + \partial_\zeta \fs(0)||\partial_{\zeta} f |(|1 + \partial_\zeta \fs(0)| + |\partial_\zeta f|)}\\
	&\equalscolon -\frac{2\realpart\left(\mathcal{Z}_{\text{v}}^{\rho}\mu^\prime + \rho(\tau + i\lambda)^2r \right)}{| \partial_{\zeta}f |(|1 + \partial_\zeta \fs(0))| + |\partial_{\zeta}f|)} + \mathcal{A}\\
	\end{aligned}
\ee
where $\mathcal{A} = \mathcal{A}(u; \rho) \colon \mathscr{O} \to C^{k-1+\alpha}(\mathbb{T})$ is the real-analytic operator defined by
\begin{align*}
	\mathcal{A}(u;\rho) & = 
	 \left\{ 
	 \begin{aligned} 
	 	- \frac{\rho\abs{\rho(\tau + i\lambda)^2r + \mathcal{Z}_{\text{v}}^{\rho}\mu^\prime}^2}{|1 + \partial_\zeta f^{\text{s}}(0)|| \partial_\zeta f|(|1 + \partial_\zeta \fs(0)| + |\partial_\zeta f|)} & \qquad |\rho| > 0 \\
		0 & \qquad \rho = 0
	\end{aligned} \right. \\
	& = 
	O\left( \rho^2 \| w \|_{C^{k-1+\alpha}} + \rho \| \mu \|_{C^{k-1+\alpha}} \right)\qquad \textrm{in } C^{k-1+\alpha}(\mathbb{T}).
\end{align*}
It is clear, then, that we can extend $R$ as a real-analytic operator defined at $\rho =0$. In particular, since $\mathcal{A}$ vanishes identically at $\rho = 0$, we see from the first term on the last line of~\eqref{hollow expansion of R} that 
	\begin{equation}
	\label{asymptotics of R}
	\begin{split}
	R(w,\mu,0) 
	&= -\frac{\realpart{\mathcal{C}\mu^\prime}}{|1 + \partial_{\zeta_2} w(0) |^2}, 
	\end{split}
	\end{equation}
where $\mathcal{C}$ is the Cauchy integral operator 
\[
	\mathcal{C}[\mu](\tau) \colonequals  \frac{1}{2\pi i}\int_{\mathbb{T}}\frac{\mu(\sigma)-\mu(\tau)}{\sigma - \tau}  d\sigma.
\]
It will be important for our later computations to note that $\mathcal{C}$ acts as a Fourier multiplier: for any integer $m$, $\mathcal{C} \tau^m = -\tau^m$ if $m < 0$ and $\mathcal{C} \tau^m = 0$ if $m \geq 0$.

Now, combining these observations, we can reframe the wave-borne hollow vortex problem as the operator equation
\be
\label{hollow vortex abstract operator equation}
	\mathscr{G}(u; \rho) = 0,
\ee
for the (real-analytic) operator $\mathscr{G} \colon \mathscr{O} \to \mathscr{Y}$ given by
\begin{equation*}
	\begin{split}
	\mathscr{G}_1(u;\rho) &\colonequals \left( \frac{1}{2}\frac{a_{\text{s}}^2}{| \partial_\zeta f|^2} + \Bs\Ks(u;\rho) + \frac{1}{F^2}\imagpart{f} - \frac12 \right) \Big|_{\zeta=\zeta_1 + i} \\
	\mathscr{G}_2(u;\rho) &\colonequals \left( \frac{1}{2}\left(\frac{a_{\text{v}}^2}{| \partial_\zeta f|^2} - 1\right) + \Bv R(u; \rho) + \Bv \frac{\realpart\left(\tau \partial_\zeta f \overline{\partial_\zeta^2 f}\right)}{|\partial_\zeta f|^3} + \frac{1}{F^2}\imagpart{f} -Q \right)\Big|_{\zeta = \rho (i\lambda + \tau)}, \\
	\end{split}
\end{equation*}
and where the codomain is $\mathscr{Y} \colonequals C_{0,\even}^{2-\ell_{\mathrm{s}}+\alpha}(\mathbb{R}) \times C_{\realimag}^{\alpha}(\mathbb{T})$, with $\ell_{\mathrm{s}} = 2$ if $\Bs > 0$ and $\ell_{\mathrm{s}} = 1$ if $\Bs = 0$.

\begin{lemma}
\label{abstract operator lemma}
The mapping $\mathscr{G}\colon\mathscr{O}\to\mathscr{Y}$ is well-defined, real analytic, and it satisfies $\mathscr{G}(0;0) = 0$. 
\end{lemma}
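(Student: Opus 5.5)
The plan is to verify the three claims separately, term by term in $\mathscr{G}_1$ and $\mathscr{G}_2$, treating the singular $\rho \to 0$ behavior as the only genuine issue.

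\emph{The basic building blocks.} Each map below is real analytic from a neighborhood of the origin in $\mathscr{X}\times\mathbb{R}$ into the indicated space, including at $\rho = 0$.
\begin{itemize}
\item $\ws \mapsto \fs$ is a bounded linear map $\mathscr{W}\to C^{k+\alpha}(\overline{\Omega};\mathbb{C})$. It is given by harmonic conjugation, normalized by the symmetry.
\item $(\mu,\rho)\mapsto \rho^2\mathcal{Z}^\rho[\mu]$ is analytic, by the stated analyticity of $\rho\mapsto\mathcal{Z}^\rho$.
\item The traces are analytic in $\rho$. On $\Gammas$ we use $\mathcal{Z}^\rho_{\mathrm{s}}$. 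On $\Gammav$ we write $\partial_\zeta \fv = \rho\mathcal{Z}^\rho\mu'$, $\partial_\zeta^2\fv = \mathcal{Z}^\rho\mu''$ and compose with $\zeta = \rho(\tau+i\lambda)$, using the commutation identities. This gives $\partial_\zeta f|_{\Gammav} = 1 + \partial_\zeta\fs(\rho(\tau+i\lambda)) + \mathcal{Z}^\rho_{\mathrm{v}}\mu'$ and $\rho\,\partial_\zeta^2 f|_{\Gammav} = \rho\,\partial_\zeta^2\fs + \mathcal{Z}^\rho_{\mathrm{v}}\mu''$.
\item By Lemma~\ref{hollow complex potential lemma}, $a_{\mathrm{s}}$ and $a_{\mathrm{v}}$ are analytic in $(\gamma_1,\rho)$. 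For $a_{\mathrm{v}}$ one must check that $\partial_\zeta W_0(\rho(\tau+i\lambda);\rho\gamma_1,\rho\lambda)$ has no singularity at $\rho=0$: the pole term is $\frac{\rho\gamma_1}{2\pi i}\cdot\frac{1}{\rho\tau}$ plus a regular part, so it is $O(1)$. The layer-potential term $\rho\,\partial_\zeta\mathcal{Z}^\rho[\nu] = \mathcal{Z}^\rho\nu'$ is likewise regular.
\end{itemize}
Since $|\partial_\zeta f|^2>0$ on $\mathscr{O}$, the quotients are analytic by composition with analytic functions on $(0,\infty)$.

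\emph{The $\mathscr{G}_1$ component.} This assembles the pieces above. The curvature $\Ks$ costs two derivatives, which forces the loss $\ell_{\mathrm{s}}=2$. Decay at infinity follows because $\ws\in C_0$, $a_{\mathrm{s}}\to1$, and the layer potential decays along $\Gammas$. Evenness follows from the symmetry assumptions.

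\emph{The $\mathscr{G}_2$ component.} The $R$ term is handled by~\eqref{hollow expansion of R}: its leading term is analytic, and $\mathcal{A}$ extends analytically by zero at $\rho=0$. Here I would argue more carefully than the display, by noting that the $\rho$-divided numerator, namely $-2|1+\partial_\zeta\fs(0)|\realpart(\cdots)-\rho|\cdots|^2$, is a polynomial in analytic quantities. The curvature remainder is $\tau\,\partial_\zeta f\,\overline{\partial^2_\zeta f}$, and the regularized form $\rho\,\partial^2_\zeta f$ above shows it needs the factor $1/\rho$. It is therefore better to use the identity
\[
\frac{\realpart(\tau\,\partial_\zeta f\,\overline{\partial^2_\zeta f})}{|\partial_\zeta f|^3} = \frac{1}{\rho}\,\frac{\realpart\big(\tau\,\partial_\zeta f\,\overline{\mathcal{Z}^\rho_{\mathrm{v}}\mu''}\big)}{|\partial_\zeta f|^3} + O(1).
\]
The potentially singular piece is $\rho^{-1}\realpart(\tau\,\overline{\mathcal{Z}^\rho_{\mathrm{v}}\mu''})$ to leading order. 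I expect this to be the main obstacle. I would handle it by combining it with $R$, whose nominally $O(1/\rho)$ part was already absorbed into $Q$. Concretely, $\mathcal{Z}^\rho_{\mathrm{v}}\mu''$ at $\rho=0$ equals $\mathcal{C}\mu''$ plus $O(\rho)$. Using the Fourier multiplier description of $\mathcal{C}$, one checks the identity $\realpart(\tau\,\overline{\mathcal{C}\mu''})$ combined with $-\realpart\,\mathcal{C}\mu'$, after recalling that $\fv=\rho^2\mathcal{Z}^\rho\mu$ so that $\partial^2_\zeta\fv = \mathcal{Z}^\rho\mu''$ carries no $1/\rho$. The correct bookkeeping is then that $\partial_\zeta^2 f$ itself is $O(1)$ and the curvature term is in fact bounded. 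If instead an unbalanced $1/\rho$ survives, I would divide it out by showing that the $\rho=0$ coefficient vanishes, as was done for $R$.

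\emph{Regularity, symmetry, and the trivial solution.} $\mathscr{G}_2$ loses two derivatives from $\mu\in C^{k+\alpha}$, so it lands in $C^{\alpha}$ (this uses $k\ge2$). Membership in $C^\alpha_{\realimag}$ follows from the real-on-real and imaginary-on-imaginary symmetries: each term inherits the parity $\tau\mapsto\bar\tau$, $\tau\mapsto-\bar\tau$ that characterizes $C_{\realimag}$, as in~\cite[Section~5]{chen2025vortex}. Finally, at $u=0$, $\rho=0$ we have $f=-i+\id$, $a_{\mathrm{s}}=a_{\mathrm{v}}=1$, the curvature and $R$ terms vanish, $\imagpart f=-1+\zeta_2$ vanishes on $\Gammas$ and has zero trace at $\rho=0$ on $\Gammav$, and $Q=0$. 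Hence $\mathscr{G}(0;0)=0$.
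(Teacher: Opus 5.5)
Your route is the paper's: analyticity term by term, with $R$ regularized through \eqref{hollow expansion of R}, the kinetic-energy density kept bounded by expanding $\partial_\zeta W_0$ on $\Gammav$, and the $C^\alpha_{\realimag}$ range obtained by symmetry. In the $a_{\mathrm{v}}$ step, note that \emph{both} poles $\pm i\lambda\rho$ lie at distance $O(\rho)$ from $\Gammav$, and each contributes an $O(1)$ term; together they give $\gamma_1\theta(\tau)$, so the phantom pole is not part of a small regular remainder.

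There is, however, a slip in your traces on $\Gammav$: they drop a factor of $\rho$ on the $\mu$-terms. From $\partial_\zeta\fv=\rho\,\mathcal{Z}^\rho\mu'$ and $\partial_\zeta^2\fv=\mathcal{Z}^\rho\mu''$ one gets
\[
\partial_\zeta f|_{\Gammav}=1+\partial_\zeta\fs+\rho\,\mathcal{Z}^\rho_{\mathrm{v}}\mu', \qquad \partial_\zeta^2 f|_{\Gammav}=\partial_\zeta^2\fs+\mathcal{Z}^\rho_{\mathrm{v}}\mu''.
\]
In fact, with $\beta=\lambda\rho$, the substitution $\zeta=\rho(\tau+i\lambda)$ removes $\rho$ from the kernel of $\mathcal{Z}^\rho_{\mathrm{v}}$ altogether. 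The curvature remainder $\realpart(\tau\,\partial_\zeta f\,\overline{\partial_\zeta^2 f})/|\partial_\zeta f|^3$ is therefore manifestly analytic through $\rho=0$. This is why the paper names only the kinetic-energy density as needing care.

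Your ``main obstacle'' and the displayed identity carrying $1/\rho$ are artifacts of this slip. You do land on the right conclusion at the end, but that paragraph should be replaced by the formulas above. The fallback you offer would also fail: the would-be $\rho=0$ coefficient, $\realpart(\tau\,\overline{\mathcal{C}\mu''})$ in the paper's notation, does not vanish identically in $\mu$. For $\mu=\tau+\bar\tau$ it equals $-2\realpart\tau^4$, and in general it is precisely the $\mu''$-part of the linearization \eqref{hollow DG2 formula}.

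Your check of $\mathscr{G}(0;0)=0$ contains a false step. At $\rho=0$ the circle $\Gammav$ collapses to $\zeta=0$, where $\imagpart f=\zeta_2-1=-1$, not $0$. With $\mathscr{G}_2$ as written, every other term vanishes as you say, but $\mathscr{G}_2(0;0)=-1/F^2$. The remedy is a normalization. Replace $\imagpart f$ by $\imagpart(f+i)$ in $\mathscr{G}_2$, i.e.\ add $1/F^2$ to the right-hand side of \eqref{relationship Q and q}, just as the constant $\tfrac12$ was silently absorbed in passing from \eqref{hollow nonlocal equation} to \eqref{hollow vortex core Bernoulli}. Equivalently, take $Q=-1/F^2$ as the base point. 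Either way the Fr\'echet derivative is unchanged. The paper's one-line ``simple computation'' passes over the same constant, but the justification you give is incorrect and should be replaced by this renormalization.
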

\begin{proof}
That $\mathscr{G}_1$ is well-defined and real analytic is obvious, as there are no singular terms, so it suffices to consider only $\mathscr{G}_2$. We have already seen that $R$ is real-analytic on $\mathscr{O}$. The only term that requires any care is the kinetic energy density. Recalling the asymptotics of $W^\rho$ in~\eqref{hollow vortex complex potential form}, we first expand $a_{\text{v}}^2$ at $\rho = 0$:
	\begin{equation*}
	\begin{split}
	a_{\text{v}}^2 &= \abs{\partial_\zeta W_{0} + \mathcal{Z}_{\mathrm{v}}^{\rho}\nu^\prime + \rho^2 \partial_\zeta \Ws}^2 + O(\rho^2)\quad\text{in }C^{k-1 + \alpha}(\mathbb{T}).\\
	\end{split}
	\end{equation*}
Note that from the definition of the complex relative potential~\eqref{hollow W0 formula}, it follows that
	$$\partial_\zeta W_0(\rho\tau + i\lambda\rho) = -1+ \gamma_1\theta(\tau) + O(\rho^2)\quad\text{in }C^{k-1 + \alpha}(\mathbb{T}),$$
where
	$$\theta = \theta(\tau) \colonequals  \frac{\lambda}{\pi\tau(\tau+2i\lambda)}.$$
Thus,
	\begin{equation}
	\label{asymptotics of av}
	\begin{split}
	a_{\text{v}}^2 &= \abs{W_{\zeta}^0}^2 + 2\realpart\left(\partial_\zeta W_0 \overline{\mathcal{Z}_{\mathrm{v}}^{\rho}\nu^\prime}\right) + \abs{\mathcal{Z}_{\mathrm{v}}^{\rho}\nu^\prime}^2 + O(\rho^2)\\
	&= 1 - 2\gamma_1 \realpart\left(\theta\overline{\mathcal{Z}_{\mathrm{v}}^{\rho}\nu^\prime} +\theta\right) + \gamma_1^2 |\theta|^2 + O(\rho^2)\quad\text{in }C^{\ell + 1 + \alpha}(\mathbb{T})\\
	\end{split}
	\end{equation}
near $\rho = 0$. Thus, $\mathscr{G}_2$ is analytic as well. That its range lies in the subspace $C_{\realimag}^\alpha(\mathbb{R})$ follows as in \cite[Lemma 6.4(b)]{chen2026desingularization}. Finally, a simple computation shows that $\mathscr{G}(0;0) = 0$.
\end{proof}

\subsection{Existence of wave-borne hollow vortices}

We are now ready to prove Theorem~\ref{intro existence hollow vortex theorem}, which we first do in terms of the nonlocal formulation.

\begin{theorem}[Wave-borne hollow vortices]
\label{hollow vortex theorem}
Let the air-sea Bond number $\Bs$ and Froude number $F$ be given so that either 
\[
	\Bs > \tfrac{1}{3} \textrm{ and } 0 < F^2 < 1  \qquad \textrm{or} \qquad \Bs = 0 \textrm{ and } F^2 > 1.
\]
For any vortex core Bond number $\Bv > 0$, there exists a curve $\mathfrak{K}_\loc$ of solutions to the solitary capillary wave-borne hollow vortex problem~\eqref{hollow vortex abstract operator equation} admitting the real-analytic parameterization
	$$\mathfrak{K}_\loc \colonequals  \{(w^{\rho},\mu^{\rho},\gamma^{\rho},Q^{\rho},\rho):|\rho| \ll 1\} \subset \mathscr{G}^{-1}(0),$$
with $(w^0,\mu^0,\gamma^0,Q^0,0) = (0, 0,0,0,0).$
\end{theorem}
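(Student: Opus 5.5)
The plan is to apply the real-analytic implicit function theorem to $\mathscr{G}\colon\mathscr{O}\to\mathscr{Y}$ at $(u,\rho)=(0,0)$. Lemma~\ref{abstract operator lemma} already gives analyticity and $\mathscr{G}(0;0)=0$. What remains is to show that $L\colonequals D_u\mathscr{G}(0;0)\colon\mathscr{X}\to\mathscr{Y}$ is an isomorphism. The zero set near the origin is then a real-analytic curve $\rho\mapsto u^\rho$ with $u^0=0$, and this curve is $\mathfrak{K}_\loc$.

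To compute $L$, I set $\rho=0$ in every formula. At $\rho=0$ the layer potential $\mathcal{Z}^\rho$ is multiplied by $\rho^2$ in $\fv$, so it contributes nothing to $\mathscr{G}_1$. The vortex enters $W^\rho$ only through $\rho\gamma_1$, so $a_{\mathrm s}=1+O(\rho)$ on $\Gammas$. Consequently the first row is independent of $(\mu,\gamma_1,Q)$ and equals
\[
	\left(-\Bs\partial_{\zeta_1}^2-\partial_{\zeta_2}+\tfrac{1}{F^2}\right)\dot w\big|_{\Gamma}.
\]
This is the operator already treated in the proof of Theorem~\ref{thm:existence}, with symbol $m(\xi)=\Bs\xi^2-|\xi|\coth|\xi|+F^{-2}$. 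For $\Bs>1/3$ and $F^2<1$, the bound \eqref{eq:m-lower-bound} shows it is an isomorphism $\mathscr{W}\to C^{\alpha}_{0,\even}$. For $\Bs=0$ and $F^2>1$ the symbol is $F^{-2}-|\xi|\coth|\xi|\le F^{-2}-1<0$ and is of order one, so it is an isomorphism onto $C^{1+\alpha}_{0,\even}$; this is why $\ell_{\mathrm s}$ appears in $\mathscr{Y}$. The operator matrix is therefore block lower-triangular, and it suffices to invert the second row restricted to $(\dot\mu,\dot\gamma_1,\dot Q)$, modulo the bounded coupling through $\dot w$.

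For the second row, I use the expansions \eqref{asymptotics of R} and \eqref{asymptotics of av}. The curvature correction term is $\realpart(\tau\partial_\zeta f\overline{\partial_\zeta^2 f})/|\partial_\zeta f|^3$, and at $\rho=0$ it is, to first order, $\realpart(\tau\overline{\mathcal{C}\dot\mu''})$-type in $\dot\mu$, since $\partial_\zeta^2\fv=\mathcal{Z}^\rho\mu''$. The term $\imagpart f$ vanishes at $\zeta=0$ to first order. The kinetic term gives $-\dot\gamma_1\realpart\theta$, and I also need to check whether $\nu$ depends linearly on $\gamma_1$ at $\rho=0$. Collecting these, I expect
\[
	\dot\mu\mapsto -\Bv\realpart\mathcal{C}\dot\mu' + \Bv\realpart(\tau\,\overline{\mathcal{C}\dot\mu''}) + \cdots, \qquad \dot\gamma_1\mapsto -\realpart\theta, \qquad \dot Q\mapsto -1.
\]
I will compute all three of these on Fourier modes.

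By the multiplier property of $\mathcal{C}$, the $\dot\mu$-part acts diagonally on $\tau^{\pm m}$ with a symbol of size about $\Bv(m^2-1)$ for $m\ge 1$. This is nonzero for $m\ge2$ and is an isomorphism $\mathring C^{k+\alpha}_{\imagimag}\cap P_{\ge2}\to C^{k-2+\alpha}_{\realimag}\cap P_{\ge2}$; the regularity index must be matched to the codomain $C^\alpha$ here. The degenerate modes are handled separately. The constant mode is absorbed by $\dot Q$. Mode $m=1$ corresponds to translations of the core, and this is where $\dot\gamma_1$ must enter: I expect that $P_1\realpart\theta\ne0$, which I will verify by expanding $\theta=\lambda/(\pi\tau(\tau+2i\lambda))$ in powers of $\tau$ and using $\lambda>1$. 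Projecting the equation onto $P_1$ yields a scalar equation for $\dot\gamma_1$, and the leftover one-dimensional freedom in $P_1\dot\mu$ must be killed by the ii/ri symmetry constraints, as in~\cite{chen2026desingularization}.

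The main obstacle is exactly this mode-$1$ (and mode-$0$) bookkeeping. I need to confirm that the $\realimag$/$\imagimag$ parity leaves precisely one real condition at $m=1$, and that this condition is matched by $\dot\gamma_1$ with a coefficient that is nonzero because $\lambda>1$. If the curvature symbol at $m=1$ is not exactly zero, then the $\dot\mu$-part is invertible there too; $\dot\gamma_1$ then has to be handled by a different count, and I would recheck the linearization of $\nu$ in $\gamma_1$. Once $L$ is shown to be bijective, and hence bounded invertible by the open mapping theorem, the implicit function theorem gives $\mathfrak{K}_\loc$.
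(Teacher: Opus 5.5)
Your skeleton matches the paper's. You apply the real-analytic implicit function theorem at $(0;0)$, note that $D_u\mathscr{G}(0;0)$ is block lower-triangular, and invert the surface block through the symbol $m(\xi)$; your direct argument for $\Bs=0$, $F^2>1$ is fine where the paper cites \cite{chen2025vortex}. You also let $\dot Q$ and $\dot\gamma_1$ absorb the output modes $0$ and $1$.

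The gap is in the vortex block. You treat $D_\mu\mathscr{G}_2(0;0)$ as a diagonal multiplier with symbol $\approx\Bv(m^2-1)$ in the density's own Fourier index. You then propose to remove the resulting null direction $P_1\dot\mu$ using the $\imagimag$/$\realimag$ symmetries, and that step is false. At $m=1$ the $\imagimag$ condition only requires $\hat{\dot\mu}_1\in\mathbb{R}$, so $P_1\mathring{C}^{k+\alpha}_{\imagimag}(\mathbb{T})$ is spanned by $\tau+\tau^{-1}$ and is nontrivial. In your picture this direction would lie in the kernel of $D_u\mathscr{G}(0;0)$, and the implicit function theorem would not apply. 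The picture is also incompatible with the codomain: a nonzero real even diagonal symbol maps $C_{\imagimag}$ into $C_{\imagimag}$, and $C_{\imagimag}\cap C_{\realimag}=\{0\}$. Your fallback, in which $\dot\mu$ is invertible at mode $1$ and $\dot\gamma_1$ needs a different count, does not occur either.

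The missing idea is an index shift. Since $\mu$ is a layer-potential density with $\partial_\zeta\fv=\rho\mathcal{Z}^\rho\mu'$ and $\partial_\zeta^2\fv=\mathcal{Z}^\rho\mu''$, the linearized curvature terms are $\Bv\realpart(\tau\overline{\mathcal{C}\dot\mu''}-\mathcal{C}\dot\mu')$, as in \eqref{hollow DG2 formula}. Here $\mathcal{C}$ annihilates the nonnegative modes and each derivative lowers the index by one. So, by \eqref{hollow DG2Dmu formula}, the pair $\hat{\dot\mu}_{\pm m}$ feeds only the output modes $\pm(m+1)$ and $\pm(m+3)$. This shift by one is exactly what converts the $\imagimag$ parity into the $\realimag$ parity.

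Consequently $D_\mu\mathscr{G}_2(0;0)$ takes values in $P_{>1}C^{k-2+\alpha}_{\realimag}(\mathbb{T})$, and it is injective on $\mathring{C}^{k+\alpha}_{\imagimag}(\mathbb{T})$ by induction. $P_2$ and $P_3$ force $\hat{\dot\mu}_1=\hat{\dot\mu}_2=0$, and then $P_m$ forces $\hat{\dot\mu}_{m-1}=0$ for $m\geq 4$.

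A rigid displacement of the core (radial mode $\pm1$) is not produced by any admissible density; the core center is fixed at $i\lambda\rho$ by the parametrization. Output modes $0$ and $1$ are therefore left entirely to $(\dot\gamma_1,\dot Q)$. The paper inverts these through the triangular system $P_0\colon -\dot\gamma_1/(2\pi\lambda)-\dot Q$ and $P_1\colon -\dot\gamma_1\realpart\bigl(\tfrac{1+4\lambda^2}{4\pi\lambda^2}i\tau\bigr)$. The mode-$1$ coefficient is nonzero for every $\lambda\neq0$, so this nondegeneracy does not depend on $\lambda>1$; that hypothesis keeps the core and its mirror image apart.
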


\begin{proof}
We have already confirmed that $\mathscr{G}$ is real analytic and $(0,0)$ is in its zero-set. The idea is simply to apply the implicit function theorem there. To understand the linearization of $\mathscr{G}_1$, we begin by expanding $a_{\text{s}}^2$ at $\rho = 0$. This gives
	\begin{equation*}
	\begin{aligned}
	a_{\text{s}}^2 &= \abs{\partial_\zeta W_0}^2 + 2\realpart\left(\overline{\partial_\zeta W_{0}}\mathcal{Z}_{\text{s}}^{\rho}\nu'\right) + O\left(\rho^2\right) & \qquad\text{in }C^{k-1+ \alpha}(\mathbb{R}) \\
	&= 1 - 2\realpart\left(\mathcal{Z}_{\text{s}}^{\rho}\nu'\right) + O(\rho^2) & \qquad\text{in }C^{k-1 + \alpha}(\mathbb{R})\\
	\end{aligned}
	\end{equation*}
as $\rho \to 0$. Hence,
	\begin{equation*}
	\begin{split}
	\frac{1}{2}\frac{a_{\text{s}}^2}{|\partial_\zeta f |^2} &= \frac{1}{2| \partial_\zeta f|^2} - \frac{\realpart{\mathcal{Z}_{\text{s}}^{\rho}\nu'}}{| \partial_\zeta f|^2} + \mathscr{R},\\
	\end{split}
	\end{equation*}
where $\mathscr{R}:\mathscr{O}\to\mathscr{Y}_1$ is real analytic and satisfies
	$$\mathscr{R} = O\left(\rho^2 + \|w\|_{C^{k+\alpha}}^2\right)\quad\text{in}\quad C^{k-\ell_{\mathrm{s}} + \alpha}(\mathbb{R}).$$
From this, it follows that the Frechét derivative at $0$ is
	$$D_u\mathscr{G}_1(0,0)\begin{pmatrix}
	\dot{w}\\
	\dot{\mu}\\
	\dot{\gamma}_1\\
	\dot{Q}\\
	\end{pmatrix} = \left(-\Bs \partial_{\zeta_1}^2 \dot w  - \partial_{\zeta_1} \dot w+ \frac{1}{F^2} \dot w \right)\Big|_{\Gammas}.$$
Note that coincides the linearization of the Bernoulli condition on the air-sea interface for the wave-borne point vortex problem, indicating that the geometry of the hollow vortex only enters as a high-order effect. 

Next, we compute the linearization of the Bernoulli condition on the vortex boundary. Recall that we have already determined the leading-order form of $R$ in~\eqref{asymptotics of R}, and the leading-order forms of $a_{\mathrm{v}}$ in~\eqref{asymptotics of av}. 

Based on these earlier calculations and the symmetry properties of $f$, it is straightforward to verify that
	\begin{equation}
	\label{hollow DG2 formula}
	\begin{split}
	D_u\mathscr{G}_2(0,0)\begin{pmatrix}
	\dot{w}\\
	\dot{\mu}\\
	\dot{\gamma}_1\\
	\dot{Q}\\
	\end{pmatrix} &= -\dot{\gamma}_1 \realpart{\left(\frac{\lambda}{\pi\tau(\tau+2i\lambda)}\right)}  - \partial_{\zeta_2} \dot w(0) + \Bv \realpart\left(\tau\overline{\mathcal{C}\dot{\mu}''} - \mathcal{C}\dot{\mu}'\right) - \dot{Q}.\\
	\end{split}
	\end{equation}

The full Frechét derivative of $\mathscr{G}$ therefore is lower block triangular, and takes the form
	$$D_u\mathscr{G}(0; 0)\begin{pmatrix}
	\dot{w}\\
	\dot{\mu}\\
	\dot{\gamma}_1\\
	\dot{Q}\\
	\end{pmatrix} = \begin{pmatrix}
	 D_w\mathscr{G}_1(0;0)&0\\
	D_{w}\mathscr{G}_2(0;0)&D_{(\mu,\gamma_1,Q)}\mathscr{G}_2(0;0)\\
	\end{pmatrix}\begin{pmatrix}
	\dot{w} \\
	\dot{\mu}\\
	\dot{\gamma}_1 \\
	\dot{Q}\\
	\end{pmatrix}.$$
For the case $\Bs > 1/3$ and $0 < F^2 < 1$, we have already shown that the upper left block is invertible $\mathscr{W} \to \mathscr{Y}_1$ in the proof of Theorem~\ref{intro existence point vortex theorem}. When $\Bs = 0$ and $F^2 > 1$, its invertibility was proved in \cite[Theorem 2.2]{chen2025vortex}. The lower left block in $D_u \mathscr{G}(0;0)$ is clearly bounded, and thus it remains only to verify that the lower right block is invertible $\mathring{C}_{\imagimag}^{k+\alpha}(\mathbb{T}) \times \mathbb{R}^2 \to C_{\realimag}^{k-2+\alpha}(\mathbb{T})$.

Towards, that end, we first observe that from~\eqref{hollow DG2 formula} it follows that
\begin{align*}
	P_0 D_{(\gamma_1, Q)} \mathscr{G}_2(0;0) 
	\begin{pmatrix} 
		\dot \gamma_1 \\ \dot Q 
	\end{pmatrix} & = -\frac{\dot \gamma_1}{2\pi \lambda}  - \dot Q \\
	P_1 D_{(\gamma_1, Q)} \mathscr{G}_2(0;0) 
	\begin{pmatrix} 
		\dot \gamma_1 \\ \dot Q 
	\end{pmatrix} & = -\dot{\gamma}_1 \realpart{\left( \frac{1+4\lambda^2}{4 \pi \lambda^2} i \tau \right)}.
\end{align*}
As it is simply a bounded upper triangular operator with invertible diagonal entries, we therefore conclude that
\be
\label{isomorphism onto P<=1}
	P_{\leq 1} D_{(\gamma_1, Q)} \mathscr{G}_2(0;0) \colon \mathbb{R}^2 \to P_{\leq 1} C_{\realimag}^{k-2+\alpha}(\mathbb{T}) \quad \textrm{is an isomorphism}.
\ee
On the other hand, writing $\dot \mu$ as a power series in $\tau$, keeping in mind that the definition of $\mathring{C}_{\imagimag}^{k+\alpha}(\mathbb{T})$, gives
\[
	\dot \mu = \sum_{m \in \mathbb{Z}\setminus\{0\}} \hat{\dot\mu}_m \tau^m, \qquad i^m\hat{\dot\mu}_m \in i \mathbb{R},~\hat{\dot\mu}_{-m} = \overline{\hat{\dot\mu}_m},
\]
and so we find that

\be
\label{hollow DG2Dmu formula}
	\begin{split}
	\realpart{(\tau\overline{\mathcal{C}\dot{\mu}^{\prime\prime}} - \mathcal{C}\dot{\mu}^\prime)} &= \realpart\left(-\sum_{m = 1}^{\infty}m(m + 1)\hat{\dot{\mu}}_{m}\tau^{m + 3} + 2\sum_{m = -\infty}^{-1}m\hat{\dot{\mu}}_m\tau^{m-1}\right)\\
	&= \frac{1}{2}\left(\hat{\dot{\mu}}_{-2}\tau^{-3} + \hat{\dot{\mu}}_{-1}\tau^{-2} - \hat{\dot{\mu}}_{1}\tau^{2} - \hat{\dot{\mu}}_{2}\tau^{3}\right)\\			&\qquad - \sum_{m=1}^{\infty} \left(m(m + 1)\hat{\dot{\mu}}_{m} + (m + 2)\hat{\dot{\mu}}_{m+2}\right)\tau^{m+3}\\
	&\qquad + \sum_{m=-\infty}^{-1} \left(m(m - 1)\hat{\dot{\mu}}_{m} + (m - 2)\hat{\dot{\mu}}_{m-2}\right)\tau^{m - 3}.\\
	\end{split}
\ee
Clearly, then, the range of $D_\mu \mathscr{G}_2(0;0)$ is a subset of $P_{>1} {C}_{\realimag}^{k-2+\alpha}(\mathbb{T})$. We claim that $D_\mu \mathscr{G}_2(0;0)$ is injective, and so because it a Fourier multiplier, it is invertible $\mathring{C}_{\imagimag}^{k+\alpha} \to P_{>1} C_{\realimag}^{k-2+\alpha}$.

Suppose $\dot\mu\in\ker D_{\mu}\mathscr{G}_2(0;0)$. Then by applying the projections $P_2$ and $P_3$ to the formula~\eqref{hollow DG2Dmu formula}, we conclude that $\hat{\dot\mu}_1 = \hat{\dot\mu}_2 = 0$. Recall that $\hat{\dot\mu}_0 = 0$ by the definition of the space $\mathring{C}_{\imagimag}^{k+\alpha}$. Arguing by induction on $m$, for $m \geq 4$, we see that $P_m D_\mu \mathscr{G}_2(0;0)\dot \mu = 0$ forces $\hat{\dot\mu}_{m-1} = 0$.  Thus, 
\[
	D_\mu \mathscr{G}_2(0;0) \colon \mathring{C}_{\imagimag}^{k+\alpha}(\mathbb{T}) \to P_{>1}C_{\realimag}^{k-2+\alpha}(\mathbb{T}) \quad \textrm{is an isomorphism}.
\]
Combining this with our previous observation~\eqref{isomorphism onto P<=1} completes the proof that $D_u \mathscr{G}_2(0;0)$ is an isomorphism, and hence so is $D_u \mathscr{G}(0;0)$. Finally, applying the real-analytic implicit function theorem furnishes the existence of the real-analytic curve $\mathfrak{K}_\loc$. 
\end{proof}

Theorem~\ref{intro existence hollow vortex theorem} is now a quick corollary.

\begin{proof}[Proof of Theorem~\ref{intro existence hollow vortex theorem}]
Translating back the solutions along $\mathfrak{K}_\loc$ from Theorem~\ref{hollow vortex theorem} to the original formulation yields the family $\mathscr{K}_\loc$ from part~\ref{intro hollow curve part}. At each parameter value $\rho > 0$, the vortex boundary $\partial\fluidV^\rho$ is the image of $\partial B_\rho(i\lambda \rho)$ under the conformal mapping $f^\rho$ constructed from $(w^\rho,\mu^\rho)$ via the ansatz~\eqref{hollow f ansatz}. Since $f+i$ is an $O(\rho^2)$ perturbation of identity in $C^{k+\alpha}$, the physical vortex boundary is necessarily a polar graph, as claimed in part~\ref{intro hollow asymptotics part}. Likewise, because $\gamma^\rho = \gamma_1^\rho \rho$, by construction, we see that $\gamma^\rho = O(\rho^2)$. Finally, the fact that $q^\rho$ diverges like $1/\rho$ as $\rho \searrow 0$ is a consequence of its relation to the normalized Bernouli constant~\eqref{relationship Q and q}:
\[
	q^\rho \colonequals Q^\rho + \frac{\Bv}{\rho | 1+ \partial_{\zeta_2} w^\rho(0)|} = O(1/\rho). \qedhere
\]
\end{proof}

\section{Orbital stability}
\label{stability section}

\subsection{General stability theory}
\label{general stability theory section}
	 For the convenience of the reader, this section recalls the relaxed GSS method introduced in~\cite{varholm2020stability} that will be the main abstract tool for the proof of Theorem~\ref{intro stability theorem}. Out of necessity, we recycle some notation. 

	Because of the quasilinear structure of our problem, we work with a scale of spaces
        \[
            \Wspace \hookrightarrow \Vspace \hookrightarrow \Xspace, 
        \]
        where $\Xspace$ is a real Hilbert space, while $\Vspace$ and $\Wspace$ are reflexive real Banach spaces. The inner product on $\Xspace$ will be denoted by $(\placeholder, \placeholder)_{\Xspace}$, and the corresponding norm by $\n{\placeholder}_{\Xspace}$.  Likewise, let $\n{\placeholder}_{\Vspace}$ and $\n{\placeholder}_{\Wspace}$ be the norms for $\Vspace$ and $\Wspace$, respectively.  We write $\Xspace^*$ for the (continuous) dual of $\Xspace$, which is naturally isomorphic to $\Xspace$ via the mapping $I \colon u \in \Xspace \mapsto (u, \placeholder)_{\Xspace} \in \Xspace^*$. As usual, we identify $\Xspace^{**}$ with $\Xspace$, and likewise for $\Vspace$ and $\Wspace$.  The pairing of $\Xspace$ and $\Xspace^*$ we denote by $\jbracket{\placeholder, \placeholder}_{\Xspace^* \times \Xspace}$, while $\jbracket{ \placeholder, \placeholder }_{\Wspace^* \times \Wspace}$ is the pairing between $\Wspace^*$ and $\Wspace$; when there is no risk of confusion, we will omit the subscript.   
        
       At the lowest level of regularity is the energy space $\Xspace$, which is where the Hamiltonian system will be formulated and is the natural setting for analyzing the spectrum. In general, though, the energy functionals may not be smooth on $\Xspace$, and so we introduce the smaller space $\Vspace$. Finally, we think of $\Wspace$ as a ``well-posedness space'', with the norm coming from higher-order energy estimates used to prove that the Cauchy problem is at least locally well-posed in time. The norm on $\Wspace$ also plays the secondary role of allowing us to get control over $\Vspace$ via interpolation.  More precisely, we require the following:
        
        \begin{assumption}[Spaces]
            \label{abstract interpolation assumption}
            Let $\Xspace$, $\Vspace$, and $\Wspace$ be given as above.  Assume that there exist constants $\theta \in (0,1]$ and $C> 0$ such that
            \begin{equation}
                \label{abstract interpolation inequality}
                \n{u}_{\Vspace}^3 \leq C \n{u}_{\Xspace}^{2+\theta} \n{u}_{\Wspace}^{1-\theta}
            \end{equation}
            for all $u \in \Wspace$.
        \end{assumption}

As usual, we often wish to work with functionals defined not on the full space, but on some open subset. For traveling waves with a point vortex, there must be a positive separation between the vortex center and the air--water free boundary as well as the rigid bed.  Abstractly, we will handle these types of situations by introducing an open set $\nbhdO \subset \Xspace$, where solutions must live.
        
        Suppose that $\hat{J} \colon D(J) \subset \Xspace^* \to \Xspace$ is a closed linear operator, and that we for each $u \in \nbhdO \cap \Vspace$ have a bounded linear operator $B(u) \in \Lin(\Xspace)$. We endow $\Xspace$ with symplectic structure in the form of the state-dependent Poisson map
        \begin{equation}
            \label{abstract state dependent poisson map}
            J(u) \colonequals B(u)\hat{J},
        \end{equation}
        which is required to satisfy a number of hypotheses. Notably, we do \emph{not} require $J(u)$ to be surjective, as is done in GSS, but ask instead for something slightly stronger than that the range of $J(u)$ is dense in $\Xspace$.  We are also allowing for a state-dependent Poisson map, albeit one which can be factored into a ``nice'' state-dependent part $B(u)$ and a state-independent part $\widehat{J}$. This generalization of GSS is necessary because the position of the vortex center enters into the Poisson map.

        \begin{assumption}[Poisson map]
            \label{abstract symplectic assumption}
            \leavevmode
            \begin{enumerate}[label=\rm(\roman*)]
                \item The domain $\Dom(\hat{J})$ is dense in $\Xspace^*$.     \label{J densely defined assumption}
                \item \label{J injectivity assumption} $\hat{J}$ is injective.
                \item For each $u \in \nbhdO \cap \Vspace$, $J(u)$ is skew-adjoint in the sense that
                \[
                \jbracket{J(u)v,w} = -\jbracket{v,J(u)w}
                \]
                for all $v,w \in \Dom(\hat{J})$.
            \end{enumerate}
        \end{assumption}
        
        The main object of interest for this work is the abstract Hamiltonian system 
        \begin{equation}
            \label{abstract Hamiltonian system}
            \frac{du}{dt} = J(u) D \eng(u), \qquad u|_{t=0} = u_0,
        \end{equation}
        where $\eng \in C^3(\nbhdO \cap \Vspace; \R)$ is the \emph{energy functional}.   In addition to the energy, we suppose that there is a second conserved quantity $\mom \in C^3(\nbhdO \cap \Vspace; \R)$, which we call the \emph{momentum}. In order to state what it means to be a solution of \eqref{abstract Hamiltonian system}, and to work with it in a meaningful way, we need to be able to view $D\eng(u)$ and $D\mom(u)$ as elements of $\Xspace^*$.
        
        \begin{assumption}[Derivative extension] \label{extend DP and DE assumption} There exist mappings $\nabla \eng, \nabla \mom \in C^0(\nbhdO \cap \Vspace;\Xspace^*)$ such that $\nabla \eng (u)$ and $\nabla \mom (u)$ are extensions of $D\eng(u)$ and $D\mom(u)$, respectively, for every $u \in \nbhdO \cap \Vspace$.
        \end{assumption}
        
        We say that $u  \in C^0([0,t_0);  \nbhdO \cap \Wspace)$ is a solution of \eqref{abstract Hamiltonian system} on the interval $[0, t_0)$ if
        \begin{equation}
            \label{weak abstract Hamiltonian system}
            \frac{d}{dt}\left\langle u(t),   w \right\rangle = -\left\langle \nabla\eng(u(t)),    J(u(t)) w \right\rangle \qquad \text{ for all $w \in \Dom(\hat{J})$,}
        \end{equation}
        is satisfied in the distributional sense on $(0,t_0)$, the initial condition $u(0) = u_0$ is satisfied, and both $\eng$ and $\mom$ are conserved.
        
        We assume further that there exists a one-parameter family of affine maps $T(s) \colon \Xspace \to \Xspace$, with linear part $dT(s)u \colonequals T(s)u - T(s)0$, having the properties described below. That the group is affine rather than linear in the classical GSS framework is due to the way translation invariance manifests in the wave-borne point vortex problem; see~\eqref{eq:symmetry-group}.
        
        \begin{assumption}[Symmetry group]
            \label{abstract symmetry assumption}
            The symmetry group $T(\placeholder)$ satisfies the following.  
            \begin{enumerate}[label=\rm(\roman*)]
                \item \label{invariances} \textup{(Invariance)}
                    The neighborhood $\nbhdO$, and the subspaces $\Vspace$ and $\Wspace$, are all invariant under the symmetry group. Moreover, $I^{-1}\Dom(\hat{J})$ is invariant under the \emph{linear} symmetry group.
                \item \label{group flow property} \textup{(Flow property)}
                    We have $T(0) = dT(0) = \mathrm{Id}_\Xspace$, and for all $s, r \in \R$,
                    \[
                        T(s+r) = T(s)T(r), \qquad \text{and hence} \qquad dT(s+r) = dT(s)dT(r).
                    \] 
                \item \label{unitary assumption} \textup{(Unitarity)}
                    The linear part $dT(s)$ is a unitary operator on $\Xspace$, and an isometry on $\Vspace$ and $\Wspace$, for each $s \in \R$.
                \item \label{strong continuity} \textup{(Strong continuity)}
                    The symmetry group is strongly continuous on $\Xspace$, $\Vspace$, and $\Wspace$.
                \item \label{affine bound assumption} \textup{(Affine part)} 
                    The function $T(\placeholder)0$ belongs to $C^3(\R; \Wspace)$, and there exists an increasing function $\omega \colon [0,\infty) \to [0,\infty)$ such that
                    \[
                    \n{T(s)0}_{\Wspace} \leq \omega(\n{T(s)0}_{\Xspace}), \quad \text{for all } s \in \R.
                    \]
                \item \label{commutativity assumption} \textup{(Commutativity with $J$)}
                    For all $s \in \R$,
                    \begin{equation}
                        \label{abstract commutation identity}
                        \begin{aligned}
                            \hat{J}I dT(s) &= dT(s)\hat{J}I,\\
                            dT(s)B(u) &= B(T(s)u)dT(s), \quad \text{for all } u \in \nbhdO\cap \Vspace.
                        \end{aligned}
                    \end{equation}
                \item \label{T'(0) assumption} \textup{(Infinitesimal generator)}
                    The infinitesimal generator of $T$ is the affine mapping 
                    \[
                        T'(0)u=\lim_{s \to 0} \left( s^{-1}(T(s)u - u) \right)= dT'(0)u + T'(0)0,
                    \]
                    with dense domain $\Dom(T^\prime(0)) \subset \Xspace$ consisting of all $u\in \Xspace$ such that the limit exists in $\Xspace$ (note that $\Dom(T^\prime(0)) =\Dom(dT^\prime(0))$ by the first part of assumption \ref{affine bound assumption}). Similarly, we may speak of the dense subspaces $\Dom(T'(0)|_\Vspace) \subset \Vspace$ and $\Dom(T'(0)|_\Wspace) \subset \Wspace$ on which the limit exists in $\Vspace$ and $\Wspace$, respectively.
                    
                    We assume that $\nabla P (u) \in \Dom(\hat{J})$ for every $u \in \Dom(T'(0)|_\Vspace) \cap \nbhdO$, and that
                    \begin{equation}
                        \label{abstract T'(0) and P' identity}
                        T'(0)u = J(u)\nabla P (u)
                    \end{equation}
                    for all such $u$. Moreover, we assume that
                    \begin{equation}
                        \label{abstract derivative commutation identity}
                        \hat{J}IdT'(0) = dT'(0)\hat{J}I.
                    \end{equation}
                \item \label{range density} \textup{(Density)}
                    The subspace
                    \[
                        \Dom(T'(0)|_\Wspace) \cap \Rng{\hat{J}}
                    \]
                    is dense in $\Xspace$.
                \item \label{T conserves energy} \textup{(Conservation)}
                    For all $u \in \nbhdO \cap \Vspace$, the energy is conserved by flow of the symmetry group:
                    \begin{equation}
                        \label{abstract energy invariance under T}
                        \eng(u) = \eng(T(s)u), \qquad \text{for all } s \in \R.
                    \end{equation}
            \end{enumerate}
        \end{assumption}
        
        We say that $u \in C^1(\R; \nbhdO \cap \Wspace)$ is a \emph{bound state} of the Hamiltonian system \eqref{abstract Hamiltonian system} provided that it is a solution of the form
        \[
            u(t) = T(c t) U_c,
        \]
        for some $c \in \R$ and $U_c \in \nbhdO \cap \Wspace$. We will also refer to $U_c$ itself as a bound state.

        \begin{assumption}[Bound states]
            \label{bound state assumption}
            There exists a one-parameter family of bound state solutions $\{ U_c :  c \in \cinterval \}$ to the Hamiltonian system \eqref{abstract Hamiltonian system}, where $\cinterval \subset \R$ is a non-empty open interval, and exhibiting the following properties.
            \begin{enumerate}[label=\rm(\roman*)]
                \item \label{bound states improved regularity}
                    The mapping $c \in \cinterval    \mapsto U_c \in \nbhdO \cap \Wspace$ is $C^1$.
                \item \label{bound state domain assumption}
                    For all $c \in \cinterval$, 
                    \begin{equation}
                        \label{technical bound state assumption}
                        U_c \in \Dom(T'''(0)) \cap \Dom(\hat{J}I T'(0)),
                    \end{equation}
                    and
                    \begin{equation}
                        \label{second technical bound state assumption}
                        U_c,~\hat{J}IT'(0)U_c \in \Dom(T'(0)|_\Wspace).
                    \end{equation}
                \item \label{bound state non-degeneracy}
                    The non-degeneracy condition $T'(0) U_c \neq 0$ holds for every $c \in \cinterval$. 
                \item \label{non-periodic bound state assumption}
                    Either $s\mapsto T(s) U_c$ is periodic, or $\liminf_{|s| \to \infty} \n{ T(s) U_c - U_c }_{\Xspace} > 0$.
            \end{enumerate}
        \end{assumption}

For a fixed parameter $c$, we define the \emph{augmented Hamiltonian} to be the functional $\augHam \in C^3(\Vspace \cap \nbhdO; \R)$ given by 
        \[ \augHam(u) \colonequals \eng(u) - c \mom(u).\]
        From the above assumptions it follows that $U_c$ is necessarily a critical point of $\augHam$, and thus each bound state $U_c$ is a critical point of the energy under the constraint of fixed momentum; the wave speed $c$ serves as the Lagrange multiplier.

       With sufficient regularity, we expect that the Hessian of the augmented Hamiltonian has a nontrivial null space containing  $T^\prime(0) U_c$. In many cases --- including our application to wave-borne point vortices --- one finds that the bound states are a saddle point of the energy with Morse index $1$, meaning that there is a unique negative eigenvalue, and the essential spectrum is positive and bounded away from $0$.  This motivates the next assumption on the configuration of the spectrum for the general case.

        \begin{assumption}[Spectrum]
            \label{spectral assumptions}
            The operator $D^2 \augHam(U_c) \in \Lin(\Vspace,\Vspace^*) $ extends uniquely to a bounded linear operator $\Hc \colon \Xspace \to \Xspace^*$ such that:
            \begin{enumerate}[label=\rm(\roman*)]
                \item \label{extensibility assumption}
                    $I^{-1} \Hc$ is self-adjoint on $\Xspace$.
                \item \label{spectrum config assumption}
                    The spectrum of $I^{-1} \Hc$ satisfies 
                    \begin{equation}
                        \label{spectrum of Hc}
                        \spectrum{(I^{-1} \Hc)} = \{ -\mu_c^2,  0\} \cup \Sigma_c,
                    \end{equation}
                    where $-\mu_c^2 < 0$ is a simple eigenvalue corresponding to a unit eigenvector $\chi_c$, $0$ is a simple eigenvalue generated by $T$, and $\Sigma_c \subset (0,\infty)$ is bounded away from $0$.  
            \end{enumerate}
        \end{assumption}

Assuming that all of the hypotheses from the previous subsection hold, we now state the main stability theorem from \cite{varholm2020stability}.   For a fixed bound state $U_c$ and radius $r > 0$, we define the tubular neighborhoods
\begin{align*}
\tube_r^\Xspace & \colonequals \{ u \in \mathcal{O} : \inf_{s \in \R} \| u - T(s) U_c \|_{\Xspace} < r \}, \\
\tube_r^\Wspace & \colonequals \{ u \in \mathcal{O} \cap \Wspace : \inf_{s \in \R} \| u - T(s) U_c \|_{\Wspace} < r \}.
\end{align*}
Similarly, for any $R > 0$, let $\mathcal{B}_R^\Wspace$ denote the intersection of $\mathcal{O}$ with the ball of radius $R$ centered at the origin in $\Wspace$.  Then $U_c$ is said to be \emph{conditionally orbitally stable} provided that for all $r > 0$ and $R > 0$, there exists $r_0 > 0$ such that if $u \colon [0,t_0) \to \mathcal{B}_R^\Wspace$ is a solution to \eqref{abstract Hamiltonian system} with $u(0) \in \tube_{r_0}^\Xspace$, then $u(t) \in \tube_r^\Xspace$ for all $t \in [0,t_0)$.  Here conditional refers to the fact that stability only holds provided we know the solution exists, and that its growth in $\Wspace$ is controllable.
                
        The \emph{moment of instability} is the scalar-valued function $d =d(c)$ that results from evaluating the augmented Hamiltonian along the family of bound states:
        \begin{equation}
            \label{abstract d definition}
            d(c) \colonequals \augHam(U_c) = \eng(U_c) - c \mom(U_c).
        \end{equation}      
        The main appeal of the GSS method lies in its ability to characterize the orbital stability of bound states in terms of the sign of $d^{\prime\prime}$.  In the more general setting of interest here, we have by \cite[Theorem 2.4]{varholm2020stability} the following stability criterion.  
        
        \begin{theorem}[Stability]
            \label{abstract stability theorem}
            Suppose that the above assumptions hold.  If $d^{\prime\prime}(c) > 0$, then the bound state $U_c$ is conditionally orbitally stable. 
        \end{theorem}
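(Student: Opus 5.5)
The plan is to follow the classical Grillakis--Shatah--Strauss Lyapunov argument. Every place where the classical proof uses surjectivity of $J$ or smoothness of $\eng$ on $\Xspace$ is replaced by Assumptions~\ref{abstract interpolation assumption}--\ref{spectral assumptions}.

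\textbf{Preliminaries.} First I record three facts.
\begin{enumerate}[label=\rm(\roman*)]
\item The momentum is invariant under the group, $\mom(T(s)u)=\mom(u)$. Indeed,
\[
\frac{d}{ds}\mom(T(s)u)=\jbracket{\nabla\mom,\, J\nabla\mom}=0
\]
by \eqref{abstract T'(0) and P' identity} and skew-adjointness. This uses the density and domain hypotheses in Assumption~\ref{abstract symmetry assumption} to justify the computation on a dense set and then pass to the limit.
\item Each $U_c$ is a critical point of $\augHam$. This follows by comparing $\frac{d}{dt}T(ct)U_c = cT'(0)U_c = cJ\nabla\mom$ with $J\nabla\eng$, and using injectivity of $\hat J$ together with density of $\Rng{\hat J}$.
\item Differentiating $\nabla\eng(U_c)-c\nabla\mom(U_c)=0$ in $c$ gives $\Hc\,\partial_cU_c=\nabla\mom(U_c)$. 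Combined with $d'(c)=-\mom(U_c)$, this yields
\[
\jbracket{\Hc\partial_cU_c,\partial_cU_c}=-d''(c)<0.
\]
\end{enumerate}

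\textbf{Coercivity.} With these facts, the key linear step is the coercivity lemma: there is $k>0$ with
\[
\jbracket{\Hc v,v}\geq k\n{v}_\Xspace^2
\]
for all $v\in\Xspace$ satisfying $\jbracket{\nabla\mom(U_c),v}=0$ and $(v,T'(0)U_c)_\Xspace=0$. I prove this from the spectral picture in Assumption~\ref{spectral assumptions}, as in GSS. Write $\partial_cU_c=a_0\chi_c+b_0T'(0)U_c+p_0$ with $p_0$ in the positive spectral subspace. The inequality $-\mu_c^2a_0^2+\jbracket{\Hc p_0,p_0}<0$ then rules out a nonpositive direction on the constraint set. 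Compactness of the unit sphere in $\operatorname{span}\{\chi_c\}$, together with the spectral gap of $\Sigma_c$, upgrades nonnegativity to the uniform bound.

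\textbf{Modulation.} Next I construct a modulation parameter. For $u$ in a small $\Xspace$-tube, let $s(u)$ minimize $\n{u-T(s)U_c}_\Xspace$. Existence follows from continuity and Assumption~\ref{bound state assumption}\ref{non-periodic bound state assumption}, which prevents minimizing sequences from escaping to $|s|\to\infty$. At the minimizer, $v\colonequals dT(-s)(u-T(s)U_c)$ satisfies $(v,T'(0)U_c)_\Xspace=0$; the affine structure and the domain conditions \eqref{technical bound state assumption}--\eqref{second technical bound state assumption} make $s\mapsto\n{u-T(s)U_c}^2$ differentiable. To handle the momentum constraint, I use that $d''(c)\neq0$ means $c\mapsto\mom(U_c)$ is a local diffeomorphism, so I can compare $u$ with $U_{c'}$ where $\mom(U_{c'})=\mom(u)$. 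Alternatively, I can project off the $\partial_cU_c$ direction at a cost of $O(|\mom(u)-\mom(U_c)|)$.

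\textbf{Energy estimate and conclusion.} Then I Taylor expand, using $\eng\in C^3(\nbhdO\cap\Vspace)$ and invariance of $\eng$ and $\mom$ under $T$:
\[
\augHam(u)-\augHam(U_c)=\tfrac12\jbracket{\Hc v,v}+O(\n{v}_\Vspace^3).
\]
This is the main obstacle, because $\augHam$ is not $C^2$ on $\Xspace$. I handle it with \eqref{abstract interpolation inequality}, which gives $\n{v}_\Vspace^3\leq C\n{v}_\Xspace^{2+\theta}\n{v}_\Wspace^{1-\theta}$. The $\Wspace$ norm of $v$ is bounded in terms of $R$ plus $\n{T(s)0}_\Wspace\leq\omega(\n{T(s)0}_\Xspace)$, and the latter is controlled because $u$ stays in the $\Xspace$-tube while the argument runs. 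Combining this with coercivity gives
\[
\eng(u)-\eng(U_c)\geq \tfrac{k}{4}\n{v}_\Xspace^2-C|\mom(u)-\mom(U_c)|-C_R\n{v}_\Xspace^{2+\theta}
\]
for $\n{v}_\Xspace$ small. Conservation of $\eng$ and $\mom$ along the solution, continuity of $\eng$ and $\mom$ near $U_c$, and a continuity-in-time argument then finish the proof. Suppose the distance of $u(t)$ to the orbit first reaches some $r_1\le r$. At that time the left side is small, of size $O(r_0)$, which contradicts the lower bound once $r_0$ is chosen small depending on $r$ and $R$.
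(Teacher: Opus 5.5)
Your outline is essentially the argument the paper relies on. The paper does not reprove the statement; it invokes \cite[Theorem 2.4]{varholm2020stability}, whose proof is this GSS Lyapunov scheme, with the cubic Taylor remainder controlled through \eqref{abstract interpolation inequality}. Your other steps are sound, including the momentum invariance, the identity $\jbracket{\Hc\partial_cU_c,\partial_cU_c}=-d''(c)$, the constrained coercivity, the modulation by minimizing over the orbit, and the $\Wspace$-bound on $v=T(-s)u-U_c$ obtained from the affine-part hypothesis.

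The one step whose justification fails under the hypotheses as stated here is (ii). Evaluate the weak formulation \eqref{weak abstract Hamiltonian system} along $T(ct)U_c$, and use $T'(0)U_c=J(U_c)\nabla\mom(U_c)$ and skew-adjointness. This only yields
\[
\jbracket{\nabla\eng(U_c)-c\nabla\mom(U_c),\,B(U_c)\hat J w}=0\qquad\text{for all } w\in\Dom{\hat J}.
\]
To conclude $\nabla\augHam(U_c)=0$, you need $\Rng{J(U_c)}=B(U_c)\Rng{\hat J}$ to be dense in $\Xspace$. Density of $\Rng{\hat J}$ does not give this unless $B(U_c)$ has dense range. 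Injectivity of $\hat J$ does not help either, because you know neither that $\nabla\augHam(U_c)\in\Dom{\hat J}$ nor that $B(U_c)$ is injective. Bijectivity of $B(u)$, one of the hypotheses of \cite{varholm2020stability}, would supply the missing density, but the paper has dropped it (Remark~\ref{weakened hypotheses remark}).

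Everything downstream rests on criticality: your fact (iii) and the vanishing of the linear term in your Taylor expansion. So you should either take criticality as part of the framework, as the paper does in the sentence right after defining $\augHam$, or verify it directly for the concrete bound states. You should not derive it the way you propose.

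A smaller point concerns the step where you project off the momentum direction. The cost there is $O\bigl(|\mom(u)-\mom(U_c)|+\n{v}_\Vspace^2\bigr)$, not just $O(|\mom(u)-\mom(U_c)|)$, because $\mom$ is only $C^3$ on $\Vspace$. After interpolation the extra term contributes at most $C_R\n{v}_\Xspace^{2+\theta}$, so your final lower bound still holds.
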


        \begin{remark}\label{weakened hypotheses remark}
        In fact, this result is slightly sharper than \cite[Theorem 2.4]{varholm2020stability}, which imposes two additional assumptions on the Poisson map:
        \begin{enumerate}[label=\rm(\roman*)]
        		 \item \label{B bijective assumption} For each $u \in \nbhdO \cap \Vspace$, the operator $B(u)$ is bijective. 
		  \item \label{regularity of J} The map $u \mapsto B(u)$ is of class $C^1(\nbhdO \cap \Vspace;\Lin(\Xspace)) \cap C^1(\nbhdO \cap \Wspace; \Lin(\Wspace))$.
        \end{enumerate}
        Neither of these holds for the wave-borne point vortex problem in finite-depth; see Remark~\ref{problem with B remark}. Thankfully, the proof of \cite[Theorem 2.4]{varholm2020stability} does not actually require them. On the other hand, they are needed for the proof of the instability result~\cite[Theorem 2.6]{varholm2020stability}. 
        \end{remark}
        
\subsection{Hamiltonian formulation of the wave-borne point vortex problem}
\label{hamiltonian formulation ww problem section} 

We now work to reframe the wave-borne point vortex problem in the form of a Hamiltonian system, then confirm it satisfies the plethora of assumptions enumerated above. Throughout this section, we work in dimensional variables, with the depth $d$ normalized to $d=1$. 

Recalling from Section~\ref{intro statement of results section} the decomposition \eqref{intro hodge helmholtz} of the velocity into an irrotational part $\nabla\Phi$ and the vortical contribution $\epsilon\nabla\Theta$ near the free surface $\fluidS_t$, as well as the velocity trace $\varphi\colonequals\Phi(t,x_1,\eta(t,x_1))$ of $\Phi$ on $\fluidS$, we now define $\Theta$ in the finite-depth setting. The requirements on $\Theta$ are that it must satisfy the impermeable condition on $\fluidB$, carry the correct singularity at the vortex center $\bfz$ and decay as  $\abs{x_1}\to\infty$. This is achieved by taking $\Theta=\realpart w$, where $w$ is the complex potential
\begin{equation}\label{eq:complex_potential}
    w(z) \colonequals \frac{1}{2\pi i}\log\left( \frac{\sinh(\frac{\pi}{2}(z - \bfz))}{\sinh(\frac{\pi}{2}(z - \bar{\bfz}))}\right),
\end{equation}
with the complex coordinate $z=x_1 + ix_2$ identified with the Cartesian coordinate $x=(x_1,x_2)$ and $\bfz=\bfz_1 + \bfz_2 i$ being the vortex position, and $\bar\bfz$ being its reflection through the undisturbed air-sea interface.
It is convenient to split $\Theta =\Theta_1-\Theta_2$, where
\begin{equation*}
	\Theta_1 \colonequals \realpart{\left(\frac{1}{2\pi i}\log\left(\sinh\left(\frac\pi2(z-\bfz)\right)\right)\right)}, \qquad 
    \Theta_2 \colonequals \realpart{\left(\frac{1}{2\pi i}\log\left(\sinh\left(\frac\pi2(z-\bar \bfz)\right)\right)\right)}.
\end{equation*}
Moreover, the function $\Theta$ is a well-defined harmonic function on the open set 
\[
    \{(x_1,x_2)\in\R^2: x_1\neq \bfz_1\text{ or }\abs{x_2}<-\bfz_2\},
\]
and $\nabla\Theta$ extends to a smooth velocity field on $\fluidD_t \setminus \{\bfz\}$ and is $L^2$ on the complement of any neighborhood of $\bfz$ in $\fluidD_t$. For later use, we also introduce the harmonic conjugate $\Gamma$ of $\Theta$, which we similarly split as $\Gamma =\Gamma_1-\Gamma_2$ for
\begin{equation*}
    \Gamma_1 \colonequals -\imagpart\left(\frac{1}{2\pi i}\log\left(\sinh\left(\frac\pi2(z-\bfz)\right)\right)\right), \qquad \Gamma_2 \colonequals -\imagpart\left(\frac{1}{2\pi i}\log\left(\sinh\left(\frac\pi2(z-\bar \bfz)\right)\right)\right).
\end{equation*}

Since $\Theta$ is explicitly given by the vortex center $\bfz(t)$, and $\Phi$ is harmonic and hence entirely determined by $\eta$ and its trace $\varphi$, it suffices to enforce the kinematic~\eqref{intro general kinematic} and dynamic conditions~\eqref{intro general dynamic} on the free surface $\fluidS_t$, with the vortical part $\nabla\Theta|_{\fluidS_t}$ entering as a forcing term. As these both involve taking tangential and normal derivatives of $\Phi$ and $\Theta$, we introduce the shorthand
\[
    \nabla_\perp \colonequals  (-\eta'\p_{x_1} + \p_{x_2})\big|_{\fluidS_t},
    \qquad
    \nabla_\top \colonequals  (\p_{x_1} + \eta'\p_{x_2})\big|_{\fluidS_t},
\]
which arise when parametrizing the free surface using $\eta$. Note also that we are using the convention that spatial derivatives of quantities restricted to the boundary are denoted by a prime, while $\p_{x_1}$ is reserved for functions of two or more spatial variables.

The tangential derivative $\nabla_{\top}\Phi$ is simply $\varphi'$, but to express the normal derivative $\nabla_\perp\Phi$ requires the nonlocal Dirichlet--Neumann operator $G(\eta)$, defined by
\begin{equation}\label{eq:DN_operator}
    G(\eta)\varphi \colonequals  \nabla_\perp(\mathscr{H}(\eta)\varphi),
\end{equation}
where $\mathscr{H}(\eta)\varphi$ is the unique bounded harmonic extension of $\varphi$ to the fluid domain $\fluidD_t$ satisfying the impermeable boundary condition on the bed $\fluidB$. Note, then, that we have
\[
	\nabla \Phi|_{\fluidS_t}
		=
	\begin{pmatrix}
		\frac{1}{\jbracket{\eta^\prime}^2} & -\frac{\eta^\prime}{\jbracket{\eta^\prime}^2} \\
		\frac{\eta^\prime}{\jbracket{\eta^\prime}^2} & \frac{1}{\jbracket{\eta^\prime}^2}
	\end{pmatrix}
	\begin{pmatrix} 
		\varphi^\prime\\
		G(\eta)\varphi
	\end{pmatrix},
	\qquad
	\partial_t \varphi = \partial_t \Phi|_{\fluidS_t} + \partial_t \eta \partial_{x_2} \Phi|_{\fluidS_t}.
\]

Using the above relations, the wave-borne point vortex problem can be recast in the style of Zakharov--Craig--Sulem as a nonlocal equation for $(\eta, \varphi, \bfz)$:
\begin{equation}\label{eq:system}
	\left\{
		\begin{aligned}
    		\p_t \eta &= G(\eta)\varphi + \epsilon\nabla_\perp\Theta, \\
    		\p_t\varphi &= -\frac{(\varphi')^2 - 2\eta'\varphi'G(\eta)\varphi
    		- (G(\eta)\varphi)^2}{2\langle\eta'\rangle^2} - g\eta
    		+ \sigma_s\left(\frac{\eta'}{\langle\eta'\rangle}\right)' \\
    		&\qquad - \epsilon\varphi'\Theta_{x_1}|_{\fluidS_t}
    		- \frac{\epsilon^2}{2}\big(|\nabla\Theta|^2\big)\big|_{\fluidS_t}
    		+ \epsilon\boldsymbol{\xi}|_{\fluidS_t} \cdot \p_t\bfz, \\
    		\p_t\bfz &= \nabla\Phi(\bfz)
    		- \epsilon\,\p_{x_1}\Theta_2(\bfz)\, \mathbf{e}_1
		\end{aligned}
	\right.
\end{equation}
where we have introduced 
\[
	\Xi \colonequals \Theta_1 + \Theta_2, \qquad 
	\bfxi \colonequals (\Theta_{x_1}, \Xi_{x_2}).
\]
These are motivated by the observation that $\nabla_{\bfz}\Theta = -\bfxi$: Indeed, $\p_{\bfz_1}\Theta = -\p_{x_1}\Theta$, while since $\bar \bfz$ is the reflection of $\bfz$, we have $\p_{\bfz_2}\Theta_1 = -\p_{x_2}\Theta_1$ but $\p_{\bfz_2}\Theta_2 = \p_{x_2}\Theta_2$, so that $\p_{\bfz_2}\Theta = -\p_{x_2}(\Theta_1+\Theta_2) = -\p_{x_2}\Xi$.

The first equation in \eqref{eq:system} is the air-sea interface kinematic condition in~\eqref{intro general kinematic}. To derive the second, we start from the unsteady Bernoulli law, 
\[
	\partial_t \left(  \Phi + \epsilon  \Theta \right) + \frac{1}{2} |\nabla \Phi + \epsilon \nabla \Theta|^2 + g x_2 + P = 0,
\]
which is just the integrated momentum equations~\eqref{eq:2D-Incompressible-Euler} using the Hodge--Helmholtz decomposition~\eqref{intro hodge helmholtz}. This is valid in any simply connected neighborhood of $\fluidS_t$ that avoids the point vortex. Evaluating this identity on $\fluidS_t$, the dynamic condition~\eqref{intro general dynamic} is equivalent to replacing the trace of the pressure by the signed curvature. Stated in terms of surface variables, this yields the second equation in~\eqref{eq:system}. The third equation is the Helmholtz--Kirchhoff equation~\eqref{eq:Helmholtz-Kirchhoff}, again taking into account the Hodge--Helmholtz decomposition~\eqref{intro hodge helmholtz}.

We now rewrite \eqref{eq:system} as a Hamiltonian system for the state variables $u = (\eta, \varphi, \bfz)$. The first step is to fix a functional analytic framework. For that, we introduce the continuous scale of spaces
\begin{equation}\label{eq:scale}
    \begin{aligned}
        \Xspace^\frac12&\colonequals H^1(\R)\times\Xspace_2\times \R^2\\
        \Xspace^s &= \Xspace_1^s \times \Xspace_2^s \times \Xspace_3
        \colonequals  H^{s+1/2}(\R) \times (\dot{H}^s(\R) \cap
        \Xspace_2) \times \R^2, \quad s > 1/2,
    \end{aligned}
\end{equation}
where $\Xspace_2$ is the completion of the space $\mathcal{S}_0$ of
Schwartz functions whose Fourier transforms vanish in a neighborhood of
the origin, with respect to the norm
\begin{equation*}
    \n{f}_{\Xspace_2}^2\colonequals \int_\R \abs{\xi}\tanh{\abs{\xi}}\abs{\hat f(\xi)}^2\,d\xi.
\end{equation*}
It is easy to see that for each $s \geq 1/2$, $\Xspace^s$ is a Hilbert space, and the embedding $\Xspace^s \hookrightarrow \Xspace^{s'}$ is dense for all $1/2 \leq s' \leq s$.

For the energy space, we take
\begin{equation}\label{eq:X}
    \Xspace \colonequals  \Xspace^{1/2} = H^1(\R) \times \Xspace_2
    \times \R^2,
\end{equation}
which has the dual space
\[
    \Xspace^* = H^{-1}(\R) \times \Xspace_2^* \times \R^2,
\]
and for which the isomorphism $I\colon \Xspace \to \Xspace^*$
takes the explicit form
\[
    I = (1 - \p^2_{x_1},\ \abs{\p_{x_1}}\tanh{\abs{\p_{x_1}}},\
    \mathrm{Id}_{\R^2}).
\]
As explained in Section~\ref{intro statement of results section}, the definition of $\Xspace_2$ is made so that $\n{\varphi}_{\Xspace_2}^2/2$ is the kinetic energy contribution of the irrotational part of the velocity field at the undisturbed state, and the second component of the Riesz map $I$ is the finite-depth Dirichlet--Neumann operator $G(0)$. 
 
The energy functional will involve Dirichlet--Neumann operator $G(\eta)$, whose smooth dependence on $\eta$ requires more regularity than $\Xspace$. We therefore take $s>1$ in \eqref{eq:scale} and set
\begin{equation}\label{eq:V}
    \Vspace \colonequals  \Xspace^{1+} = H^{3/2+}(\R) \times (\dot{H}^{1+}(\R) \cap \Xspace_2) \times \R^2,
\end{equation}
where by $\Xspace^{1+}$ we mean $\Xspace^{1+\sigma}$ for $0 < \sigma \ll 1$. For the well-posedness space we use
\begin{equation}\label{eq:W}
    \Wspace \colonequals  \Xspace^{5/2+} = H^{3+}(\R) \times (\dot{H}^{5/2+}(\R) \cap \Xspace_2) \times
    \R^2.
\end{equation}

\begin{lemma}[Function spaces]\label{lem:spaces}
Let $\Xspace$, $\Vspace$, and $\Wspace$ be defined by \eqref{eq:X}, \eqref{eq:V}, and \eqref{eq:W}, respectively. Then there exist constants $C > 0$ and $\theta \in (0,1]$ such that the interpolation inequality \eqref{abstract interpolation inequality} holds.
\end{lemma}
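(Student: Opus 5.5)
The interpolation inequality \eqref{abstract interpolation inequality} splits across the three factors of the product spaces. I will prove a componentwise inequality of the form $\n{u_j}_{\Vspace_j} \lesssim \n{u_j}_{\Xspace_j}^{a}\n{u_j}_{\Wspace_j}^{1-a}$ with a single exponent $a \in (2/3,1)$. Cubing gives $\n{u_j}_{\Vspace_j}^3 \lesssim \n{u_j}_{\Xspace_j}^{3a}\n{u_j}_{\Wspace_j}^{3-3a}$. Since $\n{u_j}_{\Xspace_j} \lesssim \n{u_j}_{\Wspace_j}$ (the embeddings are continuous), any surplus power of the $\Wspace$ norm can be traded for the $\Xspace$ norm. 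Hence it suffices to have, for each component, an exponent $a_j$ with $3a_j \geq 2+\theta$ for a common $\theta\in(0,1]$. Summing the three componentwise bounds then yields \eqref{abstract interpolation inequality}.

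The $\R^2$ factor is trivial, since all three norms coincide there; take $a=1$.

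For $\eta$, I use the standard Sobolev interpolation on the Fourier side via Hölder:
\[
\n{\eta}_{H^{3/2+\sigma}} \le \n{\eta}_{H^1}^{a}\n{\eta}_{H^{3+\sigma'}}^{1-a}, \qquad a = \frac{3/2+\sigma' - \sigma}{2+\sigma'} .
\]
For $\sigma,\sigma'$ small, $a$ is close to $3/4>2/3$. So $3a \geq 2+\theta$ for some small $\theta>0$, for example $\theta = 1/8$.

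For $\varphi$, the key step is the one I expect to be the main obstacle. The $\Xspace_2$ norm has symbol $|\xi|\tanh|\xi|$, which behaves like $\xi^2$ near the origin and like $|\xi|$ at infinity. Meanwhile the $\Vspace_2$ and $\Wspace_2$ norms involve homogeneous pieces $|\xi|^{2s}$ together with the $\Xspace_2$ part. I split $\n{\varphi}_{\Vspace_2}$ into its $\Xspace_2$ part, which is trivially bounded, and its $\dot H^{1+\sigma}$ part. I then split frequencies with a cutoff at $|\xi|=1$.

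On high frequencies, $|\xi|\tanh|\xi| \eqsim |\xi|$. Hölder gives
\[
\int_{|\xi|>1}|\xi|^{2+2\sigma}|\hat\varphi|^2 \le \Big(\int |\xi|\tanh|\xi||\hat\varphi|^2\Big)^{b}\Big(\int|\xi|^{5+2\sigma'}|\hat\varphi|^2\Big)^{1-b},
\]
with $b$ determined by $2+2\sigma = b + (5+2\sigma')(1-b)$, so $b \approx 3/4$.

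On low frequencies, $|\xi|^{2+2\sigma} \le |\xi|^2 \lesssim |\xi|\tanh|\xi|$, so this piece is bounded by the $\Xspace_2$ norm alone, with exponent one. This is the point where finite depth matters. A purely homogeneous argument would fail, but low frequencies are actually easier here, because the $\Xspace_2$ weight is stronger than $|\xi|^{2}$ near zero.

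Combining the two pieces, $\n{\varphi}_{\Vspace_2}^2 \lesssim \n{\varphi}_{\Xspace_2}^2 + \n{\varphi}_{\Xspace_2}^{2b}\n{\varphi}_{\Wspace_2}^{2-2b}$. The first term is absorbed into the second using $\n{\varphi}_{\Xspace_2}\le\n{\varphi}_{\Wspace_2}$. This gives a componentwise bound with $a=b$ near $3/4$.

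Finally, I take $\theta$ to be the minimum of the resulting $3a_j-2$ over the components. For instance, with $\sigma=\sigma'$ small, the choice $\theta=1/8$ works.
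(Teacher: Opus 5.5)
Your argument is correct and follows the paper's proof essentially line by line. It uses the same frequency split at $|\xi|=1$, with the low modes absorbed by the $\Xspace_2$ weight. It applies H\"older on the high modes with exponent $b=\frac{3/2+\sigma'-\sigma}{2+\sigma'}$, which is the paper's $1-\lambda$. It uses the same exponent in the standard interpolation for $\eta$, and treats the $\bfz$-component as trivial. The only cosmetic difference is that you combine by summing cubed componentwise bounds rather than using the paper's H\"older inequality for sums.

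One wording slip: when $3a_j>2+\theta$, it is the surplus power of the $\Xspace$ norm that gets bounded by the $\Wspace$ norm, not the reverse. That direction is what your conclusion actually relies on.
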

\begin{proof}
We establish the interpolation inequality for each component separately, then combine. Since $\tanh\abs{\xi} \eqsim \abs{\xi}$ for $\abs{\xi} \leq 1$ and $\tanh\abs{\xi} \eqsim 1$ for $\abs{\xi} > 1$, the $\Xspace_2$-norm satisfies
\begin{equation}\label{eq:X2-high-low}
    \n{\varphi}_{\Xspace_2}^2
    \eqsim \int_{\abs{\xi} \leq 1} \abs{\xi}^2 \abs{\hat\varphi}^2\,d\xi
    + \int_{\abs{\xi} > 1} \abs{\xi} \abs{\hat\varphi}^2\,d\xi.
\end{equation}
We claim that
\begin{equation}\label{eq:phi-interp}
    \n{\varphi}_{\dot{H}^{1+\sigma}}
    \lesssim \n{\varphi}_{\Xspace_2}^{1-\lambda}
    \n{\varphi}_{\mathbb{W}_2}^{\lambda}
\end{equation}
for $\lambda = \frac{1+2\sigma}{2(2+\tau)} \approx 1/4$, where $\sigma, \tau > 0$ are the small parameters defining $\Vspace$ and $\Wspace$.

For low frequencies $\abs{\xi} \leq 1$, since
$\abs{\xi}^{2+2\sigma} \leq \abs{\xi}^2$ for $\abs{\xi} \leq 1$, we have
\[
    \int_{\abs{\xi} \leq 1} \abs{\xi}^{2+2\sigma} \abs{\hat\varphi}^2\,d\xi \leq \int_{\abs{\xi} \leq 1} \abs{\xi}^2 \abs{\hat\varphi}^2\,d\xi \lesssim \n{\varphi}_{\Xspace_2}^2 \lesssim \n{\varphi}_{\Xspace_2}^{2(1-\lambda)} \n{\varphi}_{\Wspace_2}^{2\lambda},
\]
where the last step uses $\n{\varphi}_{\Xspace_2} \lesssim \n{\varphi}_{\Wspace_2}$.

For high frequencies $\abs{\xi} > 1$, the exponent $\lambda$ chosen above satisfies $(1-\lambda) \cdot \frac{1}{2} + \lambda \cdot (\frac{5}{2}+\tau) = 1 + \sigma$. By H\"{o}lder's inequality,
\begin{align*}
    \int_{\abs{\xi}>1} \abs{\xi}^{2+2\sigma} \abs{\hat\varphi}^2\,d\xi &= \int_{\abs{\xi}>1} \big(\abs{\xi}\abs{\hat\varphi}^2\big)^{1-\lambda} \big(\abs{\xi}^{5+2\tau}\abs{\hat\varphi}^2\big)^{\lambda}\,d\xi \\
    &\leq \left(\int_{\abs{\xi}>1} \abs{\xi}\abs{\hat\varphi}^2\,d\xi\right)^{1-\lambda}\left(\int_{\abs{\xi}>1} \abs{\xi}^{5+2\tau}\abs{\hat\varphi}^2\,d\xi\right)^{\lambda} \lesssim \n{\varphi}_{\Xspace_2}^{2(1-\lambda)}\n{\varphi}_{\Wspace_2}^{2\lambda}.
\end{align*}
Combining low and high frequencies gives \eqref{eq:phi-interp}, and
therefore
\begin{equation}\label{eq:V2-interp}
    \n{\varphi}_{\mathbb{V}_2}^2
    = \n{\varphi}_{\dot{H}^{1+\sigma}}^2 + \n{\varphi}_{\Xspace_2}^2
    \lesssim \n{\varphi}_{\Xspace_2}^{2(1-\lambda)}
    \n{\varphi}_{\mathbb{W}_2}^{2\lambda}.
\end{equation}
The $\eta$-component satisfies $\n{\eta}_{H^{3/2+\sigma}} \lesssim \n{\eta}_{H^1}^{1-\lambda} \n{\eta}_{H^{3+\tau}}^{\lambda}$ by the standard Gagliardo--Nirenberg inequality, with the same exponent $\lambda$, as one checks from $\frac{3}{2}+\sigma = (1-\lambda) \cdot 1 + \lambda \cdot (3+\tau)$; the $\bfz$-component is finite-dimensional, so it contributes trivially. Denoting $a_i = \n{u_i}_{\Xspace_i}$ and $b_i = \n{u_i}_{\mathbb{W}_i}$ for $i = 1, 2, 3$, we have $\n{u_i}_{\mathbb{V}_i}^2 \lesssim a_i^{2(1-\lambda)} b_i^{2\lambda}$ for each $i$. Hence, by an application of Young's inequality
\[
    \n{u}_{\Vspace}^2
    \lesssim \sum_{i=1}^{3} a_i^{2(1-\lambda)} b_i^{2\lambda}
    \leq \left(\sum_{i=1}^{3} a_i^2\right)^{1-\lambda}
         \left(\sum_{i=1}^{3} b_i^2\right)^{\lambda}
    = \n{u}_{\Xspace}^{2(1-\lambda)} \n{u}_{\Wspace}^{2\lambda}.
\]
Raising to the power $3/2$, we conclude
\[
    \n{u}_{\Vspace}^3 \lesssim \n{u}_{\Xspace}^{3(1-\lambda)}
    \n{u}_{\Wspace}^{3\lambda}
    = \n{u}_{\Xspace}^{2+\theta} \n{u}_{\Wspace}^{1-\theta},
\]
with $\theta = 1 - 3\lambda$, which is positive provided $\sigma$ and
$\tau$ are sufficiently small (indeed $\lambda \to \frac{1}{4}$ as
$\sigma, \tau \to 0$).
\end{proof}

For the problem to be well-defined, the fluid domain must be non-degenerate, and the surface $\fluidS_t$ must lie strictly between the point vortex at $\bfz \in \fluidD_t$ and the mirror vortex $\bar \bfz$. We therefore let
\[
    \mathcal{O} \colonequals  \{u \in \Xspace : \eta > -1 \text{ on } \R, \
    -1 < \bfz_2 < \eta(\bfz_1) < -\bfz_2\}
\]
and seek solutions taking values in $\mathcal{O} \cap \mathbb{W}$ at each time. Note that $\mathcal{O}$ is indeed open in $\Xspace$, since $H^1(\R) \hookrightarrow C_0(\R)$. This differs from \cite{varholm2020stability} in two respects: the constraint $\eta > -1$ ensures that $\fluidD_t$ is non-degenerate, while the constraint $\bfz_2 > -1$ keeps the vortex inside.

We endow $\Xspace$ with symplectic structure by prescribing a Poisson map. First, consider the linear operator $\hat{J}: \Dom(\hat{J}) \subset \Xspace^* \to \Xspace$
defined by
\begin{equation}\label{eq:Jhat}
    \hat J \colonequals  \begin{pmatrix} 0 & 1 & 0 & 0 \\
    -1 & 0 & 0 & 0 \\
    0 & 0 & 0 & \epsilon^{-1} \\ 0 & 0 & -\epsilon^{-1} & 0 \end{pmatrix},
\end{equation}
with the natural domain
\[
    \Dom(\hat{J}) \colonequals  (H^{-1}(\R) \cap \Xspace_2)
    \times (H^1(\R) \cap \Xspace_2^*) \times \R^2.
\]
One can understand $\hat J$ as encoding the Hamiltonian structure for the point vortex and the water wave in isolation. To get the full system, we must incorporate wave--vortex interaction terms. For each $u \in \mathcal{O} \cap \Vspace$, define
\begin{equation}\label{eq:B}
    B(u) \colonequals  \id_\Xspace + \mathscr{K}(u),
\end{equation}
where $\mathscr{K}(u)$ is the linear operator given by
\[
    \mathscr{K}(u)\dot{w} \colonequals
    \begin{pmatrix} 0 & 0 & 0 & 0 \\
    -\epsilon\Xi_{x_2}|_{\fluidS_t} & \epsilon\Theta_{x_1}|_{\fluidS_t} &
    \epsilon\Theta_{x_1}|_{\fluidS_t} & \epsilon\Xi_{x_2}|_{\fluidS_t} \\
    0 & 1 & 0 & 0 \\ -1 & 0 & 0 & 0 \end{pmatrix}
    \begin{bmatrix} \langle\Theta_{x_1}|_{\fluidS_t}, \dot{\eta}\rangle \\
    \langle\Xi_{x_2}|_{\fluidS_t}, \dot{\eta}\rangle \\
    \dot{\bfz}_1 \\ \dot{\bfz}_2 \end{bmatrix}
\]
for all $\dot{w}=(\dot\eta,\dot\varphi,\dot \bfz) \in \Rng{\hat{J}}$. The full Poisson map is formed, as in
\eqref{abstract state dependent poisson map}, by composing $\hat{J}$ with $B(u)$.

\begin{remark}
\label{problem with B remark}
Unlike in \cite{varholm2020stability}, the pairing $\langle\Xi_{x_2}|_{\fluidS_t},\dot\eta\rangle$ appearing in
$\mathscr{K}(u)$ is not obviously finite for $\dot\eta \in H^1(\R)$: this is because $\Xi_{x_2}|_{\fluidS_t}(x_1) \to \pm \frac12$ as $x_1\to\pm\infty$, so $\Xi_{x_2}|_{\fluidS_t} \notin H^{-1}(\R)$. However, we can show that $\left(\Xi_{x_2}|_{\fluidS_t}\right)'\in H^{3/2+}(\R) \subset H^{-1/2}(\R)$. Consequently, if $\dot{w}\in\Rng{\hat{J}}$, so that $\dot\eta\in H^1(\R)\cap\Xspace_2^*$, the pairing $\langle\Xi_{x_2}|_{\fluidS_t},\dot\eta\rangle$ is exactly the duality pairing and is therefore finite.
\end{remark}

\begin{lemma}[Properties of $J$]\label{lem:properties-of-J}
    For each $u\in \mathcal{O}\cap\mathbb{V}$, the operator $J(u):\Dom(\hat J)\to \Xspace$ is given by
    \begin{equation}\label{eq:J}
    J(u) \colonequals  B(u)\hat{J} = \begin{pmatrix} 0 & 1 & 0 & 0 \\
    -1 & J_{22} & J_{23} & J_{24} \\
    0 & J_{32} & 0 & \epsilon^{-1} \\
    0 & J_{42} & -\epsilon^{-1} & 0 \end{pmatrix},
\end{equation}
where the entries are given by
\begin{align*}
    J_{22} &= -\epsilon\Xi_{x_2}|_{\fluidS_t}\langle\placeholder, \Theta_{x_1}|_{\fluidS_t}\rangle
    + \epsilon\Theta_{x_1}|_{\fluidS_t}\langle\placeholder, \Xi_{x_2}|_{\fluidS_t}\rangle, \\
    J_{23} &= -\Xi_{x_2}|_{\fluidS_t}, \quad J_{24} = \Theta_{x_1}|_{\fluidS_t}, \\
    J_{32} &= \langle\placeholder, \Xi_{x_2}|_{\fluidS_t}\rangle, \quad
    J_{42} = -\langle \placeholder, \Theta_{x_1}|_{\fluidS_t}\rangle,
\end{align*}
and the following hold:
\begin{enumerate}[label=\rm(\roman*)]
    \item \label{pv dense domain J}  $\Dom(\hat J)\subset \Xspace^*$ is dense.
    \item \label{pv hat J injective} $\hat J$ is closed and injective.
    \item \label{pv J skew} For each $u \in \nbhdO \cap \Vspace$, $J(u)$ is skew-adjoint
    in the sense that
                \[
                	\jbracket{J(u)v,w} = -\jbracket{v,J(u)w}
                \]
    for all $v,w \in \Dom(\hat{J})$.
\end{enumerate}
\end{lemma}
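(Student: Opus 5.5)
The plan is to verify the formula \eqref{eq:J} by direct matrix multiplication, and then check \ref{pv dense domain J}--\ref{pv J skew} one at a time. The matrix $\mathscr{K}(u)$ in \eqref{eq:B} is applied to $\dot w = \hat J v$, where $v = (v_1,v_2,v_3,v_4) \in \Dom(\hat J)$. Then $\hat J v = (v_2, -v_1, \epsilon^{-1} v_4, -\epsilon^{-1} v_3)$. Substituting, the bracketed vector becomes $(\langle \Theta_{x_1}|_{\fluidS_t}, v_2\rangle, \langle \Xi_{x_2}|_{\fluidS_t}, v_2\rangle, \epsilon^{-1}v_4, -\epsilon^{-1}v_3)$. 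Multiplying by the $4\times4$ coefficient matrix produces three things:
\begin{itemize}
\item[] the second row $-\epsilon\Xi_{x_2}\langle\Theta_{x_1},v_2\rangle + \epsilon\Theta_{x_1}\langle \Xi_{x_2},v_2\rangle + \Theta_{x_1}v_4 - \Xi_{x_2} v_3$, which gives $J_{22}$, $J_{23}$ and $J_{24}$;
\item[] the third row $\langle \Xi_{x_2},v_2\rangle$, which gives $J_{32}$;
\item[] the fourth row $-\langle\Theta_{x_1},v_2\rangle$, which gives $J_{42}$.
\end{itemize}
Adding $\hat J v$ itself then yields \eqref{eq:J}. For the pairings to make sense, I would invoke Remark~\ref{problem with B remark}. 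Since $v_2 \in H^1(\R)\cap\Xspace_2^*$, the pairing with $\Theta_{x_1}|_{\fluidS_t}$ is finite, because that trace decays and lies in $H^{-1}$. The pairing with $\Xi_{x_2}|_{\fluidS_t}$ is handled by the remark's decomposition: a bounded function with limits $\pm\frac12$, whose derivative is in $H^{-1/2}$, paired with an element of $\Xspace_2^*$.

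For \ref{pv dense domain J}, I would note that $\Dom(\hat J)$ contains $\mathcal{S}_0\times\mathcal{S}_0\times\R^2$. Indeed, Schwartz functions with Fourier support away from the origin lie in both $H^{\pm1}$ and $\Xspace_2,\Xspace_2^*$. This set is dense in $H^{-1}$, and also in $\Xspace_2^*$, since on the Fourier side $\Xspace_2^*$ is a weighted $L^2$ space with weight $(|\xi|\tanh|\xi|)^{-1}$, where compactly supported functions away from $0$ are dense.

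For \ref{pv hat J injective}, injectivity is immediate from the block form: $\hat J v = 0$ forces $v_2 = v_1 = 0$ and $v_3=v_4=0$, using $\epsilon\neq0$. For closedness, I would take $v^n \to v$ in $\Xspace^*$ with $\hat J v^n \to y$ in $\Xspace$. Reading off components, $v_2^n \to y_1$ in $H^1$ and $v_2^n \to v_2$ in $\Xspace_2^*$. Both convergences imply distributional convergence, since both spaces embed into tempered distributions modulo low frequencies, and it is here that some care is needed. Hence $v_2 = y_1 \in H^1\cap\Xspace_2^*$. The component $v_1$ is treated the same way, and the $\R^2$ part is trivial.

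For \ref{pv J skew}, I would compute $\langle J(u)v,w\rangle$ termwise using \eqref{eq:J}. The $\hat J$ part is skew because the blocks $\begin{pmatrix}0&1\\-1&0\end{pmatrix}$ and its $\epsilon^{-1}$ multiple are antisymmetric. The $J_{22}$ term contributes
\[
-\epsilon\langle w_2,\Xi_{x_2}\rangle\langle v_2,\Theta_{x_1}\rangle + \epsilon\langle w_2,\Theta_{x_1}\rangle\langle v_2,\Xi_{x_2}\rangle,
\]
which is antisymmetric in $(v,w)$. The cross terms pair off: $J_{23}v_3$ tested against $w_2$ gives $-v_3\langle \Xi_{x_2},w_2\rangle$, which cancels against the $J_{32}$ contribution $w_3\langle v_2,\Xi_{x_2}\rangle$ upon swapping $v$ and $w$. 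The pair $J_{24}$, $J_{42}$ cancels in the same way.

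The main obstacle is making the duality pairings rigorous in finite depth. One must be sure that $\langle \Xi_{x_2}|_{\fluidS_t}, v_2\rangle$ is well defined and symmetric for $v_2\in H^1\cap\Xspace_2^*$, given that $\Xi_{x_2}|_{\fluidS_t}$ is not in $H^{-1}$. It is also delicate that $u\in\Vspace$ gives enough regularity of $\eta$ for $(\Xi_{x_2}|_{\fluidS_t})'\in H^{-1/2}$. I would handle this by writing $\Xi_{x_2}|_{\fluidS_t} = \frac12\tanh(x_1-\bfz_1)+h$ with $h\in H^{1}$, and pairing the $\tanh$ part via $\Xspace_2^*$ through its derivative.
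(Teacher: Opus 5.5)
Your proposal is correct and takes essentially the same route as the paper. You verify \eqref{eq:J} by direct block multiplication, and you prove (i) by a Fourier-side density argument: you use the dense subspace $\mathcal{S}_0\times\mathcal{S}_0\times\R^2$ where the paper truncates $\hat f$ to $\{1/n<|\xi|<n\}$, which is the same idea. For (ii) you use the block structure of $\hat J$, and for (iii) termwise cancellation, again as the paper does. Your extra detail on closedness and on the $\Xi_{x_2}|_{\fluidS_t}$ pairing fills in what the paper calls clear or handles in Remark~\ref{problem with B remark}.
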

\begin{proof}
    To show that $\Dom(\hat J)$ is dense in $\Xspace^*$, it suffices to show that $H^{-1}(\R)\cap \Xspace_2$ is dense in $H^{-1}(\R)$ and that $H^1(\R)\cap \Xspace_2^*$ is dense in $\Xspace_2^*$.

    Let $f\in H^{-1}(\R)$ be given, and for each $n \geq 1$, define
    \[
    	\widehat f_n \colonequals \chi_n \widehat f, \qquad \textrm{where} \quad  \chi_n \colonequals \chi_{\{\frac1n<\abs{\xi}<n\}}.
\]
Then, $f_n\in H^{-1}(\R)\cap\Xspace_2$, since the weights $\jbracket{\xi}^{-2}$ and $\abs{\xi}\tanh\abs{\xi}$ are comparable (with constants depending on $n$) on the support of $\chi_n$. Now,
    \[
        \begin{aligned}
            \n{f_n-f}_{H^{-1}}^2
            &=\int_\R \jbracket{\xi}^{-2}\abs{\hat f_n -\hat f}^2\,d\xi\\
            &=\int_\R \jbracket{\xi}^{-2}\abs{\hat f}^2
            \big(\chi_{\{\abs{\xi}\leq\frac1n\}}
            +\chi_{\{\abs{\xi}\geq n\}}\big)\,d\xi\to 0
        \end{aligned}
    \]
    as $n\to\infty$ by dominated convergence. The same truncation argument shows that $H^1(\R)\cap \Xspace_2^*$ is dense in $\Xspace_2^*$, whose norm is given by 
    \[
    	\n{f}_{\Xspace_2^*}^2 = \int_\R (\abs{\xi}\tanh\abs{\xi})^{-1}\abs{\hat f}^2\,d\xi.
\]
 Indeed, on the support of $\chi_n$ this weight is comparable to $\jbracket{\xi}^{2}$, and the truncation errors vanish as $n\to\infty$ in $\Xspace_2^*$ as before. This finishes the proof of~\ref{pv dense domain J}.

    It is clear from \eqref{eq:Jhat} that $\hat J$ is closed and injective, and hence \ref{pv hat J injective}. Finally, for
    $u\in\nbhdO\cap \Vspace$, the skew-adjointness of $J(u)$ follows from a direct computation with \eqref{eq:J}, which proves~\ref{pv J skew}.
\end{proof}

Next, we determine the energy associated to the wave-borne point vortex system. For an irrotational wave, the kinetic energy is given by $\frac{1}{2}\int_{\fluidD_t}|\mathbf{v}(t)|^2\,dx$, where we recall that $\mathbf{v} = (v_1, v_2)$ is the velocity field. To adapt this to the point vortex case, we use the velocity decomposition \eqref{intro hodge helmholtz} and formally integrate by parts. This produces traces on $\fluidS_t$, plus terms at the vortex center, while the boundary term on the bed $\fluidB$ vanishes by the impermeable condition.  The Helmholtz--Kirchhoff model~\eqref{eq:Helmholtz-Kirchhoff} is equivalent to neglecting the singular energetic contribution at the vortex. Ultimately, this leads us to define the energy functional $E = E(u)$ by
\begin{equation}\label{eq:energy}
    E(u) \colonequals  K(u) + V(u),
\end{equation}
where
\begin{equation}\label{eq:kinetic}
\begin{aligned}
    K(u) &\colonequals  K_0(u) + \epsilon K_1(u) + \epsilon^2 K_2(u) \\
    &\colonequals  \frac{1}{2}\int_{\R} \varphi G(\eta)\varphi\,dx_1 + \epsilon\int_{\R} \varphi\nabla_\perp\Theta\,dx_1 + \frac{1}{2}\epsilon^2 \int_{\R}\Theta|_{\fluidS_t}\nabla_\perp\Theta\,dx_1 + \frac12\epsilon^2 \Gamma_2(\bfz)
\end{aligned}
\end{equation}
is the kinetic energy, and
\begin{equation}\label{eq:potential}
    V(u) \colonequals  \int_{\R}\left(\frac{1}{2}g\eta^2 + \sigma_s(\langle\eta'\rangle - 1)\right)dx_1
\end{equation}
is the potential energy. Here $\Gamma_2(\bfz)$ denotes the evaluation of $\Gamma_2$ at the vortex center, which is finite since $\Gamma_2$ is non-singular there.

Notice that $V$ depends solely on the surface profile. The momentum, which is the conserved quantity generating the symmetry group characterized by \eqref{abstract T'(0) and P' identity}, can be defined as 
\begin{equation}\label{eq:momentum}
    P = P(u) \colonequals  \epsilon\bfz_2 - \int_{\R} \eta'(\varphi + \epsilon\Theta|_{\fluidS_t})\,dx_1.
\end{equation}

It is easy to see that $E, P \in C^\infty(\nbhdO \cap \Vspace;\R)$.
By inspection, we see that $DE$ and $DP$ admit the explicit extensions
\begin{align}
    \nabla E(u) &\colonequals (E'_\eta(u), E'_\varphi(u), \nabla_{\bfz}E(u)),
    \label{eq:gradE}\\
    \nabla P(u) &\colonequals  (P'_\eta(u), P'_\varphi(u), \nabla_{\bfz}P(u)),
    \label{eq:gradP}
\end{align}
in $C^\infty(\nbhdO \cap \Vspace; \Xspace^*)$, with
\begin{equation}\label{eq:gradE_components}
\begin{aligned}
    E'_\eta(u) &\colonequals  \frac{(\varphi')^2 - 2\eta'\varphi'G(\eta)\varphi
    - (G(\eta)\varphi)^2}{2\langle\eta'\rangle^2} + g\eta
    - \sigma_s\left(\frac{\eta'}{\langle\eta'\rangle}\right)' \\
    &\quad + \epsilon\varphi'\Theta_{x_1}|_{\fluidS_t}
    + \frac{\epsilon^2}{2}(|\nabla\Theta|^2)|_{\fluidS_t}, \\
    E'_\varphi(u) &\colonequals  G(\eta)\varphi + \epsilon\nabla_\perp\Theta, \\
    \nabla_{\bfz}E(u) &\colonequals  -\frac{1}{2}\epsilon^2\int_{\R}
    \nabla_\perp(\Theta\,\bfxi)\,dx_1
    - \epsilon\int_{\R}\varphi\nabla_\perp\bfxi\,dx_1
    - \epsilon^2\p_{x_1}\Theta_2(\bfz)\,e_2,
\end{aligned}
\end{equation}
and
\begin{equation}\label{eq:gradP_components}
\begin{aligned}
    P'_\eta(u) &\colonequals  \varphi' + \epsilon\Theta_{x_1}|_{\fluidS_t}, \\
    P'_\varphi(u) &\colonequals  -\eta', \\
    \nabla_{\bfz}P(u) &\colonequals  \epsilon e_2
    + \epsilon\int_{\R}\eta'\bfxi|_{\fluidS_t}\,dx_1.
\end{aligned}
\end{equation}
Thus Assumption~\ref{extend DP and DE assumption} is indeed satisfied.

The next theorem confirms that the Hamiltonian system for this choice of the energy and Poisson map corresponds to the water wave with a point vortex problem in \eqref{eq:system}.

\begin{theorem}[Hamiltonian formulation]\label{thm:hamiltonian}
A function $u \colonequals  (\eta, \varphi, \bfz) \in C^1([0,t_0); \Wspace \cap \nbhdO)$ is a solution of the capillary--gravity water wave problem with a point vortex \eqref{eq:system} if and only if it is a solution to the abstract Hamiltonian system
\begin{equation}\label{eq:hamiltonian-system}
    \frac{du}{dt} = J(u)DE(u),
\end{equation}
where $J = J(u)$ is the Poisson map \eqref{eq:J} and $E$ is the energy functional defined in \eqref{eq:energy}.
\end{theorem}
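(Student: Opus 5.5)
The plan is to compute $J(u)\nabla E(u)$ row by row using the explicit matrix \eqref{eq:J} and the gradient components \eqref{eq:gradE_components}, and to check that each row reproduces the corresponding line of \eqref{eq:system}. The first row is immediate: it gives $\partial_t\eta = E'_\varphi(u) = G(\eta)\varphi + \epsilon\nabla_\perp\Theta$, which is the kinematic condition. The work lies in the $\bfz$ rows and the $\varphi$ row. I will treat the $\bfz$ rows first, because the $\varphi$ row contains $\epsilon\bfxi\cdot\partial_t\bfz$, and once the vortex equation is known that term can be rewritten using the $\bfz$ rows.

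For the third and fourth rows I have to compute
\[
\partial_t\bfz_1 = \jbracket{E'_\eta,\Xi_{x_2}|_{\fluidS_t}} + \epsilon^{-1}\partial_{\bfz_2}E, \qquad \partial_t\bfz_2 = -\jbracket{E'_\eta,\Theta_{x_1}|_{\fluidS_t}} - \epsilon^{-1}\partial_{\bfz_1}E.
\]
The goal is to show that the right-hand sides equal $\partial_{x_1}\Phi(\bfz)-\epsilon\,\partial_{x_1}\Theta_2(\bfz)$ and $\partial_{x_2}\Phi(\bfz)$, respectively. The method is Green's identity in $\fluidD_t$ between the harmonic function $\Phi$ and the harmonic functions $\bfxi$ components, that is, $\Theta_{x_1}$ and $\Xi_{x_2}$. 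These are singular only at $\bfz$, where they behave like derivatives of $\frac{1}{2\pi}\arg(z-\bfz)$; near the singularity they are locally $\partial$ of a harmonic conjugate of a log. The boundary contributions on $\fluidS_t$ produce exactly the integrals $\int\varphi\nabla_\perp\bfxi$ and $\int \nabla_\perp\Phi\,\bfxi$ that appear in $\nabla_\bfz E$ and in the pairings. The contribution from a small circle around $\bfz$ produces $\nabla\Phi(\bfz)$, up to a rotation and a factor $\epsilon^{-1}$ that cancels the $\epsilon$ in front. On the bed, the boundary term vanishes because $\Theta$ and $\Phi$ satisfy the Neumann condition. The terms quadratic in $\epsilon$, namely $\frac12\epsilon^2\int\nabla_\perp(\Theta\bfxi)$, $\frac{\epsilon^2}{2}|\nabla\Theta|^2$ paired with $\bfxi$, and $\epsilon\varphi'\Theta_{x_1}$, are handled the same way. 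Applying Green's identity to $\Theta$ with the $\bfxi$ components leaves the regular part $\Gamma_2$, and differentiating $\frac12\epsilon^2\Gamma_2(\bfz)$ then supplies the self-induced drift $-\epsilon\partial_{x_1}\Theta_2(\bfz)\mathbf{e}_1$.

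Finally, the second row gives
\[
\partial_t\varphi = -E'_\eta + J_{22}E'_\eta + J_{23}\partial_{\bfz_1}E + J_{24}\partial_{\bfz_2}E.
\]
Grouping the terms, this equals $-E'_\eta + \epsilon\Theta_{x_1}|_{\fluidS_t}(\epsilon^{-1}\partial_{\bfz_1}E+\jbracket{E'_\eta,\Theta_{x_1}}) - \epsilon\Xi_{x_2}|_{\fluidS_t}(\epsilon^{-1}\partial_{\bfz_2}E + \jbracket{E'_\eta,\Xi_{x_2}})$. By the third and fourth rows, the parenthetical combinations are $-\partial_t\bfz_2$ and $\partial_t\bfz_1$. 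The result is $-E'_\eta + \epsilon\,\bfxi|_{\fluidS_t}\cdot\partial_t\bfz$, up to checking the sign conventions (which I expect to fit the definition $\bfxi=(\Theta_{x_1},\Xi_{x_2})$), and this is the Bernoulli line of \eqref{eq:system}. The converse direction of the equivalence follows because each step is an identity.

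The main obstacle is the Green's identity step. There are two difficulties. First, in finite depth, $\Xi_{x_2}|_{\fluidS_t}$ does not decay: its limits are $\pm\frac12$ at $\pm\infty$, as noted in Remark~\ref{problem with B remark}. The pairings therefore have to be interpreted as in that remark, and the boundary terms at $|x_1|\to\infty$ must be justified by integrating over truncated domains and passing to the limit, using the decay of $\Phi$ for $\varphi\in\Xspace_2\cap\dot H^{5/2+}$. Second, the branch cut of $\Theta$ (the vertical line through $\bfz$) must be accounted for, since $\Theta$ itself is multivalued even though $\bfxi$ is not. I would first work with $\bfxi$ and $\nabla\Theta$ only, which are single-valued, and leave $\Theta$ undifferentiated only in the terms where its jump integrates to zero against the cut.
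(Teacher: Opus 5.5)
\textbf{A step that fails as written: the Poisson map is misread.} Your overall route is the paper's. You read off the rows of $J(u)\nabla E(u)$, dispose of the $\eta$-row, reduce the $\varphi$-row to the vortex rows through $\epsilon\bfxi|_{\fluidS_t}\cdot\partial_t\bfz$, and obtain the vortex rows from Green's identity with a small circle about $\bfz$. The problem is that the central computation uses the wrong component of $\nabla E$. The second column of \eqref{eq:J} is the $\varphi$-slot of $\Xspace^*$, as you correctly used in the first row. So $J_{22}$, $J_{32}$, $J_{42}$ act on $E'_\varphi(u)=G(\eta)\varphi+\epsilon\nabla_\perp\Theta$, not on $E'_\eta(u)$:
\[
\partial_t\bfz_1 = \jbracket{E'_\varphi(u), \Xi_{x_2}|_{\fluidS_t}} + \epsilon^{-1}\partial_{\bfz_2}E(u), \qquad \partial_t\bfz_2 = -\jbracket{E'_\varphi(u), \Theta_{x_1}|_{\fluidS_t}} - \epsilon^{-1}\partial_{\bfz_1}E(u).
\]

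With $E'_\eta$ in its place, your vortex velocity would contain $g\eta$, the capillary term and the quadratic Bernoulli terms, all paired against $\bfxi|_{\fluidS_t}$. That is not the Hamiltonian vortex equation, and it cannot reduce to $\nabla\Phi(\bfz)$. Since $\Xi_{x_2}|_{\fluidS_t}\to\pm\frac12$, a pairing such as $\jbracket{g\eta,\Xi_{x_2}|_{\fluidS_t}}$ need not even be finite for $\eta\in H^{3+}$. Remark~\ref{problem with B remark} gives meaning to the pairing only for the first component of $\Rng{\hat J}$, which here is $E'_\varphi$. For the same reason, the $\epsilon$-dependent terms you plan to treat ($\frac{\epsilon^2}{2}|\nabla\Theta|^2$ and $\epsilon\varphi'\Theta_{x_1}$) are not the ones that occur. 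With the correct pairing one gets
\[
\partial_t\bfz_2 = \int_\R\big(\varphi\nabla_\perp\Theta_{x_1} - \Theta_{x_1}|_{\fluidS_t}G(\eta)\varphi\big)\,dx_1 + \frac{\epsilon}{2}\int_\R\big(\Theta|_{\fluidS_t}\nabla_\perp\Theta_{x_1} - \Theta_{x_1}|_{\fluidS_t}\nabla_\perp\Theta\big)\,dx_1,
\]
which is exactly a pair of Green's-identity boundary terms.

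Your $\varphi$-row regrouping also attaches the coefficients to the wrong derivatives. Here $J_{23}=-\Xi_{x_2}|_{\fluidS_t}$ multiplies $\partial_{\bfz_1}E$ and $J_{24}=\Theta_{x_1}|_{\fluidS_t}$ multiplies $\partial_{\bfz_2}E$. The correct grouping is therefore $\epsilon\Theta_{x_1}(\jbracket{E'_\varphi,\Xi_{x_2}}+\epsilon^{-1}\partial_{\bfz_2}E)-\epsilon\Xi_{x_2}(\jbracket{E'_\varphi,\Theta_{x_1}}+\epsilon^{-1}\partial_{\bfz_1}E)=\epsilon\bfxi\cdot\partial_t\bfz$. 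Yours produces $-\epsilon(\Theta_{x_1}\partial_t\bfz_2+\Xi_{x_2}\partial_t\bfz_1)$, which is not a matter of sign conventions.

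\textbf{How the paper handles the obstacles you flag.} Once the above is repaired, the paper's treatment is simpler than your plan. Instead of pairing the multivalued $\Theta$ against $\bfxi$, it passes to harmonic conjugates. It uses the stream function $\Psi$ of $\Phi$, normalized to vanish on the bed, with $(\Psi|_{\fluidS_t})'=G(\eta)\varphi$ and $\nabla_\perp\Psi=-\varphi'$. It also uses the single-valued $\Gamma$, with $\nabla_\perp\partial_{x_1}\Theta=(\partial_{x_1}\Gamma|_{\fluidS_t})'$. Integrating by parts turns both integrals into flux integrals over $\fluidS_t$, so no branch cut ever appears.

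The $\Psi$-term is reduced by the divergence theorem in $\fluidD_t\setminus B_r(\bfz)$. The bed terms vanish because $\Psi=\partial_{x_1}\Gamma=0$ there, and the circle yields $\partial_{x_2}\Phi(\bfz)$. The $\epsilon$-term needs no doubly singular circle computation. The divergence theorem is applied on the region between $\fluidS_t$ and $\{x_2=0\}$, which contains no image-lattice point by the definition of $\nbhdO$. Since $\Gamma$ and $\partial_{x_1}\Gamma$ vanish on $\{x_2=0\}$, this term is identically zero. The drift $-\epsilon\,\partial_{x_1}\Theta_2(\bfz)$ in the $\bfz_1$-equation then comes solely from $\epsilon^{-1}\partial_{\bfz_2}\big(\frac12\epsilon^2\Gamma_2(\bfz)\big)$.
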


\begin{proof}
The proof follows a similar computation in \cite[Theorem~5.3]{varholm2020stability}. Written out more explicitly using \eqref{eq:J}, the Hamiltonian system \eqref{eq:hamiltonian-system} reads
\[
\left\{
\begin{aligned}
    \p_t\eta &= E'_\varphi(u)\\
    \p_t\varphi &= \epsilon\bfxi|_{\fluidS_t}\cdot \big(\jbracket{E'_\varphi(u), \partial_{x_2} \Xi|_{\fluidS_t}} + \epsilon^{-1}\p_{\bfz_2}E(u),\, -\jbracket{E'_\varphi(u), \partial_{x_1} \Theta|_{\fluidS_t}}- \epsilon^{-1}\p_{\bfz_1}E(u)\big)\\
    & \qquad  -E'_\eta(u) \\
    \p_t\bfz &=\big(\jbracket{E'_\varphi(u), \partial_{x_2} \Xi|_{\fluidS_t}} + \epsilon^{-1}\p_{\bfz_2}E(u),\, -\jbracket{E'_\varphi(u), \partial_{x_1}\Theta|_{\fluidS_t}} - \epsilon^{-1}\p_{\bfz_1}E(u)\big).
\end{aligned}\right.
\]
By \eqref{eq:gradE_components}, the first of these is exactly the kinematic equation in \eqref{eq:system}. Moreover, the equation for $\p_t\varphi$ agrees with the corresponding one in \eqref{eq:system}, as the final term is simply $\epsilon\bfxi|_{\fluidS_t}\cdot\p_t\bfz$ by the equation for $\p_t\bfz$. Only the equation for the motion of the point vortex remains.

For the vortex dynamics, we find that
\[
    \p_t\bfz_2 = \int_{\R}\big(\varphi\nabla_\perp \partial_{x_1} \Theta
    - \partial_{x_1} \Theta|_{\fluidS_t}\, G(\eta)\varphi\big)\,dx_1
    + \frac{\epsilon}{2}\int_{\R}
    \big(\Theta|_{\fluidS_t}\nabla_\perp\partial_{x_1}\Theta
    - \partial_{x_1} \Theta|_{\fluidS_t}\nabla_\perp\Theta\big)\,dx_1.
\]
To convert these into surface integrals, let $\Psi$ denote the stream function of the irrotational part $\Phi$ of the flow, which satisfies $\nabla\Phi = \nabla^\perp\Psi$. By the kinematic, $\Psi$ is constant on the bed, so we can without loss of generality take it to vanish there. In particular, we have $(\Psi|_{\fluidS_t})^\prime = G(\eta)\varphi$ and $\nabla_\perp\Psi = -\varphi'$.
Moreover, since $\partial_{x_1}\Theta - i\partial_{x_1}\Gamma$ is meromorphic with simple poles on the image lattice $(\bfz + 2i\mathbb{Z}) \cup (\bar\bfz + 2i\mathbb{Z})$, we have that
\[
    \nabla_\perp\partial_{x_1}\Theta = (\partial_{x_1}\Gamma|_{\fluidS_t})',
    \qquad
    \nabla_\perp\partial_{x_1}\Gamma = -(\partial_{x_1}\Theta|_{\fluidS_t})'.
\]
Integrating by parts in $x_1$ and using these identities, the two integrals above become
\[
    \p_t\bfz_2
    = \int_{\fluidS_t} \mathbf{n} \cdot \big(\partial_{x_1}\Gamma\nabla\Psi
    - \Psi\nabla\partial_{x_1}\Gamma\big)\,dS
    + \frac{\epsilon}{2}\int_{\fluidS_t} \mathbf{n} \cdot
    \big(\partial_{x_1}\Gamma\nabla\Gamma - \Gamma\nabla\partial_{x_1}\Gamma\big)\,dS,
\]
where $\mathbf{n}$ is the outward unit normal of $\fluidS_t\cup\fluidB$.

For the second integral, note that $\Gamma$ and $\partial_{x_1} \Gamma$ are harmonic away from the image lattice, and both vanish on the line $\{x_2 = 0\}$. By the definition of $\nbhdO$, the region bounded by $\fluidS_t$ and $\{x_2 = 0\}$ contains no point of the lattice, so applying the divergence theorem over this region yields
\[
    \int_{\fluidS_t} \mathbf{n} \cdot\big(\partial_{x_1}\Gamma\nabla\Gamma
    - \Gamma\nabla\partial_{x_1}\Gamma\big)\,dS = 0.
\]

For the first integral, $\Psi$ is harmonic in $\fluidD_t$ while $\partial_{x_1}\Gamma$ is harmonic in $\fluidD_t\setminus\{\bfz\}$, and the boundary terms on the bed vanish since $\Psi = 0$ and $\partial_{x_1}\Gamma = 0$ there. The divergence theorem therefore gives
\[
    \int_{\fluidS_t} \mathbf{n} \cdot \big(\partial_{x_1}\Gamma\nabla\Psi
    - \Psi\nabla\partial_{x_1}\Gamma\big)\,dS
    = \int_{\partial B_r(\bfz)} \mathbf{n}\cdot\big(\partial_{x_1}\Gamma\nabla\Psi
    - \Psi\nabla\partial_{x_1}\Gamma\big)\,dS
\]
for all $0 < r \ll 1$. Since $\Gamma_2$ is harmonic in the ball
$B_r(\bfz)$, only $\Gamma_1$ contributes in the limit $r\to0$. We then compute that 
\[
    \begin{aligned}
        \int_{\partial B_r(\bfz)} \partial_{x_1}\Gamma_1\, \mathbf{n} \cdot\nabla\Psi\,dS
        &= \frac{1}{4}\int_0^{2\pi}
        \frac{r\sinh(\pi r\cos\theta)}
        {\cosh(\pi r\cos\theta)-\cos(\pi r\sin\theta)}
        \,(\cos\theta,\sin\theta)\cdot(\nabla\Psi)(\bfz + re^{i\theta})
        \,d\theta\\
        &\to\frac{1}{2\pi}\int_0^{2\pi}
        (\cos^2\theta,\sin\theta\cos\theta)\,d\theta
        \cdot(\nabla\Psi)(\bfz)
        = \frac{1}{2}\partial_{x_1}\Psi(\bfz) = \frac{1}{2}\partial_{x_2}\Phi(\bfz)
    \end{aligned}
\]
and
\[
    \begin{aligned}
        \int_{\partial B_r(\bfz)}
        &\Psi\, \mathbf{n} \cdot\nabla\partial_{x_1}\Gamma_{1}\,dS
        = r\int_0^{2\pi}\Psi(\bfz+re^{i\theta})\,
        (\cos\theta,\sin\theta)\\
        &\qquad\cdot\left(
        \frac{\pi}{4}\frac{1-\cosh(\pi r\cos\theta)\cos(\pi r\sin\theta)}
        {(\cosh(\pi r\cos\theta)-\cos(\pi r\sin\theta))^2},\
        -\frac{\pi}{4}\frac{\sinh(\pi r\cos\theta)\sin(\pi r\sin\theta)}
        {(\cosh(\pi r\cos\theta)-\cos(\pi r\sin\theta))^2}
        \right)d\theta\\
        &= -\frac{1}{2\pi}\int_0^{2\pi}
        \left(\int_0^1 \nabla\Psi(\bfz + tre^{i\theta})\,dt\right)
        \cdot (\cos\theta,\sin\theta)\cos\theta\,d\theta + O(r)\\
        &\to -\frac{1}{2}\partial_{x_1}\Psi(\bfz) = -\frac{1}{2}\partial_{x_2}\Phi(\bfz)
    \end{aligned}
\]
as $r\searrow 0$. In total, this gives
\[
    \p_t\bfz_2 = \partial_{x_2}\Phi(\bfz),
\]
which is the second component of the Helmholtz--Kirchhoff model equation. By an essentially identical argument for the third row of \eqref{eq:J}, with $\partial_{x_2}\Xi$ in place of $\partial_{x_1}\Theta$ and with the explicit term $-\epsilon\,\p_{x_1}\Theta_2(\bfz)$ arising from $\nabla_{\bfz}E$ in \eqref{eq:gradE_components}, we obtain
\[
    \p_t \bfz_1 = \partial_{x_1}\Phi(\bfz) - \epsilon\,(\p_{x_1}\Theta_2)(\bfz),
\]
which completes the verification of \eqref{eq:system}.
\end{proof}

\subsection{Symmetry}
Let $T = T(s): \Xspace \to \Xspace$ be the one-parameter family of affine mappings given by
\begin{equation}\label{eq:symmetry-group}
    T(s)u \colonequals  (\eta(\placeholder - s),\ \varphi(\placeholder - s),\
    \bfz + se_1),
    \quad s \in \R,
\end{equation}
representing the invariance of the underlying system with respect to horizontal translations. The linear part of the family is
\[
    dT(s)u = (\eta(\placeholder - s),\ \varphi(\placeholder - s),\ \bfz),
    \quad s \in \R,
\]
and the infinitesimal generator of $T$ is the affine operator
\begin{equation}\label{eq:generator}
    T^\prime(0)u = dT^\prime(0)u + T^\prime(0)0 = (-\p_{x_1}\eta,\ -\p_{x_1}\varphi,\ 0)
    + (0,\ 0,\ e_1),
\end{equation}
with domain $\Dom(T^\prime(0)) = \Xspace^{3/2}$.

\begin{lemma}[Properties of $T$]\label{lem:symmetry}
The group $T(\placeholder)$ satisfies Assumption~\ref{abstract symmetry assumption}.
\end{lemma}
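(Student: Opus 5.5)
We verify the items of Assumption~\ref{abstract symmetry assumption} one at a time. Most reduce to the fact that translation $\varphi\mapsto\varphi(\placeholder-s)$ is the Fourier multiplier $e^{-is\xi}$.

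\emph{Invariance, flow, unitarity, continuity.} The definition of $\nbhdO$ is invariant under $T(s)$: shifting $\eta$ by $s$ and $\bfz_1$ by $s$ preserves $\eta>-1$ and $-1<\bfz_2<\eta(\bfz_1)<-\bfz_2$. Each $\Xspace^s$ is invariant under $dT(s)$, since $|e^{-is\xi}|=1$ leaves every weight ($\jbracket{\xi}^{2s+1}$, $|\xi|^{2s}$, $|\xi|\tanh|\xi|$) unchanged. For the same reason $dT(s)$ is unitary on $\Xspace$ and an isometry on $\Vspace$ and $\Wspace$. Since $I$ is a Fourier multiplier on the first two components and the identity on $\R^2$, the set $I^{-1}\Dom(\hat J)$ is likewise invariant. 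The flow property is immediate from the formula \eqref{eq:symmetry-group}. Strong continuity follows from dominated convergence: $\int w(\xi)|e^{-is\xi}-1|^2|\hat f|^2\,d\xi\to0$.

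\emph{Affine part.} We have $T(s)0=(0,0,se_1)$, which is smooth in $s$ with values in $\Wspace$. Moreover $\n{T(s)0}_\Wspace=|s|=\n{T(s)0}_\Xspace$, so $\omega=\id$ works.

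\emph{Commutativity, generator, density.} $\hat JI$ is built from the multipliers $1-\p_{x_1}^2$ and $|\p_{x_1}|\tanh|\p_{x_1}|$ in the $(\eta,\varphi)$ block, together with constant matrices on $\R^2$. Hence it commutes with $dT(s)$, which acts only by translation there, and the same reasoning gives \eqref{abstract derivative commutation identity}. For $dT(s)B(u)=B(T(s)u)dT(s)$, we use that $\Theta,\Xi$ depend on $x-\bfz$ only through $x_1-\bfz_1$, while the reflected vortex $\bar\bfz$ is translated along with $\bfz$. Consequently $\Theta_{x_1}|_{\fluidS}$ and $\Xi_{x_2}|_{\fluidS}$ computed from $T(s)u$ are the $s$-translates of those computed from $u$. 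The pairings $\jbracket{\Theta_{x_1}|_{\fluidS},\dot\eta}$ are therefore invariant under simultaneous translation. For $\Xi_{x_2}$, which is not in $H^{-1}$, we interpret the pairing as in Remark~\ref{problem with B remark} and check that the interpretation is translation-compatible.

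The key identity \eqref{abstract T'(0) and P' identity} is a direct computation with \eqref{eq:J} and \eqref{eq:gradP_components}:
\begin{itemize}
\item the first row gives $P'_\varphi=-\eta'$;
\item the third and fourth rows give $\jbracket{-\eta',\Xi_{x_2}}+\epsilon^{-1}\p_{\bfz_2}P = 1$ and $0$, respectively;
\item the second row gives $-\varphi'$ once the $\epsilon\Theta_{x_1}$ terms cancel against $J_{22},J_{23},J_{24}$.
\end{itemize}
In the fourth row one must check that $\jbracket{\eta',\Theta_{x_1}|_\fluidS}=\epsilon^{-1}\p_{\bfz_1}P$. This uses $\p_{\bfz_1}\Theta=-\Theta_{x_1}$ together with integration by parts. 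The statement $\nabla P(u)\in\Dom(\hat J)$ for $u\in\Xspace^{3/2}$ requires $\varphi'+\epsilon\Theta_{x_1}|_\fluidS\in H^{-1}\cap\Xspace_2$ and $\eta'\in H^1\cap\Xspace_2^*$. Both hold by the regularity of $\Xspace^{3/2}$, because $\eta'$ has a Fourier factor $\xi$ killing the low-frequency singularity of $(|\xi|\tanh|\xi|)^{-1}$.

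For density, we note that Schwartz functions with Fourier support away from $0$ and $\infty$ lie in $\Dom(T'(0)|_\Wspace)\cap\Rng\hat J$. The truncation argument of Lemma~\ref{lem:properties-of-J} then shows these are dense in $\Xspace$.

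\emph{Conservation.} $E(T(s)u)=E(u)$ follows from translation invariance of each term. This holds for $G(\eta)$ (the domain is translated), for the $\Theta$ integrals (which depend on $x_1-\bfz_1$), for $\Gamma_2(\bfz)$ (which depends only on $\bfz_2$), and for $V$.

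The main obstacle is the generator identity in the second component. The terms involving $\Xi_{x_2}|_\fluidS$ are not in $H^{-1}$, so the pairing $\jbracket{\eta',\Xi_{x_2}|_\fluidS}$ must be handled via Remark~\ref{problem with B remark}. The plan there is to integrate by parts to $-\jbracket{\eta,(\Xi_{x_2}|_\fluidS)'}$ and verify the cancellation with $\p_{\bfz_2}P$ explicitly.
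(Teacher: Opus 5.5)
Your proposal is correct and follows the same route as the paper. Both verify Assumption~\ref{abstract symmetry assumption} item by item, check the generator identity \eqref{abstract T'(0) and P' identity} by direct computation from \eqref{eq:J} and \eqref{eq:gradP_components}, and get density from the truncation argument of Lemma~\ref{lem:properties-of-J}; you simply add detail, such as the row-by-row cancellation and the treatment of the $\Xi_{x_2}|_{\fluidS}$ pairing via Remark~\ref{problem with B remark}. Checking $\nabla P(u)\in\Dom(\hat J)$ on all of $\Xspace^{3/2}$ is fine, since that set contains $\Dom(T'(0)|_\Vspace)=\Xspace^{2+}$ and lies inside $\Vspace$.
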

\begin{proof}
Parts \ref{invariances}, \ref{group flow property}, and \ref{unitary assumption} are obvious from the definition of $T$, and the strong continuity \ref{strong continuity} in the respective spaces is likewise straightforward. Observe also that $T(s)0 = s(0, 0, e_1)$, which has norm $|s|$ in both $\Xspace$ and $\Wspace$. Thus part \ref{affine bound assumption} holds with $\omega(t) = t$.

For part \ref{commutativity assumption}, note that $I^{-1}\Dom(\hat{J})$ is invariant under $dT(s)$, which is therefore the common domain of definition for both sides of the first equation in \eqref{abstract commutation identity}. Verifying equality in both equations for all $s \in \R$ is then just a matter of inserting the definitions.

For part \ref{T'(0) assumption}, observe that $\Dom(T^\prime(0)|_\Vspace) = \Xspace^{2+}$. The momentum gradient $\nabla P(u) \in \Dom(\hat{J})$ for any $u \in \nbhdO \cap  \Dom(T^\prime(0)|_\Vspace)$ as a consequence of its formula in \eqref{eq:gradP} and  \eqref{eq:gradP_components}. Moreover, \eqref{abstract T'(0) and P' identity} and \eqref{abstract derivative commutation identity} can be obtained by direct computation.

To verify part \ref{range density}, note that $\hat J$ interchanges the first two slots of $\Xspace^*$, so that
\begin{align*}
    \Rng{\hat{J}} &= (H^{1}(\R) \cap\Xspace_2^*)
    \times (H^{-1}(\R) \cap \Xspace_2) \times \R^2,\\
    \Dom(T^\prime(0)|_\Wspace) &= \Xspace^{7/2+},
\end{align*}
and hence
\begin{equation*}
    \Dom(T^\prime(0)|_\Wspace) \cap \Rng{\hat{J}} =
    (H^{4+}(\R) \cap \Xspace_2^*) \times
    (H^{-1}(\R) \cap \dot{H}^{7/2+}(\R) \cap \Xspace_2)
    \times \R^2,
\end{equation*}
which is dense in $\Xspace$ by the truncation argument used in Lemma~\ref{lem:properties-of-J}.

Finally, the conservation of energy \ref{T conserves energy} under the group is immediate given the translation invariant nature of $E$ in \eqref{eq:energy}, \eqref{eq:kinetic}, and \eqref{eq:potential}.
\end{proof}

\subsection{Bound states: small-amplitude steady wave-borne point vortices}
We now reexpress the solutions along the curve $\cmconf_\loc$ constructed in Theorem~\ref{thm:existence} in the original (dimensional) physical variables. Since we have from the theorem that
\begin{equation*}
	\p_{\zeta_1}\realpart f^{\beta,F,\Bs}(\placeholder + i) = 1 + \p_{\zeta_2} w^{\beta,F,\Bs}(\placeholder + i) = 1 + O(\beta^2),
\end{equation*}
the restriction $\realpart f^{\beta, F,\Bs}|_\Gamma$ is invertible for $|\beta|\ll 1$. Let $\zun^{\beta,F,\Bs}\colonequals\zun^{\beta,F,\Bs}(x_1)$ be the inverse.

In particular, since $w^{\beta,F,\Bs}\in C^{k+\alpha}(\overline\Omega)\cap H^{k+\frac{1}{2}}(\Omega)$ by the theorem, we can show that $\zun^{\beta,F,\Bs}\in \dot H^{k}(\R)$ and $(\zun^{\beta,F,\Bs})'\in C^{k-1+\alpha}(\R)\cap H^{k-1}(\R)$. The free surface displacement in physical variables is then
\[
    \eta^{\beta,F,\Bs}(x_1) \colonequals
    w^{\beta,F,\Bs}(\xiofx^{\beta,F,\Bs}(x_1)+i)
    \in C^{k+\alpha}(\R)\cap H^{k}(\R).
\]

The point vortex in the physical domain sits at $(0, b^{\beta,F,\Bs})$, where $b^{\beta,F,\Bs} = \imagpart f^{\beta,F,\Bs}(i\beta)$ is the vertical coordinate of the vortex center from \eqref{eq:b-beta-relation}.

The trace of the irrotational velocity potential on the free surface $\fluidS$ is obtained as follows. Here and below, $c = F\sqrt{g}$ denotes the dimensional wave speed (recall that $d = 1$). The potential $c\,\realpart\big(W\circ(f^{\beta,F,\Bs})^{-1}\big)$ generates the relative velocity field $c\mathbf{u}$, whereas $\varphi$ in \eqref{eq:system} is the trace of the potential $\Phi$ of the irrotational part of the absolute velocity $\bfv$, which vanishes at infinity. Adding $c\,x_1$ converts the former to the potential of the absolute velocity, and subtracting the vortex potential $\epsilon^{\beta,F,\Bs}\Theta(\placeholder;(0,b^{\beta,F,\Bs})) = c\gamma\,\Theta(\placeholder;(0,b^{\beta,F,\Bs}))$ isolates the irrotational part. Restricting to the free surface, this gives
\begin{align}\label{eq:phi-physical}
    \varphi^{\beta,F,\Bs}(x_1)
    &= c\bigl(x_1 - \zun^{\beta,F,\Bs}(x_1)\bigr) \notag\\
    &\quad + \frac{c\gamma^{\beta,F,\Bs}}{2\pi}\arg \left(
    \frac{\sinh \bigl(\frac{\pi}{2}(\zun^{\beta,F,\Bs}(x_1)+i(1-\beta))\bigr)}
    {\sinh \bigl(\frac{\pi}{2}(\zun^{\beta,F,\Bs}(x_1)+i(1+\beta))\bigr)}
    \right) \notag\\
    &\quad - \frac{c\gamma^{\beta,F,\Bs}}{2\pi}\arg \left(
    \frac{\sinh \bigl(\frac{\pi}{2}(x_1+i(\eta^{\beta,F,\Bs}(x_1)-b^{\beta,F,\Bs}))\bigr)}
    {\sinh \bigl(\frac{\pi}{2}(x_1+i(\eta^{\beta,F,\Bs}(x_1)+b^{\beta,F,\Bs}))\bigr)}
    \right).
\end{align}
Here, note that $(\varphi^{\beta,F,\Bs})'\in C^{k-1+\alpha}(\R)\cap H^{k-1}(\R)$, and thus $\varphi^{\beta,F,\Bs}\in \dot H^{k}(\R)\cap \Xspace_2$ by the characterization of $\Xspace_2$ given after \eqref{eq:scale}. In particular, using \eqref{eq:w-kappa-asymptotics}, \eqref{eq:gamma-asymptotics}, and \eqref{eq:b-asymptotics}, we have the following asymptotic expansions in the dimensional and physical variables:

\begin{equation}\label{eq:asymptotics-physical}
\begin{aligned}
    \eta^{\beta,F,\Bs} &= \frac12\beta^2\,\ddot{w}^{F,\Bs}|_\Gamma
    + O(\beta^4), \\
    \varphi^{\beta,F,\Bs} &= -\frac12c\beta^2\,
    i\coth(D)\,\ddot{w}^{F,\Bs}|_\Gamma 
    + O(\beta^3), \\
    b^{\beta,F,\Bs} &= -1 + \beta
    + \tfrac{1}{2}\,\p_{\zeta_2}\ddot{w}^{F,\Bs}(0)\,\beta^3
    + O(\beta^4).
\end{aligned}
\end{equation}

The idea here is to find a one-dimensional section of three-dimensional manifold $\cmconf_\loc$ that is parameterized by the wave speed, as in the formulation of Assumption~\ref{bound state assumption}.  Since the dimensional vortex strength $\epsilon = c\gamma$ appears in the energy functional and the Poisson 
map, the stability analysis must be carried out along a family of solutions with $\epsilon$ held fixed. To arrange this, while allowing $\beta$ to vary, we must allow the wave speed $c$ to vary in a $\beta$-dependent way.

Recall that from the beginning of the section we have the normalization $d = 1$. We now further normalize $g = 1$ and fix the surface tension coefficient $\sigmas > 0$ once and for all, so that
\[
    F = c, \qquad \Bs = \frac{\sigmas}{c^2}.
\]
Fix $c_0 \in (0,1)$ with $c_0^2 < 3\sigmas$, so that $(F_0, \Bs^0) \colonequals (c_0, \sigmas/c_0^2)$ satisfies the hypotheses of Theorem~\ref{thm:existence}. For $|\beta| \ll 1$ and $|c - c_0| \ll 1$, we abbreviate
\[
    (\eta^{\beta,c}, \varphi^{\beta,c}, b^{\beta,c}, \gamma^{\beta,c}, \varkappa^{\beta,c}) \colonequals (\eta^{\beta,c,\sigmas c^{-2}}, \varphi^{\beta,c,\sigmas c^{-2}}, b^{\beta,c,\sigmas c^{-2}}, \gamma^{\beta,c,\sigmas c^{-2}}, \varkappa^{\beta,c,\sigmas c^{-2}}),
\]
and likewise $\ddot w^{c} \colonequals \ddot w^{c,\sigmas c^{-2}}$. Since $c \mapsto (c, \sigmas c^{-2})$ is real-analytic, so is $(\beta,c) \mapsto (\eta^{\beta,c},\varphi^{\beta,c},b^{\beta,c})$ by Theorem~\ref{thm:existence}. Fix a reference altitude $\beta_0 \in (0, \beta_*)$, which will later be taken small, and set
\[
    \varkappa_0 \colonequals  \varkappa^{\beta_0, c_0}, \qquad
    \gamma_0 \colonequals  \gamma(\varkappa_0, \beta_0), \qquad
    \epsilon_0 \colonequals  c_0\gamma_0.
\]

We seek a curve $\beta \mapsto c(\beta)$ along which $c\,\gamma \equiv \epsilon_0$, that is, a curve of zeros of
\[
    G(c, \beta) \colonequals c - \frac{c_0\gamma_0}{\gamma^{\beta,c}},
\]
which is well defined for $(c,\beta)$ near $(c_0,\beta_0)$ since $\gamma \approx 4\pi\beta \neq 0$ there, and $G(c_0, \beta_0) = 0$ by construction. A direct computation using \eqref{eq:gamma-asymptotics} and \eqref{eq:w-kappa-asymptotics} gives
\[
    \begin{aligned}
        (\p_cG)(c_0,\beta_0) &= 1
        + \frac{c_0}{\gamma_0}\,(\p_\varkappa\gamma)(\p_c\varkappa)
        \big|_{(c_0,\beta_0)} = 1 + O(\beta_0^4),\\
        (\p_\beta G)(c_0,\beta_0)
        &= \frac{4\pi c_0}{\gamma_0}\big(1 + O(\beta_0^2)\big).
    \end{aligned}
\]
Since $(\p_cG)(c_0,\beta_0) \neq 0$ for $\beta_0$ sufficiently small, the implicit function theorem yields a unique real-analytic curve $c = c(\beta)$, defined for $\abs{\beta - \beta_0} \ll 1$, with $G(c(\beta),\beta) = 0$. Moreover,
\[
    c(\beta) = c_0 + \Lambda_0(\beta-\beta_0)
    + O(\abs{\beta-\beta_0}^2),
    \qquad
    \Lambda_0 = -\frac{4\pi c_0}{\gamma_0}\big(1 + O(\beta_0^2)\big)
    = -\frac{c_0}{\beta_0}\big(1 + O(\beta_0^2)\big).
\]
Note that $\abs{\Lambda_0}\to\infty$ as $\beta_0 \to 0$: to leading order $c \propto 1/\gamma$ along the family, so the wave speed is inversely proportional to the non-dimensionalized vortex strength. This is an artifact of the fixed-$\epsilon$ parametrization and causes no difficulty, as $\beta_0$ is fixed throughout the stability analysis.

The family of solutions for the stability analysis is defined as 
\[
    U_{c(\beta)} = \big(\eta^{\beta,c(\beta)},\
    \varphi^{\beta,c(\beta)},\ b^{\beta,c(\beta)}e_2\big).
\]

\begin{lemma}\label{lem:assumption5}
The family $\{U_{c(\beta)}\}$ satisfies
Assumption~\ref{bound state assumption}.
\end{lemma}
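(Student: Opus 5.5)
We verify the four parts of Assumption~\ref{bound state assumption} in turn. The family is reparametrized by $c$ instead of $\beta$. Since $\Lambda_0 = -\frac{c_0}{\beta_0}(1+O(\beta_0^2)) \neq 0$, the curve $\beta \mapsto c(\beta)$ is a real-analytic diffeomorphism near $\beta_0$. Its inverse $c \mapsto \beta(c)$ is therefore real-analytic on an open interval $\cinterval \ni c_0$, and we write $U_c \colonequals U_{c(\beta(c))}$.

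\textbf{Part (i).} We use the real-analytic dependence of $w^{\beta,F,\Bs}$ in $\scrW \cap H^{k+1/2}(\Omega)$ from Theorem~\ref{thm:existence}, together with the explicit formulas for $\eta^{\beta,c}$, $\varphi^{\beta,c}$ and $b^{\beta,c}$. Choosing $k$ large (as permitted in Section~\ref{stability section}), these maps are real-analytic into $H^{k}(\R)$, $\dot H^{k}(\R)\cap\Xspace_2$ and $\R$, respectively.

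\begin{itemize}
\item The map $w \mapsto \zun$ is the inverse of $\realpart f|_\Gamma$, a perturbation of the identity. By the analytic inverse function theorem in $\id + H^{k}$, it depends analytically on $w$.
\item $\eta = w(\zun + i)$ is then a composition with an analytic map, hence analytic.
\item In \eqref{eq:phi-physical}, the term $x_1 - \zun$ lies in $\dot H^{k}$, since its derivative lies in $H^{k-1}$.
\item The two $\arg$ terms contain the same vortex singularity structure. Their difference near the surface is a smooth function of $(\zun, \eta, b, \beta)$ whose derivative decays exponentially. Hence their derivative lies in $H^{k-1}$.
\end{itemize}

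For membership in $\Xspace_2$, we use the characterization that $\varphi' \in H^{k-1}$ implies $\varphi \in \dot H^{k}\cap\Xspace_2$. The low-frequency weight $|\xi|^2$ is exactly $\|\varphi'\|_{L^2}^2$. This gives $c \mapsto U_c \in \Wspace$ of class $C^1$ (indeed analytic). For $U_c \in \nbhdO$, note that $\eta = O(\beta^2)$, $b = -1+\beta+O(\beta^3)$ and $\beta > 0$. This gives $-1 < b < \eta(0) < -b$, and $\eta > -1$ holds trivially.

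\textbf{Part (ii).} This is a regularity and domain check.
\begin{itemize}
\item $\Dom(T'''(0))$ requires $\Xspace^{7/2}$ regularity, i.e.\ $\eta \in H^{4}$ and $\varphi \in \dot H^{7/2}$. This holds for $k \ge 5$.
\item Since $T'(0)U_c = (-\eta', -\varphi', e_1)$, we compute $\hat J I T'(0) U_c = \bigl(-G(0)\varphi', (1-\partial_{x_1}^2)\eta', \epsilon^{-1}(0,-1)\bigr)$, up to the ordering in \eqref{eq:Jhat}.
\item This requires $I T'(0)U_c \in \Dom(\hat J)$, i.e.\ $(1-\partial^2)\eta' \in H^{-1}\cap \Xspace_2$ and $G(0)\varphi' \in H^1 \cap \Xspace_2^*$.
\item The only delicate point is low frequencies. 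Both $\eta'$ and $G(0)\varphi'$ carry a factor of $\xi$ on the Fourier side, which cancels the singular weights $(|\xi|\tanh|\xi|)^{\pm1}$. Note that $\varphi' \in L^2$, so $G(0)\varphi'$ has symbol $\xi^2\tanh|\xi|/|\xi|\cdot$ and is harmless.
\item Membership in $\Dom(T'(0)|_\Wspace) = \Xspace^{7/2+}$ then needs a few more derivatives, roughly $k \ge 7$. This is where taking $k$ sufficiently large is used.
\end{itemize}

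\textbf{Parts (iii) and (iv).} Non-degeneracy is immediate, since the $\bfz$-component of $T'(0)U_c$ is $e_1 \ne 0$. For (iv), the orbit is not periodic. Moreover $\|T(s)U_c - U_c\|_\Xspace \ge |s e_1| = |s| \to \infty$, so the $\liminf$ is positive.

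\textbf{Main obstacle.} The main difficulty is part (i) for the $\varphi$-component in the nonstandard space $\Xspace_2$: we must show $c\mapsto\varphi^{\beta(c),c}$ is $C^1$ into $\dot H^{5/2+}\cap\Xspace_2$. This reduces to analyticity into $H^{k-1}$ of $\varphi'$, which we get by differentiating \eqref{eq:phi-physical} and bounding the exponentially decaying kernels, uniformly in parameters.
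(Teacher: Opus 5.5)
Your proposal is correct and follows the paper's proof: you reparametrize by $c$ using $c'(\beta_0)=\Lambda_0\neq0$, get (i)--(ii) from real-analytic dependence and high regularity ($k\gg1$) of $(\eta^{\beta,c},\varphi^{\beta,c},b^{\beta,c})$, get (iii) from the $\mathbf{e}_1$ component of $T'(0)U_c$, and get (iv) from $\|T(s)U_c-U_c\|_{\Xspace}\ge|s|$, while your checks that $U_c\in\nbhdO$ and of the low frequencies of $\hat J I T'(0)U_c$ supply details the paper leaves implicit. One small correction: $\realpart f|_\Gamma-\id$ is the trace of the harmonic conjugate of $w$, which tends to the nonzero limits $\pm\frac12\int w|_\Gamma\,d\zeta_1$ at $\pm\infty$ (at leading order $\int\ddot w^{F,\Bs}|_\Gamma\,d\zeta_1=-2\pi^2m(0)^{-1}\int\sech^2(\tfrac{\pi}{2}\zeta_1)\,d\zeta_1\neq0$), so the inverse-function argument for $\zun$ should be run in a space such as $\{g\in C^{k+\alpha}_\bdd(\R): g'\in H^{k-1}(\R)\}$ rather than $\id+H^k$, which is consistent with the paper asserting only $\zun'\in H^{k-1}(\R)$.
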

\begin{proof}
The curve $\beta \mapsto c(\beta)$ is real-analytic by the implicit function theorem, and $(\beta,c)\mapsto (\eta^{\beta,c},\,\varphi^{\beta,c},\,b^{\beta,c}\mathbf{e}_2)$ is real-analytic, as shown above. Thus, $\beta\mapsto U_{c(\beta)}$ is real-analytic. Since moreover $c'(\beta_0) = \Lambda_0 \neq 0$, the map $\beta \mapsto c(\beta)$ is a real-analytic diffeomorphism from a neighborhood of $\beta_0$ onto an open interval $\cinterval \ni c_0$, and we write $U_c \colonequals U_{c(\beta(c))}$ for $c \in \cinterval$. In particular, Assumption~\ref{bound state assumption}\ref{bound states improved regularity} holds.

From above, we have
\[
    \eta^{\beta,c(\beta)} \in C^{k+\alpha}(\R)\cap
    H^{k}(\R), \qquad
    \varphi^{\beta,c(\beta)} \in 
    \dot H^{k}(\R)\cap\Xspace_2,
\]
for $k \gg 1$. These embed into the spaces required in Assumption~\ref{bound state assumption}(ii). Likewise, by \eqref{eq:generator}, we have that
\[
	T^\prime(0)U_{c(\beta)} = (-\p_{x_1}\eta^{\beta,c(\beta)},\, -\p_{x_1}\varphi^{\beta,c(\beta)},\, \mathbf{e}_1).
\]
The third component is obviously non-vanishing, and hence $T'(0)U_{c(\beta)} \neq 0$. This confirms Assumption~\ref{bound state assumption}\ref{bound state non-degeneracy}. 

Finally, for $s \neq 0$, the third components of $T(s)U_{c(\beta)}$ and
$U_{c(\beta)}$ differ:
\[
    \big|\big(se_1 + b^{\beta,c(\beta)}e_2\big)
    - b^{\beta,c(\beta)}e_2\big| = |s| > 0,
\]
so $T(s)U_{c(\beta)} \neq U_{c(\beta)}$, and in fact $\n{T(s)U_{c(\beta)} - U_{c(\beta)}}_{\Xspace} \ge |s| \to \infty$ as $|s|\to\infty$; the orbit is non-periodic. 
\end{proof}

By Theorem~\ref{abstract stability theorem}, orbital stability of the family $\{U_{c(\beta)}\}$ will follow once we verify that the moment of instability $d(c)$ from \eqref{abstract d definition} has positive second derivative along the family.

\begin{theorem}[Moment of instability]
\label{thm:dpp}
For $0 < \beta_0 \ll 1$, there exists $\delta > 0$ such that if $|\beta - \beta_0| < \delta$, then $d''(c(\beta)) > 0$. 
\end{theorem}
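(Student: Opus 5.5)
We use the standard identity $d'(c) = -\mom(U_c)$, which holds because $U_c$ is a critical point of $\augHam$. Indeed, differentiating $d(c) = \eng(U_c) - c\mom(U_c)$ gives
\[
	d'(c) = \jbracket{D\eng(U_c) - cD\mom(U_c), \partial_c U_c} - \mom(U_c) = -\mom(U_c).
\]
Here we use Assumption~\ref{bound state assumption}\ref{bound states improved regularity}, and the fact that $\eng$ and $\mom$ are $C^3$ on $\nbhdO\cap\Vspace$. The energy $\eng$ and Poisson map depend on $\epsilon$, and $\epsilon \equiv \epsilon_0$ along the section. Hence no extra $\epsilon$-derivative term appears, and
\[
	d''(c(\beta)) = -\frac{d}{dc}\mom(U_c) = -\frac{1}{c'(\beta)}\frac{d}{d\beta}\mom(U_{c(\beta)}).
\]
Since $c'(\beta) = \Lambda_0 + O(|\beta-\beta_0|)$ with $\Lambda_0 = -\frac{c_0}{\beta_0}(1+O(\beta_0^2)) < 0$, it suffices to show that $\frac{d}{d\beta}\mom(U_{c(\beta)}) > 0$ near $\beta_0$ with a definite lower bound. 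Continuity in $\beta$ then gives the $\delta$ in the statement.

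Next we expand $\mom$ along the family using \eqref{eq:momentum}:
\[
	\mom(U_{c(\beta)}) = \epsilon_0 b^{\beta,c(\beta)} - \int_\R (\eta^{\beta,c(\beta)})'\bigl(\varphi^{\beta,c(\beta)} + \epsilon_0\Theta|_{\fluidS}\bigr)\,dx_1.
\]
The vortex term is the dominant one. By \eqref{eq:asymptotics-physical}, $b^{\beta,c} = -1+\beta+O(\beta^3)$, uniformly for $c$ near $c_0$ and with derivatives, by real-analyticity. Therefore
\[
	\frac{d}{d\beta}\bigl(\epsilon_0 b^{\beta,c(\beta)}\bigr) = \epsilon_0\bigl(1 + O(\beta^2) + \partial_c b\cdot c'(\beta)\bigr).
\]
We must check the cross term $\partial_c b\cdot c'(\beta)$, because $c'(\beta) = O(1/\beta_0)$ is large. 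However, $\partial_c b^{\beta,c} = O(\beta^3)$, since the $c$-dependence enters only through $\ddot w^c$ at order $\beta^3$. The product is thus $O(\beta_0^2)$, which is harmless. Since $\epsilon_0 = c_0\gamma_0 \approx 4\pi c_0\beta_0$, this gives
\[
	\frac{d}{d\beta}\bigl(\epsilon_0 b\bigr) = 4\pi c_0\beta_0\bigl(1+O(\beta_0^2)\bigr).
\]

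The remaining surface integrals are the main obstacle, and we aim to show they are $O(\beta_0^3)$ after differentiation. The term $\int \eta'\varphi$ is $O(\beta^4)$, since $\eta,\varphi = O(\beta^2)$ in the relevant spaces. Differentiating in $\beta$ produces $O(\beta^3)$ plus a $c$-derivative term. That term is $O(\beta^4)\cdot O(1/\beta_0) = O(\beta_0^3)$, because $\partial_c$ preserves the $O(\beta^4)$ order by analyticity in $c$ of the expansions.

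The term $\epsilon_0\int \eta'\Theta|_{\fluidS}$ needs care in finite depth. As noted in Remark~\ref{problem with B remark}, $\Theta|_\fluidS$ does not decay: it tends to distinct constants as $x_1\to\pm\infty$. We handle this by integrating by parts to get $-\epsilon_0\int \eta\,(\Theta|_\fluidS)'$. Here $(\Theta|_\fluidS)' = \Theta_{x_1} + \eta'\Theta_{x_2}$ is localized, with size $O(\beta)$ because the vortex and its mirror nearly cancel when $\beta$ is small. This bounds the term by
\[
	\epsilon_0 \cdot O(\beta^2)\cdot O(\beta) = O(\beta_0^4).
\]
Its $\beta$-derivative, including the $c'(\beta)$ contribution, is then $O(\beta_0^3)$.

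Combining these estimates,
\[
	\frac{d}{d\beta}\mom = 4\pi c_0\beta_0\bigl(1+O(\beta_0^2)\bigr) > 0,
\]
and so
\[
	d''(c) = \frac{4\pi\beta_0^2}{1+O(\beta_0^2)} > 0.
\]
This holds for $0<\beta_0\ll 1$ and $|\beta-\beta_0|<\delta$.
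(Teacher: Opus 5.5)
Your proposal is correct and follows essentially the same route as the paper: both use $d'(c) = -\mom(U_c)$, differentiate along the fixed-$\epsilon$ section parametrized by $\beta$, isolate the dominant contribution $\epsilon_0\,\partial_\beta b \approx 4\pi c_0\beta_0$ from the $\epsilon\bfz_2$ term of the momentum, and then divide by $c'(\beta_0)\approx -c_0/\beta_0$. The only difference is cosmetic: you differentiate the scalar $\mom(U_{c(\beta)})$ term by term and handle the non-decaying $\Theta|_\fluidS$ by integrating by parts, whereas the paper pairs $\nabla\mom(U_c)$ with $\partial_\beta U_{c(\beta)}$, where that localization is already built into $P'_\eta = \varphi' + \epsilon\Theta_{x_1}|_\fluidS$.
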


\begin{proof}
From \eqref{abstract d definition} together with the criticality of $U_c$, we have $d'(c) = -P(U_c)$.

Differentiating in $\beta$,
\begin{equation}\label{eq:dpp-formula}
    c'(\beta)\, d''(c(\beta))
    = - \jbracket{\nabla P(U_{c(\beta)}),\, \p_\beta U_{c(\beta)}}_{\Xspace^*\!,\,\Xspace},
\end{equation}
where $\nabla P \in C^0(\nbhdO\cap \Vspace; \Xspace^*)$ is the extension of $DP$ given by \eqref{eq:gradP}. The right-hand side is continuous in $\beta$, since $\beta \mapsto U_{c(\beta)}$ is real-analytic as shown in the proof of Lemma~\ref{lem:assumption5} and $u \mapsto \nabla P(u)$ is continuous by Assumption~\ref{extend DP and DE assumption}.

Moreover,
\[
    c'(\beta_0) = \Lambda_0
    = -\frac{c_0}{\beta_0}\big(1 + O(\beta_0^2)\big) \neq 0
\]
for $0 < \beta_0$ sufficiently small, and by continuity $c'(\beta) \neq 0$ near $\beta_0$. Dividing \eqref{eq:dpp-formula} by $c'(\beta)$ shows that $d''(c(\beta))$ is continuous near $\beta_0$.

We now compute the two factors on the right of \eqref{eq:dpp-formula} at $\beta = \beta_0$. First, by \eqref{eq:asymptotics-physical},
\[
    \p_\beta\big|_{\beta_0} U_{c(\beta)}
    = (0,\, 0,\, \mathbf{e}_2) + O(\beta_0).
\]
Here, the chain-rule contributions through $c(\beta)$ are harmless, despite the fact that $c'(\beta_0) = O(\beta_0^{-1})$, as each of $\p_c\eta^{\beta,c}$, $\p_c\varphi^{\beta,c}$, $\p_c b^{\beta,c}$ is $O(\beta_0^2)$, so $\p_c U c^\prime(\beta_0) = O(\beta_0)$ and is absorbed in the error.

Second, by \eqref{eq:gradP} and \eqref{eq:gradP_components},
\[
    \nabla P(U_{c(\beta_0)})
    = (0,\, 0,\, \epsilon_0\, \mathbf{e}_2) + O(\beta_0^2),
    \qquad \epsilon_0 = c_0\gamma_0 = 4\pi c_0\beta_0
    \big(1 + O(\beta_0^2)\big).
\]
Substituting into \eqref{eq:dpp-formula} at $\beta = \beta_0$, the pairing of the leading terms is $-\epsilon_0\langle \mathbf{e}_2, \mathbf{e}_2\rangle = -\epsilon_0$, and the cross terms contribute $O(\beta_0^2)$:
\[
    c'(\beta_0)\, d''(c(\beta_0))
    = -\epsilon_0 + O(\beta_0^2)
    = -4\pi c_0\beta_0 + O(\beta_0^2).
\]
Since $c'(\beta_0) = -\tfrac{c_0}{\beta_0}(1 + O(\beta_0^2))$, dividing by it gives
\[
    d''(c(\beta_0))
    = 4\pi\beta_0^2 + O(\beta_0^3) > 0 \qquad \textrm{for all } 0 < \beta_0 \ll 1.
\]
The mapping $\beta \mapsto d^{\prime\prime}(c(\beta))$ is smooth near $\beta_0$ and $d^{\prime\prime}(c(\beta_0)) > 0$, it follows that $d''(c(\beta)) > 0$ for all $|\beta - \beta_0| < \delta$ with $\delta$ sufficiently small.
\end{proof}

\subsection{Proof of orbital stability}
In this subsection, we verify Assumption~\ref{spectral assumptions}, which then allows us to apply the general stability results Theorem~\ref{abstract stability theorem} to the family of wave-borne point vortices discussed in the previous section. 

Recall that the traveling waves $\{U_c\}$ are critical points of the augmented Hamiltonian $E_c \colonequals E - cP$. Following the idea of Mielke~\cite{mielke2002energetic}, we observe that because $\varphi$ occurs quadratically in $E$, at a critical point, it can be eliminated in favor of $v \colonequals (\eta, \bfz) \in \Vspace_{1,3} \cap \nbhdO_{1,3}$, where
\[
    \Vspace_{1,3} \colonequals \Vspace_1 \times \Vspace_3,
    \qquad
    \nbhdO_{1,3} \colonequals \{(\eta, \bfz) \in \Xspace_1
    \times \Xspace_3 : \eta > -1, -1 < \bfz_2 < \eta(\bfz_1) < -\bfz_2\}.
\]
That is, because
\begin{equation}\label{eq:varphi-criticality}
    \langle D_\varphi E_c(u),\, \dot\varphi\rangle
    = \int_\R \dot\varphi\left(G(\eta)\varphi
    + \epsilon\nabla_\perp\Theta + c\eta'\right) dx_1,
\end{equation}
we set
\begin{equation}\label{eq:varphi-star}
    \varphi_*(v) \colonequals
    -G(\eta)^{-1}\big(c\eta' + \epsilon\nabla_\perp\Theta\big),
    \qquad
    u_*(v) \colonequals (\eta,\, \varphi_*(v),\, \bfz) \in \Vspace.
\end{equation}
The \emph{augmented potential} is then defined by \begin{equation}\label{eq:augmented-potential}
    \Vaug_c(v)\colonequals \min_{\varphi \in \Vspace_2} E_c(\eta, \varphi, \bfz) = E_c(u_*(v)), \qquad v \in \Vspace_{1,3} \cap \nbhdO_{1,3}.
\end{equation}
The minimum is attained precisely at $\varphi = \varphi_*(v)$, since $\varphi \mapsto E_c(\eta,\varphi,\bfz)$ is a strictly convex quadratic on $\Vspace_2$ with critical point $\varphi_*(v)$ by \eqref{eq:varphi-criticality}. Note that $\varphi_* \in C^\infty(\Vspace_{1,3} \cap \nbhdO_{1,3};\, \dot H^{3/2+}\cap \Xspace_2)$ and thus $u_* \in C^\infty(\Vspace_{1,3} \cap \nbhdO_{1,3};\, \Vspace)$. For later use, we also define the velocity traces
\begin{equation}\label{eq:a-b}
    \fraka = \fraka(v) \colonequals
    \big(\nabla(\mathscr{H}(\eta)\varphi_*)\big)\big|_\fluidS,
    \qquad
    \frakb = \frakb(v)
    \colonequals \fraka + \epsilon\nabla\Theta|_\fluidS - ce_1.
\end{equation}
Thus $\fraka$ is the irrotational part of the velocity field restricted to the surface, and $\frakb$ is the full relative velocity restricted to the surface. Note that $\frakb_2 = \eta'\,\frakb_1$: this is the kinematic boundary condition, and follows from \eqref{eq:varphi-star} since $u_*(v)$ is by construction a critical point of $E_c$ in $\varphi$.

The following three lemmas follow exactly the same computations as in Lemma~6.2, Lemma~6.3 and Lemma~6.5 of \cite{varholm2020stability}, and hence the proofs are omitted.

\begin{lemma}[Formula for $D^2\mathcal{V}_c^{\mathrm{aug}}$]\label{lem:D2Vaug}
For all $v \in \Vspace_{1,3} \cap \nbhdO_{1,3}$ and $\dot{v} = (\dot\eta, \dot{\bfz}) \in \Vspace_{1,3}$, we have
\begin{equation}\label{eq:D2Vaug}
    \jbracket{D^2\mathcal{V}_c^{\mathrm{aug}}(v)\dot{v},  \dot{v}
    }_{\Vspace_{1,3}^* \times \Vspace_{1,3}}
    = \jbracket{D_v^2 E_c(u_*(v))\dot{v},  \dot{v}
    }_{\Vspace_{1,3}^* \times \Vspace_{1,3}}
    - \jbracket{\mathscr{L}(v)\dot{v},
    G(\eta)^{-1}\mathscr{L}(v)\dot{v}
    }_{\Xspace_2^* \times \Xspace_2},
\end{equation}
where
\begin{equation}\label{eq:calL}
    \mathscr{L}(v)\dot{v}
    \colonequals G(\eta)(\mathfrak{a}_2\dot\eta)
    + (\mathfrak{b}_1\dot\eta)'
    + \epsilon\nabla_\perp\boldsymbol{\xi} \cdot \dot{\bfz}
\end{equation}
defines a bounded linear operator $\mathscr{L}(v) \in \Lin(\Xspace_{1,3};  \Xspace_2^*)$.
\end{lemma}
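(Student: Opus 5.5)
The plan is the standard Mielke-type reduction: express $\Vaug_c(v) = E_c(u_*(v))$ and differentiate twice with the chain rule. The key input is that $u_*(v)$ is a critical point of $E_c$ in the $\varphi$-direction, by \eqref{eq:varphi-criticality}.

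First, differentiate once. Since $D_\varphi E_c(u_*(v)) = 0$, we get $D\Vaug_c(v)\dot v = D_v E_c(u_*(v))\dot v$, so the $D\varphi_*$ contribution drops out.

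Next, differentiate again:
\[
D^2\Vaug_c(v)[\dot v,\dot v] = D_v^2E_c(u_*)[\dot v,\dot v] + D_\varphi D_v E_c(u_*)[D\varphi_*(v)\dot v,\dot v].
\]
Differentiating the identity $D_\varphi E_c(u_*(v)) = 0$ in $v$ gives
\[
D_\varphi^2E_c\, D\varphi_*\dot v = -D_vD_\varphi E_c\,\dot v .
\]
Here $D_\varphi^2 E_c = G(\eta)$, since $\varphi$ enters $E_c$ only through $\tfrac12\int\varphi G(\eta)\varphi$ and linear terms. Hence $D\varphi_*(v)\dot v = -G(\eta)^{-1} D_vD_\varphi E_c(u_*)\dot v$, and the cross term becomes
\[
-\jbracket{ D_vD_\varphi E_c\,\dot v,\; G(\eta)^{-1}D_vD_\varphi E_c\,\dot v}.
\]
This already has the form \eqref{eq:D2Vaug}, with $\mathscr{L}(v)\dot v \colonequals D_v\big(G(\eta)\varphi+\epsilon\nabla_\perp\Theta+c\eta'\big)\big|_{\varphi=\varphi_*}\dot v$.

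The main work is identifying $\mathscr{L}$ with \eqref{eq:calL}, and the obstacle is the shape derivative of the Dirichlet--Neumann operator. I will use Lannes' formula
\[
\langle D_\eta G(\eta)\varphi,\dot\eta\rangle = -G(\eta)(\fraka_2\dot\eta) - (\fraka_1\dot\eta)',
\]
where $\fraka = \nabla\mathscr{H}(\eta)\varphi|_{\fluidS}$. In finite depth with the flat bed, this holds as stated because the bed is fixed. The sign conventions need checking against the definition $\nabla_\perp = -\eta'\partial_{x_1}+\partial_{x_2}$; the overall sign of $\mathscr{L}$ is irrelevant since it enters quadratically.

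The remaining pieces are as follows:
\begin{itemize}
\item The $\eta$-variation of $\epsilon\nabla_\perp\Theta$ is $\epsilon\big(-\dot\eta'\Theta_{x_1}+\dot\eta(-\eta'\Theta_{x_1x_2}+\Theta_{x_2x_2})\big)$. Using harmonicity of $\Theta$ to write $\Theta_{x_2x_2} = -\Theta_{x_1x_1}$, this combines into $-(\epsilon\Theta_{x_1}|_{\fluidS}\,\dot\eta)'$.
\item The variation of $c\eta'$ gives $c\dot\eta'$.
\item The $\bfz$-variation uses $\nabla_{\bfz}\Theta = -\bfxi$, which yields $\epsilon\nabla_\perp\bfxi\cdot\dot\bfz$ up to sign.
\end{itemize}
Summing, the derivative terms combine into $-\big((\fraka_1+\epsilon\Theta_{x_1}-c)\dot\eta\big)' = -(\frakb_1\dot\eta)'$, using the definition \eqref{eq:a-b}. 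Flipping the overall sign gives \eqref{eq:calL}.

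Finally, boundedness of $\mathscr{L}(v)$ from $\Xspace_{1,3}$ to $\Xspace_2^*$ must be checked term by term:
\begin{itemize}
\item $G(\eta)\colon \Xspace_2 \to \Xspace_2^*$ is bounded, and $\fraka_2\dot\eta \in H^1\subset\Xspace_2$ because $\fraka_2 \in W^{1,\infty}$ for $v\in\Vspace$.
\item $(\frakb_1\dot\eta)'$ lies in $\Xspace_2^*$. This is where finite depth matters: $\frakb_1\to -c$ at infinity rather than decaying, but derivatives of $H^1$ functions lie in $\Xspace_2^*$ since $|\xi|^2/(|\xi|\tanh|\xi|)\lesssim\jbracket{\xi}^2$.
\item $\nabla_\perp\bfxi$ is smooth and decaying, with a derivative structure analogous to Remark~\ref{problem with B remark}, so it also lies in $\Xspace_2^*$.
\end{itemize}
These are exactly the computations of Lemma~6.2 in \cite{varholm2020stability}, with the infinite-depth multiplier replaced by the finite-depth one.
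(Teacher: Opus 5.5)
Your argument is correct and is essentially the computation the paper defers to (Lemma~6.2 of \cite{varholm2020stability}). You apply the chain rule to $E_c(u_*(v))$, using $D_\varphi E_c(u_*(v))=0$ and $D_\varphi^2E_c=G(\eta)$, and then use Lannes' shape-derivative formula to identify the mixed derivative as exactly $-\mathscr{L}(v)$; equivalently $D\varphi_*(v)\dot v=G(\eta)^{-1}\mathscr{L}(v)\dot v$, which matches the off-diagonal blocks in Lemma~\ref{lem:Hc}.

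One small correction to the boundedness step: for $v\in\Vspace_{1,3}$ you only know $\fraka\in H^{1/2+}$, not $W^{1,\infty}$. Instead argue by the product estimate that $\fraka_2\dot\eta$ and $\frakb_1\dot\eta$ lie in $H^{1/2}$, then use $\abs{\xi}\tanh\abs{\xi}\lesssim\jbracket{\xi}$ and $\abs{\xi}\coth\abs{\xi}\lesssim\jbracket{\xi}$ to land in $\Xspace_2$ and $\Xspace_2^*$, respectively.
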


\begin{lemma}[Extension of $D^2\mathcal{V}_c^{\mathrm{aug}}$]
\label{lem:D2Vaug-extension}
For all $v \in \Vspace_{1,3} \cap \nbhdO_{1,3}$, there
is a self-adjoint linear operator
$A(v) \in \Lin(\Xspace_{1,3};  \Xspace_{1,3}^*)$
such that
\[
    \jbracket{D^2\mathcal{V}_c^{\mathrm{aug}}(v)\dot{v},  \dot{w}
    }_{\Vspace_{1,3}^* \times \Vspace_{1,3}}
    = \jbracket{A(v)\dot{v},   \dot{w}
    }_{\Xspace_{1,3}^* \times \Xspace_{1,3}}
\]
for all $\dot{v}, \dot{w} \in \Vspace_{1,3}$. Explicitly,
\begin{equation}\label{eq:A-matrix}
    A = \begin{pmatrix} A_{11} & A_{13} \\ A_{13}^* & A_{33} \end{pmatrix},
\end{equation}
with entries given by
\begin{align}
    A_{11}\dot\eta
    &\colonequals  (g + \mathfrak{b}_2'\mathfrak{b}_1)\dot\eta
    - \left(\frac{\sigmas}{\jbracket{\eta'}^3}\dot\eta'\right)'
    - \mathscr{M}\dot\eta, \label{eq:A11}\\
    A_{13}\dot{\bfz}
    &\colonequals   \epsilon\mathfrak{b}_1\nabla_\top
    \big(G(\eta)^{-1}\nabla_\perp\boldsymbol{\xi}
    - \boldsymbol{\xi}\big)\cdot\dot{\bfz},
    \label{eq:A13}\\
    A_{13}^*\dot\eta
    &\colonequals   \epsilon\int_\R
    \dot\eta \,\mathfrak{b}_1\nabla_\top
    \big(G(\eta)^{-1}\nabla_\perp\boldsymbol{\xi}
    - \boldsymbol{\xi}\big)\, dx_1,
    \label{eq:A13adj}\\
    A_{33}
    &\colonequals  D_{\bfz}^2 E_c(u_*)
    -  \epsilon^2\int_\R
    \nabla_\perp\boldsymbol{\xi} \odot
    G(\eta)^{-1}\nabla_\perp\boldsymbol{\xi}\, dx_1.
    \label{eq:A33}
\end{align}
Here $\mathscr{M}\dot\eta \colonequals  -\mathfrak{b}_1 \big(G(\eta)^{-1}(\mathfrak{b}_1\dot\eta)'\big)'$ and $x \odot y \colonequals  (x \otimes y + y \otimes x)/2$ is the symmetric outer product.
\end{lemma}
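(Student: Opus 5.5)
We extract $A(v)$ from the second variation formula of Lemma~\ref{lem:D2Vaug}. The operator $A$ will be defined as the bounded extension of the quadratic form
\[
\jbracket{D_v^2 E_c(u_*(v))\dot v,\dot v} - \jbracket{\mathscr{L}(v)\dot v, G(\eta)^{-1}\mathscr{L}(v)\dot v}
\]
to $\Xspace_{1,3}$, written in block form with respect to the splitting $\dot v=(\dot\eta,\dot\bfz)$. Self-adjointness will then be automatic, because the form is symmetric and $G(\eta)^{-1}\colon\Xspace_2^*\to\Xspace_2$ is symmetric. The work is therefore in two places: computing the blocks explicitly, and checking that each block extends boundedly from $\Vspace_{1,3}$ to $\Xspace_{1,3}=H^1\times\R^2$.

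\emph{The $\dot\eta\dot\eta$ block.} First expand $D_\eta^2E_c(u_*)$ using the shape-derivative formula for the Dirichlet--Neumann operator,
\[
D_\eta G(\eta)[\dot\eta]\varphi = -G(\eta)(\fraka_2\dot\eta)-(\fraka_1\dot\eta)'.
\]
Combine it with the explicit $\eta$-dependence of $\nabla_\perp\Theta$, $\Theta|_{\fluidS}$, $\abs{\nabla\Theta}^2|_{\fluidS}$ and the momentum term $c\eta'(\varphi+\epsilon\Theta)$. As in \cite[Lemma~6.3]{varholm2020stability}, the terms containing $G(\eta)$ should cancel against the $G(\eta)(\fraka_2\dot\eta)$ part of $\mathscr{L}$ once we complete the square. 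What survives is:
\begin{itemize}
\item the local part $(g+\frakb_2'\frakb_1)\dot\eta$, where we use the kinematic identity $\frakb_2=\eta'\frakb_1$ to simplify;
\item the capillary term $-(\sigmas\jbracket{\eta'}^{-3}\dot\eta')'$ coming from the second variation of $V$;
\item the nonlocal term $-\mathscr{M}\dot\eta=\frakb_1\bigl(G(\eta)^{-1}(\frakb_1\dot\eta)'\bigr)'$, which comes from $(\frakb_1\dot\eta)'$ in $\mathscr{L}$.
\end{itemize}

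\emph{The mixed and $\dot\bfz\dot\bfz$ blocks.} The cross terms between $\epsilon\nabla_\perp\bfxi\cdot\dot\bfz$ and the other pieces of $\mathscr{L}$, together with $D_\eta D_\bfz E_c(u_*)$, give $A_{13}$. To reach the stated form we integrate by parts once and use $\nabla_\top\bfxi=(\bfxi|_\fluidS)'$. The squared term in $\dot\bfz$ gives the subtraction in $A_{33}$. Then $A_{13}^*$ is simply the adjoint of $A_{13}$, given by pairing against the same function.

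\emph{Boundedness on $\Xspace_{1,3}$.} For $v\in\Vspace_{1,3}$ the coefficients $\frakb_1,\frakb_2'$ lie in $H^{1/2+}\subset L^\infty$. With that, the terms are controlled as follows.
\begin{itemize}
\item The capillary term maps $H^1\to H^{-1}$.
\item For $\mathscr{M}$: $(\frakb_1\dot\eta)'\in L^2$. Since $\Xspace_2^*\supset H^{-1/2}\cap\{\text{derivatives}\}$, we get $G(\eta)^{-1}(\frakb_1\dot\eta)'\in\Xspace_2$. Its derivative lies in $H^{-1/2}$, and multiplication by $\frakb_1\in H^{1/2+}$ keeps it in $H^{-1}$.
\end{itemize}
Hence $\mathscr{M}\in\Lin(H^1;H^{-1})$, and $A_{11}\in\Lin(H^1;H^{-1})$ is symmetric. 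The functions $\nabla_\perp\bfxi$ are smooth and decaying, since the vortex is separated from $\fluidS$ on $\nbhdO$. Also $\nabla_\top\bfxi$ decays even though $\Xi_{x_2}|_\fluidS$ itself does not, cf.\ Remark~\ref{problem with B remark}. So $A_{13}\dot\bfz\in H^{-1}$, and $A_{33}$ is a finite symmetric matrix.

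\emph{Main obstacle.} The hard step is the bookkeeping in the $\dot\eta\dot\eta$ block in finite depth. Two issues make it harder than in deep water:
\begin{itemize}
\item $G(\eta)^{-1}$ must be interpreted on $\Xspace_2^*\to\Xspace_2$ rather than on homogeneous $\dot H^{\pm1/2}$.
\item $\Xi_{x_2}$ has nonzero limits at $\pm\infty$.
\end{itemize}
We handle the first by checking that the arguments of $G(\eta)^{-1}$ are derivatives of $L^2$ functions, or decaying smooth functions, and hence lie in $\Xspace_2^*$. We handle the second by always differentiating $\bfxi|_\fluidS$ before pairing. Otherwise the computation should transfer verbatim from \cite{varholm2020stability}, with $\abs{\p_{x_1}}$ replaced by $G(\eta)$.
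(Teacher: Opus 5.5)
Your proposal follows the paper's route: the paper omits this proof and points to the computation in \cite[Lemma~6.3]{varholm2020stability}, which is what you describe — start from Lemma~\ref{lem:D2Vaug}, cancel the $G(\eta)$-terms coming from the shape derivative against $G(\eta)(\fraka_2\dot\eta)$ in $\mathscr{L}$, read off the blocks, and extend the symmetric form by density. Some regularity claims in your boundedness check are off, though harmlessly, and should be corrected as follows:
\begin{itemize}
\item For $\eta\in H^{3/2+}$ you only have $\frakb_1\in -c+H^{1/2+}$, and $\frakb_2'=(\eta'\frakb_1)'\in H^{-1/2+}$ is not in $L^\infty$. Bound $\langle \frakb_2',\frakb_1\dot\eta\dot w\rangle$ by duality instead, using $\frakb_1\dot\eta\dot w\in H^{1/2+}$.
\item Since $\Xspace_2^*=\partial_{x_1}H^{1/2}(\R)$ with equivalent norms, decay alone does not place a function in $\Xspace_2^*$. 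Use $(\frakb_1\dot\eta)'\in\partial_{x_1}H^{1/2}$ rather than $L^2$.
\item Take $\nabla_\perp\bfxi\in\Xspace_2^*$ directly from the statement $\mathscr{L}(v)\in\Lin(\Xspace_{1,3};\Xspace_2^*)$ in Lemma~\ref{lem:D2Vaug}.
\end{itemize}
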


\begin{lemma}[Extension of $D^2 E_c$]\label{lem:Hc}
For all $v \in \Vspace_{1,3} \cap \nbhdO_{1,3}$, there is a self-adjoint operator
$H_c(v) \in \Lin(\Xspace;  \Xspace^*)$ such that
\begin{equation}\label{eq:Hc-extension}
    \jbracket{D^2 E_c(u_*(v))\dot{u},  \dot{w}
    }_{\Vspace^* \times \Vspace}
    = \jbracket{H_c(v)\dot{u},  \dot{w}
    }_{\Xspace^* \times \Xspace}
\end{equation}
for all $\dot{u}, \dot{w} \in \Vspace$. The operator is given by
\begin{equation}\label{eq:Hc}
    H_c(v)\dot{u}
    = \begin{pmatrix} \mathrm{Id}_{\Xspace_1^*} & 0 & 0 \\
    0 & 0 & \mathrm{Id}_{\Xspace_2^*} \\
    0 & \mathrm{Id}_{\R^2} & 0 \end{pmatrix}
    \begin{pmatrix} A(v) + \mathscr{L}(v)^* G(\eta)^{-1}\mathscr{L}(v)
    & -\mathscr{L}(v)^* \\
    -\mathscr{L}(v) & G(\eta) \end{pmatrix}
    \begin{bmatrix} \dot{v} \\ \dot\varphi \end{bmatrix},
\end{equation}
where $\mathscr{L}(v)^* \in \Lin(\Xspace_2;  \Xspace_{1,3}^*)$ is the adjoint of $\mathscr{L}(v)$, given by
\begin{equation}\label{eq:calL-adj}
    \mathscr{L}(v)^*\dot\varphi
    = \bigl(\mathfrak{a}_2 G(\eta)\dot\varphi
    - \mathfrak{b}_1\dot\varphi',
     \epsilon\jbracket{\nabla_\perp\boldsymbol{\xi},  \dot\varphi}\bigr).
\end{equation}
Moreover, we have
\begin{equation}\label{eq:Hc-quad}
    \jbracket{H_c\dot{u},  \dot{u}}_{\Xspace^* \times \Xspace}
    = \jbracket{A(v)\dot{v},  \dot{v}
    }_{\Xspace_{1,3}^* \times \Xspace_{1,3}}
    + \jbracket{G(\eta)(\dot\varphi - G(\eta)^{-1}\mathscr{L}(v)\dot{v}),
    \dot\varphi - G(\eta)^{-1}\mathscr{L}(v)\dot{v}
    }_{\Xspace_2^* \times \Xspace_2}
\end{equation}
for all $\dot{u} = (\dot{v}, \dot\varphi) \in \Xspace$.
\end{lemma}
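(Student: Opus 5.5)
The plan is a direct computation of the second variation of $E_c$ at $u_*(v)$, written block by block in the splitting $\dot u = (\dot v,\dot\varphi)$ with $\dot v = (\dot\eta,\dot{\bfz})$. The starting point is the observation used to define $\varphi_*$: $E_c$ is quadratic in $\varphi$, with $D_\varphi^2 E_c = G(\eta)$, independent of $\varphi$ and $\bfz$. Hence the Hessian at $u_*(v)$, viewed as a bilinear form on $\Vspace$, has the $2\times2$ block form
\[
\begin{pmatrix} D_v^2E_c & D_vD_\varphi E_c \\ D_\varphi D_v E_c & G(\eta)\end{pmatrix}.
\]
The first task is to identify the off-diagonal block with $-\mathscr{L}(v)$ from Lemma~\ref{lem:D2Vaug}. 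I would do this by differentiating the identity $D_\varphi E_c(u_*(v)) = 0$ in $v$. Using \eqref{eq:varphi-criticality}, this gives
\[
D_vD_\varphi E_c(u_*(v))\dot v = -G(\eta)\,D\varphi_*(v)\dot v .
\]
Next I differentiate $G(\eta)\varphi_* + c\eta' + \epsilon\nabla_\perp\Theta = 0$ with respect to $\eta$ and $\bfz$. Two inputs are needed here. The first is the classical shape-derivative formula $\langle D_\eta G(\eta)\varphi,\dot\eta\rangle = -G(\eta)(\mathfrak{a}_2\dot\eta) - (\mathfrak{a}_1\dot\eta)'$. The second is the derivatives of $\nabla_\perp\Theta$: in $\eta$ these produce $-(\epsilon\Theta_{x_1}\dot\eta)'$-type terms, and in $\bfz$ they produce $-\epsilon\nabla_\perp\bfxi\cdot\dot{\bfz}$, using $\nabla_{\bfz}\Theta = -\bfxi$. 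Combining these with $\mathfrak{b}_1 = \mathfrak{a}_1 + \epsilon\Theta_{x_1}|_\fluidS - c$ should give $G(\eta)D\varphi_*\dot v = \mathscr{L}(v)\dot v$, i.e.\ $D\varphi_* = G(\eta)^{-1}\mathscr{L}$. The off-diagonal block is therefore $-\mathscr{L}$, and its transpose is $-\mathscr{L}^*$. I would compute $\mathscr{L}^*$ explicitly by pairing with $\dot\varphi$ and integrating by parts, using the self-adjointness of $G(\eta)$, which yields \eqref{eq:calL-adj}.

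For the $(v,v)$ block I invoke Lemma~\ref{lem:D2Vaug}. It gives
\[
D_v^2E_c(u_*) = D^2\Vaug_c(v) + \mathscr{L}^*G(\eta)^{-1}\mathscr{L},
\]
and by Lemma~\ref{lem:D2Vaug-extension} this equals $A(v) + \mathscr{L}^*G^{-1}\mathscr{L}$ as a form. The permutation matrix in \eqref{eq:Hc} just reorders the components from $(v,\varphi)$ to the ordering $(\eta,\varphi,\bfz)$ of $\Xspace^*$. Completing the square, expanding
\[
\langle G(\dot\varphi - G^{-1}\mathscr{L}\dot v),\,\dot\varphi - G^{-1}\mathscr{L}\dot v\rangle
\]
and using the symmetry of $G$, gives \eqref{eq:Hc-quad} on $\Vspace$. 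Self-adjointness of $H_c$ then follows from that of $A(v)$ and $G(\eta)$ and from the form of the off-diagonal blocks.

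The main obstacle is extending everything from $\Vspace$ to $\Xspace$. Each block must be bounded in the right topology:
\begin{itemize}
\item $G(\eta)\colon\Xspace_2\to\Xspace_2^*$;
\item $G(\eta)^{-1}\colon\Xspace_2^*\to\Xspace_2$;
\item $\mathscr{L}(v)\in\Lin(\Xspace_{1,3};\Xspace_2^*)$, which is asserted in Lemma~\ref{lem:D2Vaug};
\item $A(v)\in\Lin(\Xspace_{1,3};\Xspace_{1,3}^*)$, from Lemma~\ref{lem:D2Vaug-extension}.
\end{itemize}
The finite-depth subtlety is the first item. I would prove that $\langle G(\eta)\varphi,\varphi\rangle = \int_{\fluidD}|\nabla\mathscr{H}(\eta)\varphi|^2$ is comparable to $\|\varphi\|_{\Xspace_2}^2$ for $\eta\in\Vspace_1$ with $\eta>-1$. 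The route is to flatten the domain to the strip and compare with $G(0) = |\partial_{x_1}|\tanh|\partial_{x_1}|$ through energy estimates. Lax--Milgram then gives the invertibility of $G(\eta)$ between $\Xspace_2$ and $\Xspace_2^*$, and density of $\Vspace$ in $\Xspace$ gives the unique extension.
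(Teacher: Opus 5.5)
Your proposal is correct and follows essentially the argument the paper defers to \cite[Lemma~6.5]{varholm2020stability}. The steps are the same: $E_c$ is quadratic in $\varphi$ with $D_\varphi^2E_c = G(\eta)$; the mixed block is $-\mathscr{L}(v)$, obtained by differentiating the criticality condition that defines $\varphi_*$ (your computation via the shape-derivative formula and $\nabla_{\bfz}\Theta=-\bfxi$ checks out); the $(v,v)$ block is recovered from Lemmas~\ref{lem:D2Vaug} and~\ref{lem:D2Vaug-extension}; and \eqref{eq:Hc-quad} is completing the square. Your Lax--Milgram argument that $G(\eta)\colon\Xspace_2\to\Xspace_2^*$ is an isomorphism, comparing $\int_{\fluidD}|\nabla\mathscr{H}(\eta)\varphi|^2$ with $\|\varphi\|_{\Xspace_2}^2$, supplies a finite-depth detail the paper uses without proof, already when defining $\varphi_*$.
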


We are now prepared to prove the main theorem.

\begin{proof}[Proof of Theorem~\ref{intro stability theorem}]
We have already confirmed that the wave-borne point vortex problem can be written as a Hamiltonian system that satisfies Assumptions~\ref{abstract interpolation assumption}--\ref{bound state assumption}. Moreover, in Theorem~\ref{thm:dpp}, we confirmed that $d^{\prime\prime}(c(\beta)) > 0$ when $0 < \beta_0 \ll 1$ and $|\beta-\beta_0| \ll 1$. Therefore, Theorem~\ref{intro stability theorem} follows directly from the abstract results Theorem~\ref{abstract stability theorem} as soon as we verify that Assumption~\ref{spectral assumptions} holds for such $\beta_0$ and $\beta$.

With that in mind, let $\beta_0$ and $\delta$ be as in Theorem~\ref{thm:dpp}, let $|\beta - \beta_0| < \delta$, and write
\[
	c \colonequals c(\beta), \qquad v_\beta \colonequals (\eta^{\beta,c}, b^{\beta,c}\mathbf{e}_2), \qquad H_c \colonequals H_{c}(v_\beta),
\]
we claim that the spectrum of $H_c = H_{c(\beta)}(v_\beta)$ has the form
\[
    \spectrum(I^{-1}H_c)
    = \{-\mu_c^2, ~0\} \cup \Sigma_c,
\]
where $-\mu_c^2 < 0$ is a simple eigenvalue with $\mu_c^2 \approx 4\pi c_0^2$, $0$ is a simple eigenvalue with eigenvector $T'(0)U_{c(\beta)}$, and $\Sigma_c \subset (0,\infty)$ is bounded away from $0$.

From~\eqref{eq:Hc}, we see that $I^{-1} H_c$ is as a relatively compact perturbation of the block operator $I^{-1} H_0$, where
\begin{equation}\label{eq:H0}
 H_0 \colonequals   \begin{pmatrix}
    1 - \sigmas\p_{x_1}^2 & -c\p_{x_1} & 0 \\
    c\p_{x_1} & \abs{\p_{x_1}}\tanh{\abs{\p_{x_1}}} & 0 \\
    0 & 0 & -4\pi c_0^2\, \mathbf{e}_2 \otimes \mathbf{e}_2 \end{pmatrix} \in \Lin(\Xspace, \Xspace^*).
\end{equation}
The upper-left block corresponds to the Hessian of $E_c$ with respect to $(\eta, \varphi)$ at the trivial solution with $\epsilon = 0$. The lower-right block is likewise is the leading-order part of $D_\bfz^2 E_c(U_c)$, which does not vanish as $\beta \to 0$, contributes the simple eigenvalues $0$, with eigenvector $(0,0,\mathbf{e}_1)$, and $-4\pi c_0^2$, with eigenvector $(0,0,\mathbf{e}_2)$. The $(\eta,\varphi)$-block is a Fourier multiplier. Therefore, to determine the spectrum of $I^{-1} H_0$, we need only examine the generalized eigenvalue problem on the Fourier side:
\[
     \begin{pmatrix} 1+\sigmas \xi^2 & ic\xi  \\ -ic\xi & \abs{\xi}\tanh{\abs{\xi}} 
    \end{pmatrix}
    \begin{pmatrix}\hat{\dot\eta}\\ \hat{\dot\varphi} \end{pmatrix}
    = \lambda
    \begin{pmatrix} 1+\xi^2 & 0  \\ 0 & \abs{\xi}\tanh{\abs{\xi}} 
    \end{pmatrix}
    \begin{pmatrix}\hat{\dot\eta}\\ \hat{\dot\varphi} \end{pmatrix}.
\]
This has a two-dimensional null space for any $\xi \in \mathbb{R}$. There are also a pair of nonzero eigenvalues $\lambda_1(\xi), \lambda_2(\xi)$ that satisfy
\[
        \lambda_1 + \lambda_2 = \frac{1+\sigmas \xi^2}{1+\xi^2} + 1 > 1,
    \qquad
    \lambda_1\lambda_2
    = \frac{1 + \sigmas \xi^2 - c^2\abs{\xi}\coth{\abs{\xi}}}{1+\xi^2},
\]
using $\abs{\xi}\coth{\abs{\xi}} \le 1 + \tfrac{\xi^2}{3}$,
\[
    \lambda_1\lambda_2 \ \ge\ \min\Big\{1-c^2,\ \sigmas-\frac{c^2}{3}\Big\}
    \ >\ 0
    \qquad\text{for } F^2 = c^2 < 1,\
    \Bs = \frac{\sigmas}{c^2} > \frac13.
\]
Thus, eigenvalues $\lambda_1(\xi)$ and $\lambda_2(\xi)$ are positive and bounded away from $0$ uniformly in $\xi$, meaning that the $(\eta,\varphi)$-block of $H_0$ is positive definite relative to $I$ with a lower bound $\lambda_* = \lambda_*(\Bs, F) > 0$. In summary,
\begin{equation}\label{eq:spec-H0}
	\spectrum(I^{-1}H_0) = \{-4\pi c_0^2, ~ 0\} \cup \Sigma_0, \qquad \Sigma_0 \subset [\lambda_*, \infty),
\end{equation}
with $-4\pi c_0^2$ and $0$ simple eigenvalues.

Now, because it is a relatively compact perturbation of $I^{-1} H_0$, for $0 < \beta_0 \ll 1$ and $\abs{\beta-\beta_0} \ll 1$, the spectrum of $I^{-1}H_c$ consists of a subset $\Sigma_c \subset [\lambda_*/2, \infty)$ along with two simple eigenvalues $\mu_0(\beta)$ and $\mu_1(\beta)$ that bifurcate from the $-4\pi c_0^2$ and $0$, respectively. But, we know that $0$ is an eigenvalue of $I^{-1} H_c$ with associated eigenvector $T^\prime(0) U_c$ by translation invariance, hence $\mu_1(\beta) = 0$ for all such $\beta$. Likewise, by continuity, for $|\beta-\beta_0| \ll 1$, we must have that $\mu_0(\beta) < 0$. This completes the proof of Assumption~\ref{spectral assumptions}, and hence the orbital stability of $U_c$.
\end{proof}

\section*{Acknowledgments}

The research of SW is supported in part by the NSF through DMS-2306243, and the Simons Foundation through award 960210.

\appendix

\bibliographystyle{siam}
\bibliography{projectdescription}

\end{document}